\documentclass[10pt,leqno]{amsart}

\usepackage[T1]{fontenc}
\usepackage[utf8]{inputenc}
\usepackage{lmodern}
\usepackage{amsmath,amssymb,amsthm,mathtools}
\usepackage{mathrsfs}
\usepackage{enumitem}
\usepackage[a4paper,hmargin=1.5in,vmargin=1in]{geometry}
\usepackage{graphicx}
\usepackage{tikz}
\usepackage{pgfplots}
\pgfplotsset{compat=1.18}
\usepgfplotslibrary{groupplots}
\usepackage{microtype}
\usepackage{cite}
\usepackage{xcolor}
\usepackage[colorlinks=true,linkcolor=blue,citecolor=blue,
urlcolor=blue,bookmarksnumbered=true]{hyperref}

\numberwithin{equation}{section}
\setlist[enumerate]{leftmargin=*,label=(\roman*),itemsep=2pt,topsep=4pt}
\allowdisplaybreaks[2]

\newcommand{\R}{\mathbb R}
\newcommand{\Sph}{\mathbb S}
\newcommand{\calC}{\mathcal C}
\newcommand{\Ga}{\Gamma}
\newcommand{\dd}{\,d}
\newcommand{\ip}[2]{\langle #1,#2\rangle}
\newcommand{\binomc}{\binom{n-1}{r-1}}

\newtheorem{maintheorem}{Theorem}

\theoremstyle{plain}
\newtheorem{theorem}{Theorem}[section]
\newtheorem{proposition}[theorem]{Proposition}
\newtheorem{lemma}[theorem]{Lemma}
\newtheorem{corollary}[theorem]{Corollary}
\theoremstyle{definition}

\newtheorem{question}[theorem]{Open question}
\theoremstyle{definition}
\newtheorem{remark}[theorem]{Remark}

\title[Conical ends of $S_r$-shrinkers]
{Asymptotically conical shrinking ends of the $r$-mean curvature flow: rotational construction and uniqueness}

\author{Hil\'ario Alencar}
\address{Instituto de Matem\'atica, Universidade Federal de Alagoas, Macei\'o, 57072-900, Brazil}
\email{hilario@mat.ufal.br}
\author{M\'arcio Batista}
\address{CPMAT -- Instituto de Matem\'atica, Universidade Federal de Alagoas, Macei\'o, 57072-900, Brazil}
\email{mhbs@mat.ufal.br}
\author{Greg\'orio Silva Neto}
\address{Instituto de Matem\'atica, Universidade Federal de Alagoas, Macei\'o, 57072-900, Brazil}
\email{gregorio@im.ufal.br}

\subjclass[2020]{Primary 53E10; Secondary 53C21, 53C42, 34B40}

\keywords{$r$-mean curvature flow, self-shrinker, asymptotically conical end,
uniqueness at infinity, rotational hypersurface, G\aa rding cone}

\begin{document}

\begin{abstract}
For $1\le r\le n-1$ we study shrinking self-similar ends of the
$r$-mean curvature flow, $S_r=c\langle X,N\rangle$ with $c<0$, that are
asymptotic to a cone. For every rotational cone of positive slope we
construct a smooth embedded rotational end and prove that its profile is
the only positive solution of the profile equation with the prescribed
slope at infinity. Its angular defect $xu'-u$ is strictly negative, which
yields $r$-admissibility, and the profile has a complete asymptotic
expansion for $u(x)/x$ in powers of $x^{-(r+1)}$, valid after arbitrary
differentiation. The profiles depend real analytically on the slope and, on sufficiently
distant exterior regions, foliate the image of the corresponding family. Our main result removes the symmetry
assumption: two ends that converge smoothly, with multiplicity one, to the
same circular cone coincide outside a compact set. The proof transforms
the equation for their difference into an almost-eigenfunction inequality
for a drift Laplacian on an auxiliary conical metric and applies
Bernstein's asymptotic analysis on weakly conical ends. The argument
extends to strictly $r$-admissible cones whose links have constant
principal curvatures.
\end{abstract}

\maketitle

\section{Introduction}

Let $X_0:\Sigma^n\to\R^{n+1}$ be an immersed hypersurface with principal
curvatures $\lambda_1,\ldots,\lambda_n$. For $1\le r\le n$, let
\[
S_r(\lambda)=\sum_{i_1<\cdots<i_r}\lambda_{i_1}\cdots\lambda_{i_r}
\]
be the $r$th elementary symmetric function. The $r$-mean curvature flow is
the curvature evolution
\begin{equation}\label{eq:flow}
\left(\frac{\partial X}{\partial t}\right)^\perp=S_rN.
\end{equation}
For $r=1$ this is the mean curvature flow. For $r>1$ the flow is fully
nonlinear and is parabolic on the G\aa rding cone
\[
\Ga_r=\{\lambda\in\R^n:S_1(\lambda)>0,\ldots,S_r(\lambda)>0\}.
\]
The study of contracting convex hypersurfaces by homogeneous curvature
functions goes back to Andrews
\cite{Andrews1994,Andrews2000,Andrews2007}; see also
\cite{AndrewsGuanNi2016,AndrewsMcCoy2012,BrendleChoiDaskalopoulos2017,McCoy2011,McCoy2021}
for powers of curvature functions, pinching, convergence, and self-similar
solutions. The restriction to an ellipticity cone is intrinsic to this
theory: outside it the linearization of the speed need not be positive.

If $X(\cdot,t)=\phi(t)X_0$ is a homothetic solution of
\eqref{eq:flow}, then $X_0$ satisfies
\begin{equation}\label{eq:shrinker}
S_r=c\ip{X}{N}
\end{equation}
for a constant $c$. Conversely, an $r$-admissible solution of
\eqref{eq:shrinker} generates the homothetic solution with
\[
\phi(t)=\bigl(1+(r+1)ct\bigr)^{1/(r+1)}.
\]
Thus $c<0$ corresponds to a shrinker with extinction time
$-1/((r+1)c)$. Under $X\mapsto\rho X$ the coefficient changes according
to $c\mapsto\rho^{-(r+1)}c$.

For $r=1$, self-shrinkers model Type-I singularities of the mean curvature
flow; see Huisken \cite{Huisken1984,Huisken1990} and Colding--Minicozzi
\cite{ColdingMinicozzi2012}. Rotational examples have played a prominent
role since Angenent's shrinking torus \cite{Angenent1992}. Kleene and
M\o ller \cite[Theorem~1, p.~3943]{KleeneMoller2014} constructed a one-parameter family of
properly embedded rotational self-shrinking ends asymptotic to rotational
cones, usually called trumpets, and classified the noncompact rotational
self-shrinking ends \cite[Theorem~3, p.~3947]{KleeneMoller2014}. Wang \cite[Theorem~1.3, p.~614]{Wang2014} proved that a smooth
properly embedded self-shrinking end asymptotic to a regular cone is
unique, without any symmetry assumption, and Bernstein
\cite{Bernstein2020} later gave an elliptic proof of Wang's theorem,
based on the asymptotic structure of almost eigenfunctions of drift
Laplacians on weakly conical ends. Khan
\cite[Theorem~1.1, p.~56]{Khan2023} extended this uniqueness to properly
immersed self-shrinkers of higher codimension, also via Bernstein's
method. Conical self-shrinkers also enter the
gluing constructions of Kapouleas--Kleene--M\o ller
\cite{KapouleasKleeneMoller2018} and the analysis of asymptotically
conical tangent flows by Chodosh--Schulze \cite{ChodoshSchulze2021}.

For fully nonlinear speeds, most of the noncompact theory concerns convex
solutions, Gauss curvature flows, or translating solitons. For positive
powers $K^\alpha$ of the Gauss curvature, Urbas showed that every complete
noncompact convex contracting self-similar solution is a hyperplane
\cite[Theorem~2.1, p.~255]{Urbas1998}, and that convex expanding
solutions exist inside, and are asymptotic to the boundary of, every
convex cone containing no line, possibly in a generalized sense
\cite[Theorem~1, p.~252]{Urbas1998};
rotational translators for
general extrinsic flows were studied by Rengaswami \cite{Rengaswami2021}.
Gao, Li and Ma proved that, for $1\le k\le n-1$ and $\alpha\ge1/k$,
strictly convex closed self-similar solutions of the
$\sigma_k^\alpha$-flow are round spheres
\cite[Theorem~1.4, p.~2]{GaoLiMa2018}.
Guo constructed rotational conical shrinkers for elliptic symmetric speeds
that are homogeneous of degree one \cite[Theorem~1.1, p.~7354]{Guo2017}, and proved their
uniqueness under additional hypotheses on the speed and on the asymptotic
cone \cite[Theorem~2.1, p.~5]{Guo2018}. These results do not cover \eqref{eq:shrinker} with
speed $S_r$ when $r>1$: $S_r$ has degree $r$, and replacing it by
$S_r^{1/r}$ changes both the elliptic equation and its scaling, even on
the same admissibility cone. Rigidity and gap phenomena for
$S_r$-shrinkers were studied in
\cite{AlencarBatistaSilvaNeto2023,BatistaXavier2024,AlencarBessaSilvaNeto2025},
and translating solitons for higher-order mean curvature flows satisfy a
different soliton equation \cite{LiraFarias2026}. The present paper
addresses the complementary problems of existence and uniqueness of
asymptotically conical $S_r$-shrinking ends.

\subsection*{Uniqueness at infinity}
Our main result is a uniqueness theorem at infinity, without any symmetry
assumption. Let $C$ be a regular cone over a smooth closed hypersurface
$\Gamma\subset\Sph^n$, with unit normal $N_C$. We say that an end is
\emph{smoothly asymptotic to $C$ with multiplicity one} if, outside a
compact set, it is the normal graph
\begin{equation}\label{eq:nr-graph-intro}
X_f(\rho,\theta)=\rho\theta+f(\rho,\theta)N_C(\theta),\qquad
|\nabla_C^jf|=o(\rho^{1-j})\quad(j\ge0),
\end{equation}
of a function $f$ over the part $\{\rho\ge\rho_0\}$ of $C$; see
Section~\ref{sec:nonrotational}. This is the hypothesis of Wang
\cite{Wang2014}: the rescalings $tX$ converge to $C$ in
$C^\infty_{\mathrm{loc}}$ as $t\downarrow0$. Let $O(n)$ act on
$\R^{n+1}=\R\times\R^n$ on the second factor and, for $\sigma>0$, let
\[
\calC_\sigma=\{(x,y\omega):x>0,\ y=\sigma x,\ \omega\in\Sph^{n-1}\}
\]
be the rotational cone of slope $\sigma$.

\begin{maintheorem}\label{thm:nr-circular}
Let $1\leq r\leq n-1$, $c<0$, and $\sigma>0$. Suppose that two
smooth ends satisfy \eqref{eq:shrinker} and are smoothly asymptotic,
with multiplicity one, to $\calC_\sigma$, with the orientation converging
to
\(
 N_C=(\sigma,-\omega)/\sqrt{1+\sigma^2}.
\)
Then the two ends coincide outside a compact set.
\end{maintheorem}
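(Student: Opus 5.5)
Following the strategy announced in the abstract, I would adapt Bernstein's elliptic proof of Wang's theorem to the fully nonlinear operator $S_r$. Write both ends as normal graphs $X_{f_1},X_{f_2}$ over $\calC_\sigma\cap\{\rho\ge\rho_0\}$ as in \eqref{eq:nr-graph-intro}, and set $w=f_1-f_2$, so that $|\nabla_C^jw|=o(\rho^{1-j})$.

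\emph{Linearizing the difference.} Both $f_i$ satisfy $F[f]:=S_r(X_f)-c\ip{X_f}{N_f}=0$. Writing $F[f_1]-F[f_2]=\int_0^1\frac{d}{ds}F[sf_1+(1-s)f_2]\,ds$ gives a linear equation
\[
a^{ij}\nabla_i\nabla_jw+b^i\nabla_iw+e\,w=0,
\]
where $a^{ij}=\int_0^1\partial S_r/\partial h_{ij}$ is evaluated along the interpolating graphs, and the drift comes from $-c\ip{X}{N}$. The cone $\calC_\sigma$ has principal curvatures $\kappa/\rho$ with multiplicity $n-1$ and $0$ in the radial direction, with $\kappa>0$ for the orientation $N_C$. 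The $S_r$-term of the equation therefore scales like $\rho^{-r}$. Along the ends, $\partial S_r/\partial h_{ij}=T_{r-1}$, the Newton tensor, and $T_{r-1}$ is asymptotic to $\rho^{-(r-1)}$ times a fixed positive definite tensor $A_0$. In the radial direction $A_0$ equals $\binomc\kappa^{r-1}$, and in the angular directions it equals $\binom{n-2}{r-1}\kappa^{r-1}$. Both are positive because $r\le n-1$. Hence $a^{ij}=\rho^{1-r}(A_0+o(1))$ is uniformly elliptic after rescaling. The drift term is $c\,\ip{X}{\nabla w}$ up to lower order, i.e. $c\rho\,\partial_\rho w$, and the zeroth-order term is $e=O(\rho^{-r-1})$ or smaller, using $\ip{X_C}{N_C}=0$. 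Multiplying through by $\rho^{r-1}$ and letting $g$ be the conical metric whose inverse is $A_0$, rescaled by $\rho^{r-1}$, I obtain
\[
\Delta_g w-\tfrac{|c|}{\binomc\kappa^{r-1}}\,\rho^{r}\,\partial_\rho w=E,\qquad |E|\le o(1)\bigl(\rho^{-1}|\nabla_g^2 w|\ldots\bigr).
\]
This is a drift Laplacian $\Delta_g-\tfrac12 \nabla\phi\cdot\nabla$ with $\phi\sim\alpha\rho^{r+1}$. A change of radial variable $s=\rho^{(r+1)/2}$ turns it into the standard $\tfrac{s}{2}\partial_s$ form on a weakly conical end, with a metric of the form $ds^2+\beta s^2 g_\Gamma$. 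The error terms lie in Bernstein's admissible class, meaning $|L w|\le M s^{-2}(|w|+s|\nabla w|)$ or similar, once one uses the decay $|\nabla^jw|=o(\rho^{1-j})$ together with interior Schauder estimates on unit scales.

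\emph{Applying Bernstein's analysis.} Bernstein shows that such an almost-eigenfunction which, together with its derivatives, has at most polynomial growth, and which vanishes to infinite order at infinity in the sense of the trace at the cone, must vanish identically on a neighborhood of infinity. The alternative, by his Carleman/frequency estimates, is that it has a nonzero polynomial-type leading term. For the trace, recall that both ends converge to the same cone with $|f_i|=o(\rho)$. The equations force $f_i=O(\rho^{-r})$, the analogue of the asymptotic expansion in powers of $x^{-(r+1)}$ proved in the rotational case, with leading coefficient determined by the cone alone. This comes from integrating the ODE along rays, since the drift dominates. Iterating shows that $w$ decays faster than any power of $\rho$. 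Bernstein's unique continuation at infinity then yields $w\equiv0$ for $\rho$ large, and hence the ends coincide outside a compact set.

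\emph{Main obstacle.} I expect the hardest step to be verifying that the nonlinear error terms fit Bernstein's hypotheses after the change of variables $s=\rho^{(r+1)/2}$. The metric $g$ must be genuinely weakly conical, and the coefficients $a^{ij}$ differ from $\rho^{1-r}A_0$ by an amount whose decay comes only from $f_i=o(\rho)$ and $\nabla f_i=o(1)$. I would first upgrade these to quantitative rates such as $|\nabla^jf_i|=O(\rho^{-r-j})$, using the drift ODE along rays. With those rates the perturbations become $O(s^{-2})$ relative to the model, which is what Bernstein requires.
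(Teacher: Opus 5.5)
Your outline follows the same route as the paper: normal graphs over the cone, the linear equation for $w=f_1-f_2$, flatness of $w$ by integrating $\gamma(\rho f_\rho-f)=W_fS_r(A_f)$ along rays, the substitution $s=\rho^{(r+1)/2}$, and Bernstein's theorem with vanishing asymptotic trace. However, you resolve the step you call the main obstacle incorrectly, and this is exactly where a further idea is needed. Bernstein's admissible error is $M\mathcal R^{-2}(|u|+|\nabla u|)$, not $Ms^{-2}(|w|+s|\nabla w|)$. A weakly conical end also needs $\bigl||\nabla\mathcal R|-1\bigr|\le\Lambda\mathcal R^{-4}$.

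The rates $|\nabla^jf_i|=O(\rho^{-r-j})$ only make the coefficients differ from the cone's by a relative $O(\rho^{-r-1})=O(s^{-2})$. Normalize the radial principal coefficient to $1$: this perturbation then multiplies the principal drift $-\eta s\partial_s$ and produces an $O(s^{-1})\,\partial_s w$ term, one order too large. If you do not normalize, you get $|\nabla_g s|-1=O(s^{-2})$ instead. The term is genuinely present. At the common leading term $F=-\Theta\rho^{-r}$ the radial principal coefficient is $a_R\rho^{1-r}(1+\zeta\rho^{-r-1}+\cdots)$, with $\zeta=(r-1)\Theta(r/k-k)$. This is generally nonzero for $r\ge2$, so no sharpening of decay rates turns it into an error term.

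The paper's fix has four parts.
\begin{enumerate}
\item It uses the second-order information $f_i=F+O_\infty(\rho^{-2r-1})$ from Lemma~\ref{lem:nr-decay}: both ends share the same radial leading term.
\item It computes the linearization at $F$ exactly. The $s^{-2}$ correction is then radial with constant coefficient, while $f_t-F$ contributes only relative $O(s^{-4})$ terms, i.e.\ $O(\mathcal R^{-3})|\nabla w|$ errors.
\item It divides by the radial principal coefficient, so that $g^{ss}=1$ exactly.
\item It keeps the resulting drift $\frac{\ell}{s}\partial_s$ as a principal term and removes it by the conjugation $v=\mathcal R^{\ell/2}w$, which shifts the eigenvalue to $\varpi_*=\frac1{r+1}+\frac\ell4$.
\end{enumerate}
Without this comparison with a radial model, the $s^{-1}$ drift could depend on $\theta$ and could not be conjugated away. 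That is precisely the obstruction for general links (Remark~\ref{rem:nr-general-cones}).

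There is also a smaller error: the zeroth-order term is not $O(\rho^{-r-1})$. Linearizing $\ip{X_f}{N_f}=(f-\rho f_\rho)/W_f$ gives $\gamma(w-\rho w_\rho)$, so there is an $O(1)$ term $\gamma w$. In the $s$-variable it becomes the eigenvalue $\varpi_0=\frac1{r+1}$ (Bernstein's $\frac12$ when $r=1$). This is harmless for the argument, since Bernstein's theorem allows any $\varpi$. In your normalized equation, though, it sits inside $E$, where it is not small.
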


No admissibility assumption is made on the ends in
Theorem~\ref{thm:nr-circular}: the principal curvatures of $\calC_\sigma$
lie in the interior of $\Ga_r$ because $r\le n-1$, so smooth convergence
to the cone makes both ends strictly $r$-admissible outside a compact set.

The proof has two steps. First, the equation forces the difference of two
such ends to vanish at infinity faster than every power of the distance
(Lemma~\ref{lem:nr-decay}); flatness alone, however, does not imply
equality. Second, a nonlinear radial change of variables, dictated by the
degree-$r$ homogeneity of $S_r$, together with the Newton tensor of the
cone, produces an auxiliary conical metric on which the difference
satisfies an almost-eigenfunction inequality for a drift Laplacian. The
asymptotic analysis of Bernstein \cite[Theorem~1.2, p.~1898]{Bernstein2020} then
shows that a solution of such an inequality with vanishing asymptotic
trace vanishes identically outside a compact set. For $r=1$ the
construction reduces to Bernstein's proof of Wang's theorem; for $r>1$
the effective metric and the radial drift must be normalized together.
Proposition~\ref{prop:nr-constant-curvatures} extends the argument to
strictly $r$-admissible cones whose links have constant principal
curvatures, including noncircular examples.

The elliptic argument settles the circular case and the case of links
with constant principal curvatures; uniqueness over general admissible
links remains open (Question~\ref{ques:general-cones}).

\subsection*{The rotational profile}
Theorem~\ref{thm:nr-circular} is a uniqueness statement. We now construct
the ends to which it applies, and describe them in detail. For a
positive function $u$ of $x$ we consider the rotational graph
\[
X(x,\omega)=(x,u(x)\omega),\qquad \omega\in\Sph^{n-1},
\]
oriented by $N=(u',-\omega)/\sqrt{1+(u')^2}$. Put
\[
\Psi=xu'-u,\qquad m=u\sqrt{1+(u')^2},\qquad
\kappa=\frac{-c}{\binomc}>0.
\]
In Section~\ref{sec:rotational} we show that \eqref{eq:shrinker} becomes
the profile equation
\begin{equation}\label{eq:profile-intro}
u''=\left(\frac{n-r}{ru}+\kappa m^{r-1}\Psi\right)
       \bigl(1+(u')^2\bigr).
\end{equation}
The quantity $\Psi$ is the angular defect of the profile: $(u/x)'=\Psi/x^2$,
and $\ip{X}{N}=\Psi/\sqrt{1+(u')^2}$.

\begin{maintheorem}\label{thm:main}
Let $1\le r\le n-1$, $c<0$, and $\sigma>0$. There exists a smooth function
$u_\sigma:(0,\infty)\to(0,\infty)$ solving \eqref{eq:profile-intro} such
that $u_\sigma(x)/x\to\sigma$ as $x\to\infty$, and it is unique in the
following strong sense: every positive solution of
\eqref{eq:profile-intro} on a half-line $(d,\infty)$ with $u(x)/x\to\sigma$
coincides with $u_\sigma$ on the common interval of definition. Its
rotational graph $\Sigma_\sigma$ is a smooth embedded $r$-admissible
solution of \eqref{eq:shrinker}, asymptotic to $\calC_\sigma$, and for
every $x>0$,
\begin{align}
0&< u_\sigma(x)-\sigma x
 \le \frac{n-r}{2r\kappa\sigma^r}\,x^{-r},\label{eq:main0}\\
|u_\sigma'(x)-\sigma|
&\le \frac{3(n-r)}{2r\kappa\sigma^r}\,x^{-(r+1)},\label{eq:main1}\\
0&>xu_\sigma'(x)-u_\sigma(x)
 \ge-\frac{n-r}{r\kappa\sigma^r}\,x^{-r}.\label{eq:mainPsi}
\end{align}
In particular, $u_\sigma(x)/x$ decreases strictly to $\sigma$ and
$|A_{\Sigma_\sigma}|=O(|X|^{-1})$ as $x\to\infty$. For every $b>0$ the
tail $\Sigma_\sigma\cap\{x\ge b\}$ is proper and complete in the direction
$x\to\infty$.
\end{maintheorem}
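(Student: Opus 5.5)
We propose to prove Theorem~\ref{thm:main} by a shooting/fixed-point argument at infinity for the profile equation \eqref{eq:profile-intro}, followed by a continuation argument down to $x\downarrow0$. The natural variables are $v=u/x$ and $\Psi=xu'-u$. Since $v'=\Psi/x^2$ and $\Psi'=xu''$, equation \eqref{eq:profile-intro} becomes the first-order system
\[
v'=\frac{\Psi}{x^2},\qquad
\Psi'=x\Bigl(\frac{n-r}{rxv}+\kappa m^{r-1}\Psi\Bigr)\bigl(1+(u')^2\bigr),
\]
where $m\approx \sigma x\sqrt{1+\sigma^2}$ when $v\approx\sigma$. The $\Psi$-equation then has the form $\Psi'=a(x)\Psi+b(x)$. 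Here the coefficient $a\sim\kappa(1+\sigma^2)^{(r+1)/2}\sigma^{r-1}x^{r}$ is large and positive, and the source $b\sim(n-r)(1+\sigma^2)/(r\sigma)>0$. Because $a\to+\infty$, the homogeneous solutions grow like $\exp(cx^{r+1})$. The only solution with $\Psi$ bounded is therefore obtained by integrating from infinity:
\[
\Psi(x)=-\int_x^\infty e^{-\int_x^s a}\,b(s)\,ds<0,\qquad |\Psi|\approx b/a=O(x^{-r}).
\]

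Key steps, in order. (1) Set $v=\sigma+\int_x^\infty(-\Psi)/s^2\,ds$ and combine this with the formula for $\Psi$ into a map $T$ on pairs $(v,\Psi)$ in a weighted space $\{|v-\sigma|\le Kx^{-r},\,|\Psi|\le Kx^{-r}\}$ on $[R,\infty)$. Show that $T$ is a contraction for $R$ large. The kernel $e^{-\int a}$ has mass $\approx1/a=O(x^{-r})$, which gives the needed smallness. This yields existence near infinity with $\Psi<0$ and $v\downarrow\sigma$. (2) Uniqueness in the strong sense. Any positive solution with $u/x\to\sigma$ has $u'\to\sigma$, since $\Psi/x^2$ is integrable once we know that $\Psi$ cannot blow up. Indeed, if $\Psi$ ever became positive or too large at some point, then $\Psi'\ge a\Psi$ would force $\Psi\gtrsim e^{cx^{r+1}}$, contradicting $v\to\sigma$. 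Hence every such solution is a bounded solution of the linear $\Psi$-equation, and so it is a fixed point of $T$. Uniqueness of the fixed point then gives uniqueness on $[R,\infty)$, and ODE uniqueness propagates this to the common interval. (3) Extend $u_\sigma$ backward to all of $(0,\infty)$. On the maximal interval we maintain $\Psi<0$: at a first zero $x_0$ of $\Psi$, we would have $\Psi'(x_0)=b>0$, so $\Psi$ would be negative just to the left and positive to the right, contradicting that it is negative for large $x$ when coming from infinity. So $v$ is decreasing, hence $u\ge\sigma x>0$. It remains to show that $u'$ stays bounded on compact subsets of $(0,\infty)$. We expect a first integral or comparison to show that $u''>0$ persists, so that $u'$ is bounded above by its value at infinity. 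Combined with $u\ge\sigma x$ and monotone $v$, this prevents blow-up before $x=0$. (4) Derive the explicit bounds \eqref{eq:main0}--\eqref{eq:mainPsi} for every $x>0$ by a maximum-principle comparison. At a point where $\Psi$ reaches the barrier $-\frac{n-r}{r\kappa\sigma^r}x^{-r}$, the equation, using $m\ge u\ge\sigma x$ and $1+(u')^2\ge1$, forces the right sign of $\Psi'$ relative to the barrier's derivative. Integrating $v'=\Psi/x^2$ then gives \eqref{eq:main0}, and $u'-\sigma=(v-\sigma)+\Psi/x$ gives \eqref{eq:main1} with constant $\tfrac12+1=\tfrac32$. (5) Geometric consequences. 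The principal curvatures are $\lambda_{\text{rot}}=-u''/(1+u'^2)^{3/2}$ (sign per the chosen $N$) and $\lambda=1/m$ with multiplicity $n-1$. One computes $S_r=\binomc m^{-r}\bigl(\tfrac{n-r}{r}\cdots\bigr)$. Negativity of $\Psi$ gives $c\langle X,N\rangle>0$, so $S_r>0$, and the analogous $S_j$ formulas give $r$-admissibility. Finally, $|A|=O(1/x)$ follows from $u''=O(x^{-1})$ and $m\sim x$, and properness and completeness of the tail follow from $u\sim\sigma x$.

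The main obstacle is step (3): excluding finite-$x$ blow-up of $u'$ or degeneration of the graph going backward toward $x=0$. The $\kappa m^{r-1}\Psi$ term is negative and of unknown size there. I would first try to prove $u''>0$ on the whole interval via the sign structure above, using $\Psi<0$ together with a monotonicity of $m^{r-1}|\Psi|$. Failing that, I would reparametrize as a curve by arclength to rule out vertical tangents.
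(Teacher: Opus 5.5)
Your step (3) is a genuine gap, as you suspect. Nothing in the proposal shows that the solution built on $[R,\infty)$ continues as a graph over all of $(0,\infty)$, and the route you sketch would not close it.

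\begin{itemize}
\item Global convexity $u''>0$ is not available. The paper proves it only for large $x$ (Corollary~\ref{cor:sharp-defect}), and it is delicate because $\frac{n-r}{ru}$ and $\kappa m^{r-1}\Psi$ cancel at leading order at infinity.
\item Even granting $u''>0$, you use it only to get $u'<\sigma$. Together with $u\ge\sigma x$ and $u'<u/x$ (monotone $v$), every ingredient you list bounds $u'$ from above. The danger going backward is the opposite one: a vertical tangent with $u'\to-\infty$, i.e.\ $\Psi\to-\infty$, at some $x_{\min}>0$.
\item Reparametrizing by arclength cannot help, since the theorem asserts a graph $u_\sigma:(0,\infty)\to(0,\infty)$.
\end{itemize}

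The missing observation is that your own step (4) must come first and be run on the maximal interval $(x_{\min},\infty)$ with $x_{\min}\ge0$. The barrier argument uses only $\Psi<0$ there (so $u>\sigma x$), $m\ge u$, and $\Psi\to0$ at infinity. With $\phi=-\frac{n-r}{r\kappa\sigma^r}x^{-r}$, wherever $\Psi\le\phi$ one has $\Psi'\le x(1+u'^2)\bigl(\frac{n-r}{ru}+\kappa m^{r-1}\phi\bigr)\le0<\phi'$. Equivalently, use $\Psi=-\int_x^\infty b\,e^{-\int_x^s a}\,ds$ with $b/a=\frac{n-r}{r\kappa m^{r-1}u}\le\frac{n-r}{r\kappa u^r}$. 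Hence \eqref{eq:main0}--\eqref{eq:mainPsi} hold on the whole maximal interval. If $x_{\min}>0$, these bounds keep $(u,u')$ in a compact subset of $(0,\infty)\times\R$, and the standard continuation theorem gives a contradiction. This is exactly how the paper finishes Lemma~\ref{lem:finite}. There the same bounds come from the variation-of-parameters formula for the terminal problems $u_{\sigma,a}$; compactness then produces $u_\sigma$, and uniqueness is proved separately through the linearized difference equation. With this reordering, your contraction at infinity is a legitimate alternative that yields existence near infinity and uniqueness from a single fixed-point argument.

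One smaller repair is needed in step (2). For negative $\Psi$, the inequality $\Psi'\ge a\Psi$ only limits the growth of $|\Psi|$ from above, so it does not exclude $\Psi\to-\infty$. To do that, write $\Psi'=a(\Psi+b/a)$ with $b/a\le\frac{n-r}{r\kappa u^r}\to0$, or argue as the paper does that $u'>0$ eventually.
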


The admissibility assertion deserves emphasis. The principal curvatures of
$\Sigma_\sigma$ are
\[
\mu=-\frac{u_\sigma''}{(1+(u_\sigma')^2)^{3/2}},
\qquad
\lambda=\frac{1}{u_\sigma\sqrt{1+(u_\sigma')^2}},
\]
where $\lambda$ has multiplicity $n-1$. The $r$-admissibility condition is
equivalent to $\mu> -\frac{n-r}{r}\lambda$, and the profile equation shows
that this inequality is equivalent to $\Psi<0$
(Proposition~\ref{prop:admissibility}). Hence admissibility holds on the
whole end for every slope; no smallness assumption on $\sigma$ is needed.
The range $r\le n-1$ is sharp within the strictly admissible rotational
graphical class (Proposition~\ref{prop:gauss-endpoint}).

The profile equation \eqref{eq:profile-intro} is regular at $x=0$: its
right-hand side is real analytic on $\{u>0\}$ for every $x\in\R$. Thus
$u_\sigma$ continues to a maximal interval $(x_\sigma,\infty)$, with
$x_\sigma\in[-\infty,0]$, on which the generating curve remains a graph
over the axis. Theorem~\ref{thm:main} concerns the part of this interval
where $x>0$; Proposition~\ref{prop:maximal-graph} records what can be said
about the maximal graphical interval, and Question~\ref{ques:finite-end}
asks for its description.

\begin{remark}\label{cor:r1unique}
For $r=1$ we have $\kappa=-c$, and Theorem~\ref{thm:main} produces a
positive rotational graphical self-shrinking end with a prescribed conical
asymptote. After matching the normalization of the shrinker coefficient,
Kleene--M\o ller's classification of rotational self-shrinking ends
 \cite[Theorem~3, p.~3947]{KleeneMoller2014} identifies it with their trumpet. Among smooth properly embedded
self-shrinking ends that are asymptotic to $\calC_\sigma$ in the sense of
Wang, uniqueness outside a compact set is Wang's theorem
 \cite[Theorem~1.3, p.~614]{Wang2014}; it is also recovered by Theorem~\ref{thm:nr-circular}
and Corollary~\ref{cor:classification}, whose proof reduces, for $r=1$, to Bernstein's argument
\cite{Bernstein2020}.
\end{remark}

The next result describes the complete asymptotic structure of the
profile. For $r=1$ it is consistent with the ODE analysis of
Kleene--M\o ller \cite{KleeneMoller2014}.

\begin{maintheorem}\label{thm:expansion-intro}
Let $u_\sigma$ be the profile in Theorem~\ref{thm:main}. There is a
uniquely determined sequence $(c_j)_{j\ge1}$ such that, for all integers
$N\ge1$ and $k\ge0$,
\begin{equation}\label{eq:expansion-intro}
\frac{d^k}{dx^k}\left(u_\sigma(x)-\sigma x
-\sum_{j=1}^N c_jx^{1-j(r+1)}\right)
=O\left(x^{1-(N+1)(r+1)-k}\right)
\qquad(x\to\infty).
\end{equation}
The leading coefficient is
\begin{equation}\label{eq:first-coefficient}
c_1=\frac{n-r}{r(r+1)\kappa\sigma^r(1+\sigma^2)^{(r-1)/2}},
\end{equation}
and the meridian curvature satisfies
\begin{equation}\label{eq:meridian-expansion}
\mu(x)=-\frac{n-r}{\kappa\sigma^r(1+\sigma^2)^{(r+2)/2}}
 x^{-r-2}+O(x^{-2r-3}).
\end{equation}
\end{maintheorem}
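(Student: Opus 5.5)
Set $w(x)=u_\sigma(x)-\sigma x$. The plan is to turn \eqref{eq:profile-intro} into an equation for $\Psi=xw'-w$ of the form ``first-order linear part = explicit forcing + smaller terms'', and then bootstrap that identity into a full expansion in powers of $x^{-(r+1)}$.

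\textbf{Step 1: an equation for $\Psi$.} Since $u''=w''$ and $\Psi'=xu''$, the profile equation gives
\[
\Psi'=x\Bigl(\frac{n-r}{ru}+\kappa m^{r-1}\Psi\Bigr)\bigl(1+(u')^2\bigr).
\]
By Theorem~\ref{thm:main}, $u=\sigma x(1+O(x^{-r-1}))$, $u'=\sigma+O(x^{-r-1})$ and $\Psi=O(x^{-r})$. Hence
\[
m^{r-1}(1+(u')^2)=a\,x^{r-1}\bigl(1+O(x^{-r-1})\bigr),\qquad a=\sigma^{r-1}(1+\sigma^2)^{(r+1)/2},
\]
and
\[
\frac{(n-r)(1+(u')^2)}{ru}=\frac{(n-r)(1+\sigma^2)}{r\sigma x}\bigl(1+O(x^{-r-1})\bigr).
\]
So $\Psi'=\kappa a x^{r}\Psi+b x^{0}+\dots$ with $b=(n-r)(1+\sigma^2)/(r\sigma)$.

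The linear part $\kappa a x^r\Psi$ has integrating factor $e^{-\kappa a x^{r+1}/(r+1)}$. Solving backward from infinity, which is legitimate because $\Psi\to0$, gives
\[
\Psi(x)=-\int_x^\infty e^{-\kappa a(t^{r+1}-x^{r+1})/(r+1)}F(t)\,dt .
\]
Integrating by parts repeatedly in the variable $s=t^{r+1}$ turns this into an asymptotic series. In particular
\[
\Psi=-\frac{b}{\kappa a}x^{-r}+O(x^{-2r-1}).
\]
Then $w=-x\int_x^\infty \Psi/t^2\,dt$, which uses $(u/x)'=\Psi/x^2$ together with $w/x\to0$, gives
\[
w=\frac{b}{(r+1)\kappa a}\,x^{-r}+\dots
\]
This matches \eqref{eq:first-coefficient}, since $b/a=(n-r)/\bigl(r\sigma^r(1+\sigma^2)^{(r-1)/2}\bigr)$.

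\textbf{Step 2: induction on $N$.} Assume \eqref{eq:expansion-intro} holds up to order $N$ for $k\le 2$. The forcing $F$ is an analytic function of $(w/x,\,w',\,\Psi x^{r})$ multiplied by powers of $x$, so it inherits an expansion in $x^{-(r+1)}$ up to the order already known. The backward integral formula then gives an expansion of $\Psi$ one step further. Each term $c x^{-r-j(r+1)}$ of $\Psi$ produces in $w$ a term proportional to $x^{1-(j+1)(r+1)}$ (up to the nonzero factor $1/(j(r+1)+r+1)$, never resonant), so the expansion of $w$ also gains one step. The only powers that ever appear are $x^{1-j(r+1)}$. To see this, note that the equation is invariant under the weighting $x\mapsto$ degree $1$ and each correction is $x^{-(r+1)}$ smaller, because $\kappa a x^{r}$ times $x^{-1}$ gives exactly $x^{r+1}$. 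Uniqueness of the $c_j$ is automatic, since asymptotic expansions in distinct powers are unique.

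\textbf{Step 3: derivatives.} Once the expansion holds for $k=0,1$ (the case $k=1$ follows from the $\Psi$ expansion), higher $k$ come from differentiating the ODE: $w''=u''$ is given explicitly by the right side, whose expansion is termwise differentiable, and one inducts on $k$. Alternatively, the remainder of the backward integral can be estimated directly after differentiating under the integral.

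\textbf{The meridian curvature.} From the above, $u''=\Psi'/x$, and $\Psi'$ equals $\frac{d}{dx}\bigl(-\tfrac{b}{\kappa a}x^{-r}\bigr)+O(x^{-2r-2})$. This gives
\[
u''=\frac{rb}{\kappa a}\,x^{-r-2}+O(x^{-2r-3}),
\]
and dividing by $(1+\sigma^2)^{3/2}$ yields \eqref{eq:meridian-expansion}. The constant checks out: $rb/a=(n-r)/\bigl(\sigma^r(1+\sigma^2)^{(r-1)/2}\bigr)$, and the extra factor $(1+\sigma^2)^{3/2}$ brings the exponent to $(r+2)/2$.

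\textbf{Main obstacle.} The main obstacle is the remainder control in Step 2. Each integration by parts in the integral with the rapidly decaying kernel must give error bounds uniform enough to close the induction. I would handle this by estimating
\[
\int_x^\infty e^{-\kappa a(t^{r+1}-x^{r+1})/(r+1)}\,t^{-p}\,dt\le C x^{-p-r},
\]
which makes every step routine.
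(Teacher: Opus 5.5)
Your argument is correct in outline but follows a different route from the paper. The paper first builds the formal series independently of the solution. It substitutes $t=x^{-(r+1)}$, $u=xv(t)$, obtains an equation $\mathcal H[v]=0$ analytic at $t=0$, and solves a recursion for the $c_j$ whose denominator $\kappa qj\sigma^{r-1}(1+\sigma^2)^{(r+1)/2}$ never vanishes ($q=r+1$). It then compares $u$ with the truncation $U_N$ through the second-order linearized equation for $(u-U_N)/x$. There the drift is $\ge cx^r$, and the cancellation in $F_u+F_p/x$ makes the zeroth-order coefficient $O(x^{-2})$; an absorption argument then gives the $C^0$ and $C^1$ remainders. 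Higher derivatives come from polynomial bounds on $u^{(\ell)}$ plus a Landau--Kolmogorov interpolation at an order $M>N$. You instead bootstrap on the scalar first-order equation $\Psi'=\mathcal A\Psi+b$, which the paper uses only in Lemma~\ref{lem:automatic-decay} and Proposition~\ref{prop:defect-leading}, with the kernel frozen at $e^{-\kappa a(t^q-x^q)/q}$. Integration by parts gives $I_j=(\kappa a)^{-1}x^{-r-jq}-(\kappa a)^{-1}(r+jq)I_{j+1}$ for $I_j(x)=\int_x^\infty e^{-\kappa a(t^q-x^q)/q}t^{-jq}\,dt$, which is exactly why only the powers $x^{1-jq}$ occur. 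Your route is more elementary: coefficients and remainders emerge together, with no separate formal construction, cancellation identity or contraction. The paper's route gives a solution-independent recursion (hence formulas such as $c_2$), and its linearized comparison is reused, uniformly in complex slopes, in Section~\ref{sec:analytic}.

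Two steps need to be stated more carefully.

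\emph{Step 2.} The quantity $x^r\Psi=x^{r+1}(w'-w/x)$ is known one order less precisely than $w/x$ and $w'$. So an arbitrary analytic function of $(w/x,w',x^r\Psi)$ would gain nothing. The gain comes from the fact that $F=b+(\mathcal A-\kappa ax^r)\Psi$ is affine in $x^r\Psi$, with coefficient $\mathcal A/x^r-\kappa a=O(x^{-q})$.

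\emph{Step 3.} Differentiating the ODE at a fixed truncation order loses a factor $x^{q}$ per derivative, because $\mathcal A\sim x^r$ multiplies the remainder of $\Psi$. Thus the order-$(N+1)$ bounds with $k\le1$ yield the $k=2$ bound only at order $N$. You must use the $k\le1$ expansion at order about $N+k$ and then discard the extra explicit terms; this is the same device as the paper's choice $M>N$. For the same reason, the induction hypothesis in Step~2 should be $k\le1$, not $k\le2$.
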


Combining Theorems~\ref{thm:nr-circular}, \ref{thm:main}
and~\ref{thm:expansion-intro} we obtain a complete description of the
ends asymptotic to a circular cone.

\begin{corollary}\label{cor:classification}
Let $1\le r\le n-1$, $c<0$, and $\sigma>0$. Every smooth end satisfying
\eqref{eq:shrinker} that is smoothly asymptotic, with multiplicity one,
to $\calC_\sigma$, with the orientation converging to
$N_C=(\sigma,-\omega)/\sqrt{1+\sigma^2}$, coincides outside a compact set
with the rotational end $\Sigma_\sigma$ of Theorem~\ref{thm:main}. In
particular, it is rotationally symmetric and $r$-admissible outside a
compact set, and it has the asymptotic expansion of
Theorem~\ref{thm:expansion-intro}.
\end{corollary}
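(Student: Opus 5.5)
The corollary follows once we check that the rotational end $\Sigma_\sigma$ of Theorem~\ref{thm:main} satisfies the hypotheses of Theorem~\ref{thm:nr-circular}; the rest then transfers from $\Sigma_\sigma$. So the plan is: (a) show $\Sigma_\sigma$ is a smooth end solving \eqref{eq:shrinker} that is smoothly asymptotic, with multiplicity one, to $\calC_\sigma$ in the sense of \eqref{eq:nr-graph-intro}, with orientation converging to $N_C$; (b) apply Theorem~\ref{thm:nr-circular} to the pair (given end, $\Sigma_\sigma$); (c) read off symmetry, admissibility and the expansion.

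For (a), the orientation is immediate. The normal $N=(u_\sigma',-\omega)/\sqrt{1+(u_\sigma')^2}$ tends to $(\sigma,-\omega)/\sqrt{1+\sigma^2}=N_C$ by \eqref{eq:main1}.

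The graphical representation is the only real work. We write the point $(x,u_\sigma(x)\omega)$ as $\rho\theta+fN_C(\theta)$ with $\theta=(1,\sigma\omega)/\sqrt{1+\sigma^2}$. Projecting onto the cone generator and onto $N_C$ gives
\[
\rho=\frac{x+\sigma u_\sigma}{\sqrt{1+\sigma^2}},\qquad f=\frac{\sigma x-u_\sigma}{\sqrt{1+\sigma^2}}.
\]
Here $\sigma x-u_\sigma=O(x^{-r})$ by \eqref{eq:main0}. By \eqref{eq:main1}, $d\rho/dx>0$ for large $x$, so $x\mapsto\rho$ is a diffeomorphism of a half-line. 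Hence $f$ is a function of $\rho$ alone, independent of the angular variable, and the graph has multiplicity one. For the derivative bounds we use Theorem~\ref{thm:expansion-intro}: it gives $d^k(u_\sigma-\sigma x)/dx^k=O(x^{-r-k})$. Since $dx/d\rho$ is smooth with all its $x$-derivatives controlled, the chain rule gives $\partial_\rho^kf=O(\rho^{-r-k})$. Because $f$ is rotationally invariant, the covariant derivatives $\nabla_C^jf$ on the cone involve only radial derivatives combined with Christoffel terms of size $\rho^{-1}$. Therefore $|\nabla^j_Cf|=O(\rho^{-r-j})=o(\rho^{1-j})$, which is \eqref{eq:nr-graph-intro}.

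For (b) and (c), Theorem~\ref{thm:nr-circular} now says the given end coincides with $\Sigma_\sigma$ outside a compact set. Rotational symmetry is then inherited. The $r$-admissibility follows from Proposition~\ref{prop:admissibility} together with $\Psi<0$ in \eqref{eq:mainPsi}. The asymptotic expansion is that of Theorem~\ref{thm:expansion-intro}.

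The one delicate point is the bookkeeping in (a): the decay of $f$ must be uniform after arbitrary differentiation on the cone, and this needs the differentiated form of the expansion rather than only \eqref{eq:main0}--\eqref{eq:main1}. Since Theorem~\ref{thm:expansion-intro} provides exactly that, I expect no genuine obstacle.
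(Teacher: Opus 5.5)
Your proposal is correct and follows the paper's proof. Both check that $\Sigma_\sigma$ is smoothly asymptotic, with multiplicity one, to $\calC_\sigma$, with orientation converging to $N_C$, using the differentiated expansion of Theorem~\ref{thm:expansion-intro}, and then apply Theorem~\ref{thm:nr-circular} to the pair. The only difference is that you verify \eqref{eq:nr-graph-intro} directly through the explicit normal-graph function $f=(\sigma x-u_\sigma)/\sqrt{1+\sigma^2}$, whereas the paper cites the rescaled $C^k$ convergence of Proposition~\ref{prop:geometry}.
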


\begin{proof}
By Theorem~\ref{thm:expansion-intro}, the end $\Sigma_\sigma$ is smoothly
asymptotic to $\calC_\sigma$ with multiplicity one
(Proposition~\ref{prop:geometry}), and its orientation converges to
$N_C$. Theorem~\ref{thm:nr-circular} applies to $\Sigma_\sigma$ and the
given end.
\end{proof}

The asymptotic hypothesis of Theorem~\ref{thm:nr-circular} is stronger
than the scalar slope condition $u(x)/x\to\sigma$ that suffices for
uniqueness within the rotational class (Theorem~\ref{thm:main}); in
exchange, no symmetry is assumed.

\subsection*{Dependence on the slope}
The rotational profiles depend real analytically on the cone slope.

\begin{maintheorem}\label{thm:analytic-intro}
For every compact interval $I\Subset(0,\infty)$ there is $R_I>0$ such
that $\sigma\mapsto u_\sigma-\sigma x$ is real analytic on a neighborhood
of $I$ with values in weighted H\"older spaces on $[R_I,\infty)$ that
control $u_\sigma-\sigma x$ and all its derivatives at infinity, and
$\sigma\mapsto u_\sigma\in C^{k,\alpha}([\varepsilon,E])$ is real
analytic for every $k\ge0$, $0<\alpha<1$, and $0<\varepsilon<E$. Moreover,
$\partial_\sigma u_\sigma(x)=x+O_I(x^{-r})$, and for slopes in the
interior of $I$ the ends $\Sigma_\sigma\cap\{x>R_I\}$ form a
real-analytic foliation of their image.
\end{maintheorem}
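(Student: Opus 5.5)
The plan is to realize $u_\sigma$ as the solution of a fixed-point problem at infinity that depends analytically on $\sigma$, and then to continue inward by ODE theory. Write $u=\sigma x+v$ and put $t=x^{-(r+1)}$, following the expansion in Theorem~\ref{thm:expansion-intro}. Substituting into \eqref{eq:profile-intro} and using $\Psi=xv'-v$, I will rewrite the profile equation as a first-order system for $(\Psi,v)$:
\[
\Psi'=xu''=x\Bigl(\frac{n-r}{ru}+\kappa m^{r-1}\Psi\Bigr)\bigl(1+(u')^2\bigr),\qquad \Bigl(\frac vx\Bigr)'=\frac{\Psi}{x^2}.
\]
At leading order the linear part is $\Psi'\approx \kappa\sigma^{r-1}(1+\sigma^2)^{(r+1)/2}x^{r}\,\Psi$ plus the forcing $(n-r)(1+\sigma^2)/(r\sigma)$. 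This structure is exactly why the solution bounded at infinity is unique (Theorem~\ref{thm:main}). Integrating the unstable direction from $+\infty$ gives the inverse
\[
\Psi(x)=-\int_x^\infty e^{-\int_x^s a}\,F(s)\,ds,
\]
with $a(s)\sim A(\sigma)s^r$, and similarly $v/x=-\int_x^\infty \Psi/s^2$. The result is a map $\mathcal T(\sigma,v)$ acting on the weighted H\"older space
\[
\mathcal X_R=\Bigl\{v:\ \sup_{x\ge R}x^{r+k}|v^{(k)}|<\infty,\ k\le K,\ \text{plus H\"older seminorm}\Bigr\},
\]
which is the space controlling $u_\sigma-\sigma x$ as required.

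The first key step is to check that $\mathcal T$ is jointly real analytic in $(\sigma,v)$ for complex $\sigma$ near $I$ and small $v\in\mathcal X_R$. The nonlinearities $u^{-1}$, $m^{r-1}=u^{r-1}(1+(u')^2)^{(r-1)/2}$ and $(1+(u')^2)$ are analytic compositions, since $u\approx\sigma x$ stays away from $0$ and $1+(u')^2$ stays away from $(-\infty,0]$ for $\sigma$ near $I$. Superposition operators with analytic symbols are analytic on weighted H\"older spaces.

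The second step is to show that $\mathcal T(\sigma,\cdot)$ is a contraction for $R=R_I$ large. The decay gain here comes from $e^{-\int a}$, which yields a factor $x^{-r}$, and this is the estimate I expect to be the main obstacle: obtaining uniform bounds in the weighted H\"older norms for complex $\sigma$, including derivatives of the exponential kernel, with constants uniform on a complex neighborhood of $I$. I would first try to handle the kernel by the change of variable $\tau=\int a$, which turns it into an essentially convolution-type operator with a uniformly bounded inverse. Once the contraction is in place, the analytic implicit function theorem (or the uniform limit of analytic iterates) gives analyticity of $\sigma\mapsto v_\sigma$. By the uniqueness part of Theorem~\ref{thm:main}, the fixed point for real $\sigma$ is $u_\sigma-\sigma x$.

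For the compact region $[\varepsilon,E]$, I will use the data $(u_\sigma,u_\sigma')$ at $x=R_I$, which are analytic in $\sigma$. Analytic dependence of ODE solutions on initial data and parameters then propagates analyticity backward in $C^{k,\alpha}([\varepsilon,E])$; the solutions exist on $(0,\infty)$ by Theorem~\ref{thm:main}. Next, differentiating the fixed-point equation in $\sigma$ gives $\partial_\sigma v_\sigma\in\mathcal X_R$, so $\partial_\sigma u_\sigma=x+\partial_\sigma v_\sigma=x+O_I(x^{-r})$.

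Finally, for the foliation, I will consider the map $(\sigma,x,\omega)\mapsto(x,u_\sigma(x)\omega)$. Its Jacobian in $\sigma$ transverse to the leaf is proportional to $\partial_\sigma u_\sigma(x)=x+O(x^{-r})>0$ for $x>R_I$, after enlarging $R_I$ if necessary. Hence $\sigma\mapsto u_\sigma(x)$ is strictly increasing for each fixed $x$, so the leaves are disjoint. The map is then an analytic injective immersion of $\operatorname{int}I\times\{x>R_I\}\times\Sph^{n-1}$ onto its image. It is open by invariance of domain, hence a real-analytic diffeomorphism onto its image, which gives the foliation.
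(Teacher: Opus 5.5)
Your plan is sound, but its technical core differs from the paper's. The paper never runs a fixed point in higher-order spaces. It applies the analytic implicit function theorem to $\mathscr G(\sigma,v)=v''-F(x,\sigma x+v,\sigma+v')$ on a graph-norm domain $\mathcal D_0(R)$ modelled on weighted $C^1$. There it inverts the linearization $L_0$ by the same Volterra integral used for uniqueness (Lemma~\ref{lem:slope-linear-inverse}). It does so because the $x^r$ drift rules out a two-derivative gain for $L_0^{-1}$ in ordinary weighted H\"older spaces. Higher orders are obtained afterwards:
\begin{enumerate}
\item the proof of the asymptotic expansion is rerun uniformly for complex slopes;
\item the local interpolation inequality gives bounds in every $\mathcal X^{k,\alpha}_r(R)$ on a fixed parameter disc;
\item Cauchy's formula turns local boundedness into analyticity;
\item the compact part of $[R,\infty)$ is handled by the ODE, so a single $R_I$ serves every $k$.
\end{enumerate}
Your nonlinear Volterra map built on $\Psi'=a\Psi+b$ avoids the derivative-loss problem in another way. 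The $K$th derivative of $\mathcal T(\sigma,v)$ involves only derivatives of $v$ up to order $K$, and differences gain $x^{-r-1}$. In weighted $C^1$ one checks $|\delta a|\lesssim x^{-1}\|\delta v\|$ and $|\delta b|\lesssim x^{-r-1}\|\delta v\|$, so the Lipschitz constant is $O(R^{-r-1})$. This route gives existence near infinity, local uniqueness, and analyticity in all orders at once, without using Section~\ref{sec:expansion}. The price is the kernel estimate you defer.

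That estimate is true, but do not prove it with the substitution $\tau=\int_x^s a$. For complex $\sigma$ the function $a$ is complex, so this is not a real change of variables. Instead, set $\mathcal K[g](x)=\int_x^\infty g(s)e^{-\int_x^s a}\,ds$ and use the integration-by-parts identity
\[
\frac{d}{dx}\mathcal K[g]=a\,\mathcal K\Bigl[\Bigl(\frac ga\Bigr)'\Bigr].
\]
It exhibits the cancellation in $a\Psi+b$. Combined with $|e^{-\int_x^s a}|\le e^{-\underline c\,x^r(s-x)}$, it iterates to $|\Psi^{(j)}|\lesssim x^{-r-j}$ uniformly on a complex neighborhood.

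Three smaller points need attention:
\begin{enumerate}
\item $u_\sigma-\sigma x$ has weighted norm about $c_1(\sigma)$, not a small number. So work on a ball of fixed radius with $R$ large, rather than with ``small'' $v$.
\item $R_I$ must not grow with $K$. Either check that the top-order self-coupling constant is independent of $K$, or cover $[R_I,R_K]$ by analytic dependence on data, as the paper does.
\item The foliation needs joint analyticity of $(x,\sigma)\mapsto u_\sigma(x)$. This follows from the analytic ODE flow.
\end{enumerate}
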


Theorem~\ref{thm:analytic-intro} is proved in
Section~\ref{sec:analytic} (Theorem~\ref{thm:analytic-slope-dependence}
and Corollary~\ref{cor:analytic-exterior-foliation}). It does not imply a
global ordering of the family: numerical computations indicate that
profiles with different slopes intersect near $x=0$
(Remark~\ref{rem:numerical-crossing}).

Figure~\ref{fig:rotational-profiles} illustrates the rotational profiles
and their asymptotic cones for $r=1,2$. This figure and
Remarks~\ref{rem:numerical-maximal} and~\ref{rem:numerical-crossing}
report numerical computations; they are illustrations, not certified
bounds, and no proof in the paper uses them.

\begin{figure}[htbp]
\centering
\definecolor{c1}{RGB}{31,119,180}
\definecolor{c2}{RGB}{217,95,2}
\definecolor{c3}{RGB}{27,158,119}
\begin{tikzpicture}
\begin{groupplot}[
  group style={group size=2 by 2, horizontal sep=1.1cm, vertical sep=1.3cm},
  width=0.5\textwidth, height=0.36\textwidth,
  tick label style={font=\scriptsize}, label style={font=\footnotesize},
  title style={font=\footnotesize}, every axis plot/.append style={line width=0.8pt},
  grid=major, grid style={gray!20}, axis line style={gray!70}]
\nextgroupplot[title={$r=1$}, xmin=-5, xmax=25, ymin=0, ymax=52,
  ylabel={$u_\sigma(x)$},
  legend style={font=\scriptsize, at={(0.03,0.97)}, anchor=north west, draw=none, fill=white, fill opacity=0.8, text opacity=1},
  legend cell align=left]
\addplot[c1,thick] coordinates {(-2.578,2.347) (-2.375,1.945) (-2.079,1.731) (-1.768,1.587) (-1.453,1.482) (-1.134,1.408) (-0.814,1.357) (-0.493,1.329) (-0.172,1.322) (0.149,1.335) (0.470,1.365) (0.791,1.412) (1.110,1.473) (1.428,1.547) (1.746,1.632) (2.063,1.726) (2.379,1.828) (2.694,1.937) (3.008,2.051) (3.322,2.170) (3.636,2.294) (3.949,2.421) (4.261,2.551) (4.573,2.683) (4.885,2.818) (5.197,2.954) (5.508,3.092) (5.819,3.232) (6.130,3.373) (6.441,3.515) (6.751,3.658) (7.062,3.802) (7.372,3.946) (7.682,4.092) (7.993,4.238) (8.303,4.384) (8.613,4.532) (8.922,4.679) (9.232,4.827) (9.542,4.975) (9.852,5.124) (10.161,5.273) (10.471,5.423) (10.781,5.572) (11.090,5.722) (11.400,5.872) (11.709,6.023) (12.018,6.173) (12.328,6.324) (12.637,6.475) (12.947,6.626) (13.256,6.777) (13.565,6.928) (13.874,7.080) (14.184,7.231) (14.493,7.383) (14.802,7.535) (15.111,7.687) (15.420,7.839) (15.730,7.991) (16.039,8.143) (16.348,8.295) (16.657,8.448) (16.966,8.600) (17.275,8.753) (17.584,8.905) (17.893,9.058) (18.202,9.210) (18.511,9.363) (18.820,9.516) (19.129,9.669) (19.439,9.822) (19.748,9.975) (20.057,10.128) (20.366,10.281) (20.675,10.434) (20.984,10.587) (21.293,10.740) (21.602,10.893) (21.911,11.046) (22.219,11.199) (22.528,11.353) (22.837,11.506) (23.146,11.659) (23.455,11.813) (23.764,11.966) (24.073,12.119) (24.382,12.273) (24.691,12.426) (25.000,12.580)};
\addlegendentry{$\sigma=0.5$}
\addplot[c1,dashed,forget plot,domain=0:25,samples=2] {0.5*x};
\addplot[c2,thick] coordinates {(-1.678,2.080) (-1.492,1.605) (-1.189,1.335) (-0.858,1.186) (-0.518,1.129) (-0.176,1.150) (0.163,1.237) (0.495,1.377) (0.821,1.557) (1.142,1.766) (1.457,1.996) (1.769,2.241) (2.078,2.496) (2.385,2.760) (2.690,3.029) (2.993,3.303) (3.296,3.581) (3.597,3.861) (3.898,4.143) (4.199,4.428) (4.499,4.713) (4.798,5.000) (5.098,5.289) (5.397,5.577) (5.695,5.867) (5.994,6.157) (6.292,6.448) (6.590,6.740) (6.888,7.031) (7.186,7.324) (7.484,7.616) (7.782,7.909) (8.080,8.202) (8.377,8.495) (8.675,8.789) (8.972,9.082) (9.269,9.376) (9.567,9.670) (9.864,9.965) (10.161,10.259) (10.458,10.553) (10.755,10.848) (11.053,11.142) (11.350,11.437) (11.647,11.732) (11.944,12.027) (12.241,12.322) (12.538,12.617) (12.835,12.912) (13.131,13.207) (13.428,13.503) (13.725,13.798) (14.022,14.093) (14.319,14.389) (14.616,14.684) (14.913,14.979) (15.209,15.275) (15.506,15.571) (15.803,15.866) (16.100,16.162) (16.397,16.457) (16.693,16.753) (16.990,17.049) (17.287,17.345) (17.584,17.640) (17.880,17.936) (18.177,18.232) (18.474,18.528) (18.770,18.824) (19.067,19.119) (19.364,19.415) (19.660,19.711) (19.957,20.007) (20.254,20.303) (20.550,20.599) (20.847,20.895) (21.144,21.191) (21.440,21.487) (21.737,21.783) (22.034,22.079) (22.330,22.375) (22.627,22.671) (22.924,22.967) (23.220,23.263) (23.517,23.559) (23.814,23.856) (24.110,24.152) (24.407,24.448) (24.703,24.744) (25.000,25.040)};
\addlegendentry{$\sigma=1$}
\addplot[c2,dashed,forget plot,domain=0:25,samples=2] {1.0*x};
\addplot[c3,thick] coordinates {(-1.245,1.574) (-0.990,0.994) (-0.564,0.826) (-0.134,0.968) (0.255,1.323) (0.604,1.788) (0.930,2.303) (1.242,2.841) (1.547,3.392) (1.848,3.950) (2.145,4.514) (2.441,5.080) (2.735,5.648) (3.029,6.218) (3.321,6.790) (3.613,7.362) (3.904,7.935) (4.195,8.508) (4.486,9.082) (4.776,9.656) (5.067,10.231) (5.357,10.806) (5.647,11.381) (5.936,11.956) (6.226,12.532) (6.516,13.107) (6.805,13.683) (7.094,14.259) (7.384,14.835) (7.673,15.411) (7.962,15.987) (8.251,16.563) (8.541,17.140) (8.830,17.716) (9.119,18.292) (9.408,18.869) (9.697,19.445) (9.986,20.022) (10.275,20.598) (10.564,21.175) (10.853,21.751) (11.142,22.328) (11.431,22.905) (11.719,23.481) (12.008,24.058) (12.297,24.635) (12.586,25.212) (12.875,25.788) (13.164,26.365) (13.452,26.942) (13.741,27.519) (14.030,28.095) (14.319,28.672) (14.607,29.249) (14.896,29.826) (15.185,30.403) (15.474,30.980) (15.762,31.557) (16.051,32.134) (16.340,32.710) (16.629,33.287) (16.917,33.864) (17.206,34.441) (17.495,35.018) (17.783,35.595) (18.072,36.172) (18.361,36.749) (18.650,37.326) (18.938,37.903) (19.227,38.480) (19.516,39.057) (19.804,39.634) (20.093,40.211) (20.382,40.788) (20.670,41.365) (20.959,41.942) (21.248,42.519) (21.536,43.096) (21.825,43.673) (22.114,44.250) (22.402,44.827) (22.691,45.404) (22.980,45.981) (23.268,46.558) (23.557,47.135) (23.845,47.712) (24.134,48.289) (24.423,48.866) (24.711,49.443) (25.000,50.020)};
\addlegendentry{$\sigma=2$}
\addplot[c3,dashed,forget plot,domain=0:25,samples=2] {2.0*x};
\draw[gray!70] (axis cs:0,0) -- (axis cs:0,52);
\nextgroupplot[title={$r=2$}, xmin=-5, xmax=25, ymin=0, ymax=52]
\addplot[c1,thick] coordinates {(-2.807,0.915) (-2.518,1.125) (-2.195,1.135) (-1.873,1.078) (-1.553,0.993) (-1.234,0.901) (-0.915,0.813) (-0.594,0.739) (-0.272,0.687) (0.052,0.664) (0.375,0.673) (0.698,0.714) (1.019,0.783) (1.338,0.876) (1.656,0.986) (1.971,1.109) (2.286,1.241) (2.600,1.379) (2.913,1.521) (3.225,1.666) (3.537,1.814) (3.849,1.963) (4.161,2.114) (4.473,2.265) (4.784,2.418) (5.096,2.570) (5.407,2.724) (5.718,2.877) (6.030,3.031) (6.341,3.185) (6.652,3.339) (6.963,3.494) (7.274,3.648) (7.585,3.803) (7.896,3.958) (8.207,4.112) (8.518,4.267) (8.829,4.422) (9.140,4.577) (9.451,4.732) (9.762,4.887) (10.074,5.043) (10.385,5.198) (10.696,5.353) (11.007,5.508) (11.318,5.663) (11.628,5.819) (11.939,5.974) (12.250,6.129) (12.561,6.285) (12.872,6.440) (13.183,6.595) (13.494,6.750) (13.805,6.906) (14.116,7.061) (14.427,7.217) (14.738,7.372) (15.049,7.527) (15.360,7.683) (15.671,7.838) (15.982,7.993) (16.293,8.149) (16.604,8.304) (16.915,8.460) (17.226,8.615) (17.537,8.770) (17.848,8.926) (18.159,9.081) (18.470,9.237) (18.781,9.392) (19.092,9.548) (19.403,9.703) (19.714,9.858) (20.025,10.014) (20.336,10.169) (20.647,10.325) (20.958,10.480) (21.269,10.636) (21.579,10.791) (21.890,10.946) (22.201,11.102) (22.512,11.257) (22.823,11.413) (23.134,11.568) (23.445,11.724) (23.756,11.879) (24.067,12.035) (24.378,12.190) (24.689,12.345) (25.000,12.501)};
\addplot[c1,dashed,domain=0:25,samples=2] {0.5*x};
\addplot[c2,thick] coordinates {(-1.366,1.154) (-1.111,0.782) (-0.798,0.557) (-0.466,0.436) (-0.127,0.429) (0.207,0.532) (0.529,0.719) (0.840,0.955) (1.144,1.218) (1.444,1.495) (1.742,1.778) (2.039,2.066) (2.334,2.355) (2.630,2.646) (2.925,2.938) (3.220,3.231) (3.514,3.524) (3.809,3.817) (4.104,4.111) (4.398,4.404) (4.693,4.698) (4.987,4.992) (5.282,5.286) (5.576,5.580) (5.870,5.874) (6.165,6.168) (6.459,6.462) (6.753,6.756) (7.048,7.050) (7.342,7.344) (7.636,7.638) (7.931,7.933) (8.225,8.227) (8.519,8.521) (8.814,8.815) (9.108,9.109) (9.402,9.404) (9.697,9.698) (9.991,9.992) (10.285,10.286) (10.580,10.581) (10.874,10.875) (11.168,11.169) (11.462,11.463) (11.757,11.758) (12.051,12.052) (12.345,12.346) (12.640,12.640) (12.934,12.935) (13.228,13.229) (13.523,13.523) (13.817,13.817) (14.111,14.112) (14.405,14.406) (14.700,14.700) (14.994,14.995) (15.288,15.289) (15.583,15.583) (15.877,15.877) (16.171,16.172) (16.466,16.466) (16.760,16.760) (17.054,17.055) (17.348,17.349) (17.643,17.643) (17.937,17.937) (18.231,18.232) (18.526,18.526) (18.820,18.820) (19.114,19.114) (19.408,19.409) (19.703,19.703) (19.997,19.997) (20.291,20.292) (20.586,20.586) (20.880,20.880) (21.174,21.174) (21.468,21.469) (21.763,21.763) (22.057,22.057) (22.351,22.352) (22.646,22.646) (22.940,22.940) (23.234,23.234) (23.529,23.529) (23.823,23.823) (24.117,24.117) (24.411,24.412) (24.706,24.706) (25.000,25.000)};
\addplot[c2,dashed,domain=0:25,samples=2] {1.0*x};
\addplot[c3,thick] coordinates {(-0.882,0.777) (-0.574,0.263) (-0.147,0.203) (0.226,0.595) (0.539,1.129) (0.834,1.693) (1.125,2.263) (1.414,2.836) (1.702,3.410) (1.990,3.985) (2.278,4.560) (2.566,5.134) (2.854,5.709) (3.141,6.284) (3.429,6.860) (3.717,7.435) (4.004,8.010) (4.292,8.585) (4.580,9.160) (4.867,9.735) (5.155,10.310) (5.443,10.886) (5.730,11.461) (6.018,12.036) (6.305,12.611) (6.593,13.186) (6.881,13.762) (7.168,14.337) (7.456,14.912) (7.743,15.487) (8.031,16.062) (8.319,16.638) (8.606,17.213) (8.894,17.788) (9.182,18.363) (9.469,18.939) (9.757,19.514) (10.044,20.089) (10.332,20.664) (10.620,21.239) (10.907,21.815) (11.195,22.390) (11.482,22.965) (11.770,23.540) (12.058,24.115) (12.345,24.691) (12.633,25.266) (12.920,25.841) (13.208,26.416) (13.496,26.991) (13.783,27.567) (14.071,28.142) (14.359,28.717) (14.646,29.292) (14.934,29.868) (15.221,30.443) (15.509,31.018) (15.797,31.593) (16.084,32.168) (16.372,32.744) (16.659,33.319) (16.947,33.894) (17.235,34.469) (17.522,35.044) (17.810,35.620) (18.097,36.195) (18.385,36.770) (18.673,37.345) (18.960,37.921) (19.248,38.496) (19.535,39.071) (19.823,39.646) (20.111,40.221) (20.398,40.797) (20.686,41.372) (20.973,41.947) (21.261,42.522) (21.549,43.097) (21.836,43.673) (22.124,44.248) (22.412,44.823) (22.699,45.398) (22.987,45.974) (23.274,46.549) (23.562,47.124) (23.850,47.699) (24.137,48.274) (24.425,48.850) (24.712,49.425) (25.000,50.000)};
\addplot[c3,dashed,domain=0:25,samples=2] {2.0*x};
\draw[gray!70] (axis cs:0,0) -- (axis cs:0,52);
\nextgroupplot[xmin=-2.5, xmax=2.5, ymin=0, ymax=3, xlabel={$x$}, ylabel={$u_\sigma(x)$}]
\addplot[c1,thick] coordinates {(-2.500,2.099) (-2.477,2.063) (-2.450,2.028) (-2.422,1.994) (-2.391,1.961) (-2.357,1.928) (-2.322,1.896) (-2.284,1.866) (-2.244,1.836) (-2.203,1.807) (-2.159,1.778) (-2.115,1.751) (-2.068,1.725) (-2.021,1.699) (-1.972,1.675) (-1.922,1.651) (-1.870,1.628) (-1.818,1.606) (-1.765,1.585) (-1.711,1.565) (-1.656,1.546) (-1.600,1.527) (-1.544,1.509) (-1.487,1.492) (-1.429,1.476) (-1.370,1.461) (-1.311,1.446) (-1.252,1.432) (-1.192,1.419) (-1.131,1.407) (-1.071,1.396) (-1.009,1.385) (-0.948,1.375) (-0.886,1.366) (-0.823,1.358) (-0.761,1.351) (-0.698,1.345) (-0.635,1.339) (-0.571,1.334) (-0.508,1.330) (-0.444,1.327) (-0.381,1.325) (-0.317,1.323) (-0.253,1.322) (-0.189,1.322) (-0.126,1.323) (-0.062,1.324) (0.002,1.327) (0.065,1.330) (0.129,1.333) (0.192,1.338) (0.255,1.343) (0.318,1.349) (0.381,1.355) (0.444,1.362) (0.507,1.370) (0.569,1.378) (0.631,1.387) (0.693,1.396) (0.754,1.406) (0.816,1.416) (0.877,1.427) (0.938,1.438) (0.999,1.450) (1.059,1.462) (1.119,1.475) (1.179,1.488) (1.239,1.501) (1.298,1.515) (1.358,1.529) (1.417,1.544) (1.476,1.559) (1.534,1.574) (1.593,1.589) (1.651,1.605) (1.709,1.621) (1.766,1.637) (1.824,1.654) (1.881,1.671) (1.938,1.688) (1.995,1.705) (2.052,1.722) (2.109,1.740) (2.165,1.758) (2.221,1.776) (2.277,1.794) (2.333,1.813) (2.389,1.831) (2.445,1.850) (2.500,1.869)};
\addplot[c1,dashed,domain=0:2.500,samples=2] {0.5*x};
\addplot[c2,thick] coordinates {(-1.678,2.080) (-1.676,2.037) (-1.671,1.993) (-1.662,1.950) (-1.652,1.907) (-1.638,1.864) (-1.622,1.822) (-1.603,1.780) (-1.581,1.739) (-1.557,1.698) (-1.531,1.657) (-1.502,1.617) (-1.470,1.578) (-1.436,1.540) (-1.400,1.502) (-1.361,1.465) (-1.320,1.430) (-1.276,1.395) (-1.229,1.362) (-1.180,1.330) (-1.128,1.300) (-1.073,1.271) (-1.016,1.244) (-0.956,1.220) (-0.894,1.197) (-0.829,1.178) (-0.762,1.161) (-0.693,1.148) (-0.623,1.138) (-0.552,1.131) (-0.479,1.127) (-0.407,1.127) (-0.335,1.131) (-0.263,1.137) (-0.192,1.147) (-0.123,1.159) (-0.054,1.174) (0.012,1.191) (0.078,1.209) (0.142,1.230) (0.204,1.252) (0.266,1.275) (0.326,1.299) (0.384,1.325) (0.442,1.351) (0.498,1.378) (0.554,1.406) (0.608,1.435) (0.662,1.464) (0.715,1.494) (0.767,1.524) (0.818,1.555) (0.869,1.586) (0.919,1.618) (0.968,1.649) (1.017,1.682) (1.066,1.714) (1.113,1.747) (1.161,1.779) (1.208,1.813) (1.254,1.846) (1.301,1.879) (1.346,1.913) (1.392,1.947) (1.437,1.981) (1.482,2.015) (1.527,2.049) (1.571,2.084) (1.615,2.118) (1.659,2.153) (1.703,2.188) (1.746,2.222) (1.789,2.257) (1.832,2.292) (1.875,2.327) (1.918,2.363) (1.960,2.398) (2.003,2.433) (2.045,2.468) (2.087,2.504) (2.129,2.539) (2.170,2.575) (2.212,2.611) (2.253,2.646) (2.295,2.682) (2.336,2.718) (2.377,2.753) (2.418,2.789) (2.459,2.825) (2.500,2.861)};
\addplot[c2,dashed,domain=0:2.500,samples=2] {1.0*x};
\addplot[c3,thick] coordinates {(-1.245,1.574) (-1.241,1.509) (-1.233,1.445) (-1.219,1.381) (-1.199,1.317) (-1.175,1.254) (-1.144,1.192) (-1.107,1.131) (-1.064,1.072) (-1.012,1.015) (-0.952,0.962) (-0.881,0.913) (-0.799,0.872) (-0.704,0.841) (-0.599,0.827) (-0.492,0.830) (-0.389,0.850) (-0.296,0.882) (-0.211,0.922) (-0.134,0.967) (-0.064,1.016) (0.001,1.068) (0.062,1.121) (0.119,1.176) (0.173,1.232) (0.225,1.289) (0.275,1.346) (0.323,1.404) (0.369,1.462) (0.415,1.521) (0.458,1.580) (0.501,1.640) (0.543,1.699) (0.585,1.759) (0.625,1.819) (0.665,1.879) (0.704,1.939) (0.743,2.000) (0.781,2.060) (0.819,2.121) (0.856,2.181) (0.893,2.242) (0.930,2.303) (0.966,2.364) (1.002,2.425) (1.038,2.486) (1.074,2.547) (1.109,2.608) (1.144,2.669) (1.179,2.730) (1.214,2.792) (1.249,2.853) (1.283,2.914) (1.318,2.975) (1.352,3.037)};
\addplot[c3,dashed,domain=0:1.500,samples=2] {2.0*x};
\draw[gray!70] (axis cs:0,0) -- (axis cs:0,3);
\nextgroupplot[xmin=-2.5, xmax=2.5, ymin=0, ymax=3, xlabel={$x$}]
\addplot[c1,thick] coordinates {(-2.500,1.128) (-2.441,1.137) (-2.380,1.141) (-2.319,1.142) (-2.258,1.140) (-2.197,1.135) (-2.137,1.128) (-2.078,1.119) (-2.019,1.109) (-1.961,1.097) (-1.903,1.085) (-1.846,1.071) (-1.790,1.057) (-1.733,1.043) (-1.678,1.028) (-1.622,1.013) (-1.567,0.997) (-1.511,0.981) (-1.456,0.966) (-1.401,0.950) (-1.346,0.934) (-1.291,0.918) (-1.236,0.902) (-1.180,0.886) (-1.125,0.870) (-1.070,0.855) (-1.014,0.840) (-0.958,0.825) (-0.902,0.810) (-0.845,0.796) (-0.789,0.782) (-0.732,0.769) (-0.674,0.756) (-0.617,0.744) (-0.559,0.732) (-0.500,0.722) (-0.441,0.711) (-0.382,0.702) (-0.323,0.694) (-0.263,0.686) (-0.202,0.680) (-0.142,0.674) (-0.081,0.670) (-0.020,0.666) (0.041,0.664) (0.102,0.663) (0.163,0.663) (0.224,0.664) (0.285,0.667) (0.346,0.670) (0.407,0.675) (0.467,0.681) (0.527,0.688) (0.587,0.696) (0.646,0.705) (0.705,0.715) (0.764,0.726) (0.822,0.737) (0.879,0.750) (0.937,0.763) (0.993,0.777) (1.049,0.791) (1.105,0.806) (1.161,0.822) (1.216,0.838) (1.270,0.855) (1.324,0.872) (1.378,0.889) (1.432,0.907) (1.485,0.925) (1.538,0.944) (1.590,0.962) (1.642,0.981) (1.694,1.001) (1.746,1.020) (1.798,1.040) (1.849,1.060) (1.900,1.080) (1.951,1.101) (2.002,1.121) (2.052,1.142) (2.102,1.163) (2.153,1.184) (2.203,1.205) (2.253,1.226) (2.302,1.248) (2.352,1.269) (2.401,1.291) (2.451,1.313) (2.500,1.334)};
\addplot[c1,dashed,domain=0:2.500,samples=2] {0.5*x};
\addplot[c2,thick] coordinates {(-1.366,1.154) (-1.357,1.112) (-1.340,1.071) (-1.318,1.030) (-1.293,0.990) (-1.265,0.951) (-1.235,0.913) (-1.202,0.875) (-1.168,0.838) (-1.131,0.801) (-1.092,0.765) (-1.052,0.730) (-1.010,0.696) (-0.965,0.663) (-0.919,0.630) (-0.870,0.599) (-0.820,0.569) (-0.767,0.541) (-0.711,0.515) (-0.653,0.490) (-0.592,0.468) (-0.528,0.450) (-0.462,0.435) (-0.393,0.424) (-0.323,0.418) (-0.252,0.417) (-0.181,0.422) (-0.112,0.431) (-0.044,0.444) (0.021,0.461) (0.083,0.482) (0.143,0.504) (0.201,0.529) (0.257,0.556) (0.310,0.584) (0.362,0.613) (0.412,0.643) (0.461,0.674) (0.509,0.705) (0.556,0.737) (0.602,0.770) (0.647,0.803) (0.691,0.836) (0.734,0.870) (0.777,0.904) (0.819,0.938) (0.861,0.973) (0.903,1.007) (0.944,1.042) (0.984,1.077) (1.025,1.112) (1.065,1.147) (1.104,1.183) (1.144,1.218) (1.183,1.253) (1.222,1.289) (1.261,1.325) (1.300,1.360) (1.339,1.396) (1.377,1.432) (1.416,1.468) (1.454,1.504) (1.492,1.540) (1.530,1.576) (1.568,1.612) (1.606,1.648) (1.644,1.684) (1.681,1.720) (1.719,1.756) (1.756,1.792) (1.794,1.828) (1.831,1.865) (1.869,1.901) (1.906,1.937) (1.944,1.973) (1.981,2.010) (2.018,2.046) (2.055,2.082) (2.093,2.118) (2.130,2.155) (2.167,2.191) (2.204,2.227) (2.241,2.264) (2.278,2.300) (2.315,2.336) (2.352,2.373) (2.389,2.409) (2.426,2.446) (2.463,2.482) (2.500,2.518)};
\addplot[c2,dashed,domain=0:2.500,samples=2] {1.0*x};
\addplot[c3,thick] coordinates {(-0.882,0.777) (-0.869,0.711) (-0.847,0.646) (-0.818,0.582) (-0.784,0.518) (-0.745,0.456) (-0.701,0.395) (-0.650,0.336) (-0.592,0.279) (-0.524,0.226) (-0.441,0.182) (-0.338,0.159) (-0.230,0.170) (-0.139,0.207) (-0.064,0.256) (-0.001,0.311) (0.054,0.369) (0.105,0.428) (0.151,0.489) (0.195,0.550) (0.237,0.612) (0.277,0.674) (0.316,0.736) (0.353,0.799) (0.390,0.862) (0.426,0.925) (0.461,0.988) (0.496,1.051) (0.531,1.115) (0.565,1.178) (0.599,1.241) (0.632,1.305) (0.666,1.368) (0.699,1.432) (0.732,1.496) (0.765,1.559) (0.798,1.623) (0.831,1.686) (0.863,1.750) (0.896,1.814) (0.929,1.878) (0.961,1.941) (0.993,2.005) (1.026,2.069) (1.058,2.132) (1.091,2.196) (1.123,2.260) (1.155,2.324) (1.187,2.387) (1.219,2.451) (1.252,2.515) (1.284,2.579) (1.316,2.642) (1.348,2.706) (1.380,2.770) (1.412,2.834) (1.444,2.898) (1.476,2.961) (1.509,3.025)};
\addplot[c3,dashed,domain=0:1.500,samples=2] {2.0*x};
\draw[gray!70] (axis cs:0,0) -- (axis cs:0,3);
\end{groupplot}
\end{tikzpicture}
\caption{Numerical approximations of the rotational profiles $u_\sigma$
for $n=3$, $\kappa=1$ and $\sigma=0.5,1,2$, for $r=1$ ($c=-1$, left) and
$r=2$ ($c=-2$, right). Solid curves: computed profiles, drawn until
$|u_\sigma'|=60$; dashed lines: the cones $u=\sigma x$, $x>0$. The lower
row enlarges $-2.5\le x\le2.5$. The profiles were computed from terminal
data $u=\sigma x+c_1x^{-r}$, $u'=\sigma-rc_1x^{-r-1}$ at $x=30$ with an
implicit Runge--Kutta (Radau) method, relative tolerance $10^{-11}$.}
\label{fig:rotational-profiles}
\end{figure}

\textbf{Organization.}
Although Theorem~\ref{thm:nr-circular} is stated first, its
proof in Section~\ref{sec:nonrotational} is independent of the rotational
theory, which is developed first because it is elementary and provides the
model ends.
In Section~\ref{sec:rotational} we derive the profile equation, prove the
admissibility criterion, and treat the case $r=n$.
Section~\ref{sec:construction} constructs the profile by finite-radius
approximation, and Section~\ref{sec:uniqueness} proves rotational
uniqueness and an explicit eventual ordering. Section~\ref{sec:expansion}
establishes the complete asymptotic expansion, and
Section~\ref{sec:analytic} proves analytic dependence on the slope and the
exterior foliation. Section~\ref{sec:nonrotational} contains the proof of
Theorem~\ref{thm:nr-circular} and its extension to cones whose links have
constant principal curvatures. Section~\ref{sec:geometry} collects
geometric consequences, including the maximal graphical continuation of
the profile, and Section~\ref{sec:finer} discusses open problems.

\section{Rotational geometry and the profile equation}\label{sec:rotational}

Let $O(n)$ act on $\R^{n+1}=\R\times\R^n$ in the standard way on the second
factor.  A rotational hypersurface is parametrized by
\begin{equation*}
X(t,\omega)=(x(t),y(t)\omega),\qquad \omega\in\Sph^{n-1},
\end{equation*}
where $y>0$.  Put
\[
W=\sqrt{(x')^2+(y')^2}
\]
and choose the unit normal
\begin{equation*}
N=\frac1W(y',-x'\omega).
\end{equation*}
The meridian principal curvature and the transversal principal curvature are
\begin{equation*}
\mu=\frac{y'x''-y''x'}{W^3},
\qquad
\lambda=\frac{x'}{yW},
\end{equation*}
with $\lambda$ of multiplicity $n-1$.  Moreover,
\begin{equation*}
\ip{X}{N}=\frac{xy'-yx'}{W}.
\end{equation*}
Thus
\begin{equation}\label{eq:Sr}
S_r=\binom{n-1}{r-1}\mu\lambda^{r-1}
     +\binom{n-1}{r}\lambda^r.
\end{equation}

We henceforth use a graphical profile
\[
x(t)=t,\qquad y(t)=u(t)>0.
\]
Then $W=\sqrt{1+(u')^2}$ and
\begin{equation}\label{eq:curv-graph}
\mu=-\frac{u''}{W^3},
\qquad
\lambda=\frac1{uW},
\qquad
\ip{X}{N}=\frac{xu'-u}{W}.
\end{equation}
As in the introduction, put
\begin{equation*}
\Psi=xu'-u,\qquad m=uW,
\qquad
\kappa=\frac{-c}{\binom{n-1}{r-1}}>0.
\end{equation*}
Using
\(
\binom{n-1}{r}=\frac{n-r}{r}\binom{n-1}{r-1},
\)
equations \eqref{eq:shrinker} and \eqref{eq:Sr} become
\begin{equation}\label{eq:profile}
u''=\left(\frac{n-r}{ru}+\kappa m^{r-1}\Psi\right)
       \bigl(1+(u')^2\bigr).
\end{equation}
This is the profile equation \eqref{eq:profile-intro} of the introduction.

\begin{proposition}\label{prop:admissibility}
Let $u>0$ solve \eqref{eq:profile}.  At every point at which $\Psi<0$, the
associated rotational hypersurface has a principal-curvature vector in
$\Ga_r$.  Conversely, if $\lambda>0$, $r$-admissibility is equivalent to
$\Psi<0$.
\end{proposition}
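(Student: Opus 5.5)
The plan is to reduce membership in $\Ga_r$ to a single inequality between $\mu$ and $\lambda$, and then read that inequality off the profile equation. Here $\lambda=1/(uW)>0$ automatically, since $u>0$, so the condition ``$\lambda>0$'' in the converse always holds for graphs.

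\textbf{Reduction for the vector $(\mu,\lambda,\ldots,\lambda)$.} For this vector, with $\lambda$ of multiplicity $n-1$ and $\lambda>0$, and any $1\le k\le n-1$, we have
\[
S_k=\binom{n-1}{k-1}\lambda^{k-1}\Bigl(\mu+\frac{n-k}{k}\lambda\Bigr),
\]
as in \eqref{eq:Sr}. Hence $S_k>0$ if and only if $\mu>-\frac{n-k}{k}\lambda$. The thresholds $\frac{n-k}{k}$ decrease in $k$. Therefore $\mu>-\frac{n-r}{r}\lambda$ implies $\mu>-\frac{n-k}{k}\lambda$ for every $k\le r$, and so $S_1,\dots,S_r>0$. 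Conversely, membership in $\Ga_r$ requires $S_r>0$, which is exactly this inequality. So $r$-admissibility is equivalent to $\mu+\frac{n-r}{r}\lambda>0$.

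\textbf{Link with $\Psi$.} By \eqref{eq:curv-graph} and \eqref{eq:profile},
\[
\mu=-\frac{u''}{W^3}=-\frac{1}{W}\Bigl(\frac{n-r}{ru}+\kappa m^{r-1}\Psi\Bigr)
=-\frac{n-r}{r}\lambda-\frac{\kappa m^{r-1}\Psi}{W}.
\]
It follows that
\[
\mu+\frac{n-r}{r}\lambda=-\frac{\kappa m^{r-1}}{W}\Psi .
\]
Since $\kappa>0$, $m=uW>0$ and $W>0$, the sign of the left side is the opposite of the sign of $\Psi$. Thus $\Psi<0$ gives admissibility, and conversely admissibility (with $\lambda>0$) forces $\Psi<0$.

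An equivalent check is that $S_r=c\langle X,N\rangle=c\Psi/W$ with $c<0$, so $S_r>0$ if and only if $\Psi<0$; this handles the top inequality directly.

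There is no real obstacle. The only point needing care is the monotonicity step, which shows that $S_r>0$ already forces the lower $S_k>0$; this uses $\lambda>0$.
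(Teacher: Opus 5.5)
Your proof is correct and is essentially the paper's argument. You use the factorization $S_j=\binom{n-1}{j-1}\lambda^{j-1}\bigl(\mu+\frac{n-j}{j}\lambda\bigr)$ with decreasing thresholds to reduce $r$-admissibility to $\mu>-\frac{n-r}{r}\lambda$, and then use the profile equation to write $\mu+\frac{n-r}{r}\lambda=-\kappa m^{r-1}\Psi/W$; one small note is that this factorization also holds for $k=n$ (where $\binom{n-1}{n}=0$), so you need not restrict to $k\le n-1$.
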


\begin{proof}
For $1\le j\le r$, equation \eqref{eq:Sr} with $r$ replaced by $j$ gives
\[
S_j=\binom{n-1}{j-1}\lambda^{j-1}
\left(\mu+\frac{n-j}{j}\lambda\right).
\]
Since $j\mapsto(n-j)/j$ is decreasing and $\lambda>0$, all the inequalities
$S_1>0,\ldots,S_r>0$ are equivalent to
\[
\mu> -\frac{n-r}{r}\lambda.
\]
Set
\[
\Phi=\frac{n-r}{ru}+\kappa m^{r-1}\Psi.
\]
By \eqref{eq:profile} and \eqref{eq:curv-graph},
\[
\mu=-\frac{\Phi}{W},
\qquad
\frac{n-r}{r}\lambda=\frac{n-r}{ruW}.
\]
Hence,
\[
\mu+\frac{n-r}{r}\lambda
=-\frac{\kappa m^{r-1}\Psi}{W},
\]
which is positive exactly when $\Psi<0$.
\end{proof}

The following proposition describes the case $r=n$ and explains the
restriction in Theorem~\ref{thm:main}.

\begin{proposition}\label{prop:gauss-endpoint}
Let $c<0$ and suppose that a positive graphical solution of the rotational
$S_n$-shrinker equation is defined on a right half-line and satisfies
$u(x)/x\to\sigma>0$; set $\kappa=-c>0$.  Then $u(x)=\sigma x$.
Therefore, the associated
cone has $S_n=0$, and its curvature vector lies on the boundary of $\Gamma_n$; in particular, no
strictly $n$-admissible rotational graphical end can have a positive conical
asymptote.
\end{proposition}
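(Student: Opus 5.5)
The plan is to specialize the rotational reduction of Section~\ref{sec:rotational} to $r=n$ and observe that the zeroth-order term of the profile equation disappears. For $r=n$ we have $\binom{n-1}{n-1}=1$ and $\binom{n-1}{n}=0$, so \eqref{eq:Sr} reduces to $S_n=\mu\lambda^{n-1}$. The same computation that produced \eqref{eq:profile}, now with $\kappa=-c$, then gives
\[
u''=\kappa\, m^{n-1}\Psi\,\bigl(1+(u')^2\bigr).
\]
I will check this directly from \eqref{eq:curv-graph}, since \eqref{eq:profile} was derived for $r\le n$ only through the identity $\binom{n-1}{r}=\frac{n-r}{r}\binom{n-1}{r-1}$, which still holds at $r=n$. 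The term $\frac{n-r}{ru}$ is exactly what vanishes.

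The key observation is that $\Psi'=xu''$. Hence $\Psi$ satisfies the linear homogeneous first-order equation
\[
\Psi'=a(x)\Psi,\qquad a(x)=\kappa\,x\,m^{n-1}\bigl(1+(u')^2\bigr).
\]
For $x>0$ we have $a(x)>0$, so $\Psi(x)=\Psi(x_0)\exp\bigl(\int_{x_0}^x a\bigr)$. In particular, $\Psi$ either vanishes identically on the half-line or never vanishes there.

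Suppose $\Psi(x_0)\ne0$ for some large $x_0$. Since $u(x)/x\to\sigma$, we have $u\ge\sigma x/2$ for $x$ large, and $m\ge u$. This gives $a(x)\ge \kappa(\sigma/2)^{n-1}x^{n}$, so $|\Psi(x)|\ge|\Psi(x_0)|\exp\bigl(C x^{n+1}-C'\bigr)$. On the other hand, $(u/x)'=\Psi/x^2$. The existence of the limit $u/x\to\sigma$ forces $\int_{x_0}^\infty \Psi(t)t^{-2}\,dt$ to converge, and since $\Psi$ has constant sign this requires $\Psi/t^2$ to be integrable. That is impossible with super-exponential growth. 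Hence $\Psi\equiv0$ on the half-line, so $(u/x)'\equiv0$, $u=ax$, and the limit gives $a=\sigma$.

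For the remaining assertions: on $u=\sigma x$ we have $u''=0$, so $\mu=0$, while $\lambda>0$. Thus $S_n=\mu\lambda^{n-1}=0$, and $S_j>0$ for $j<n$ by the formula in the proof of Proposition~\ref{prop:admissibility}. The curvature vector therefore lies on $\partial\Ga_n$ and not in $\Ga_n$. Any rotational graphical $S_n$-shrinking end with a positive conical asymptote is itself this cone, so it is not strictly $n$-admissible. The only step needing care is the integrability argument: I must use the one-signedness of $\Psi$, which follows from the linear ODE, so that convergence of $u/x$ genuinely forces integrability of $\Psi/t^2$. Everything else is routine.
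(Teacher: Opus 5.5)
Your proposal is correct and follows the paper's proof essentially step by step. The common route is: reduce to the linear equation $\Psi'=\mathcal A\Psi$ with $\mathcal A\ge\kappa(\sigma/2)^{n-1}x^n$ for large $x$; a nonzero, one-signed $\Psi$ would grow super-exponentially, contradicting the convergence of $u/x$ through $(u/x)'=\Psi/x^2$; hence $u=\sigma x$, $\mu=0$ and $S_n=0$. Your extra check that $S_j>0$ for $j<n$ on the cone slightly strengthens the paper's conclusion: the paper only states that the curvature vector lies outside the open cone, whereas your check gives the boundary assertion made in the statement.
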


\begin{proof}
For $r=n$ the first term in \eqref{eq:profile} vanishes and, with
$W=(1+(u')^2)^{1/2}$ and $m=uW$,
\[
u''=\kappa m^{n-1}W^2\Psi,
\qquad
\Psi'=xu''=\mathcal A(x)\Psi,
\qquad
\mathcal A(x):=\kappa x m^{n-1}W^2>0.
\]
Hence, for any $x\ge x_0$ in the interval of definition,
\[
\Psi(x)=\Psi(x_0)\exp\left(\int_{x_0}^x\mathcal A(s)\,\dd s\right).
\]
Since $u(x)/x\to\sigma$, one has $u(x)\ge(\sigma/2)x$ for all sufficiently
large $x$.  Thus
\[
\mathcal A(x)\ge\kappa x u(x)^{n-1}
\ge\kappa(\sigma/2)^{n-1}x^n.
\]
If $\Psi(x_0)\ne0$, then $\Psi$ has a fixed sign and grows in modulus at
least like $\exp(Cx^{n+1})$.  Because
$(u/x)'=\Psi/x^2$, the integral of $(u/x)'$ on a right half-line would
diverge, contradicting the finite limit $u(x)/x\to\sigma$.  Therefore
$\Psi\equiv0$, whence $(u/x)'=0$ and $u=\sigma x$.  Its meridian curvature
is zero, so $S_n=0$ and the principal-curvature vector is not in the open
G\aa rding cone $\Gamma_n$.
\end{proof}

\begin{remark}
For $r=n$ the speed is the Gauss--Kronecker curvature.
Proposition~\ref{prop:gauss-endpoint} is consistent with Urbas'
theorem that a complete noncompact convex contracting self-similar
solution of the $K^\alpha$-flow is a hyperplane
\cite[Theorem~2.1, p.~255]{Urbas1998}. Neither statement contains the
other: Proposition~\ref{prop:gauss-endpoint} assumes rotational symmetry
and graphicality of an end, but neither convexity nor completeness.
\end{remark}

\section{Construction and quantitative estimates}\label{sec:construction}

\subsection{Variation of parameters}
The construction uses the following integral representation.

\begin{lemma}\label{lem:variation}
Let $v$ solve
\begin{equation}\label{eq:linear}
v''+f(x)(v-xv')=g(x)
\end{equation}
on $(d,a)$.  Then
\begin{align}
v(x)={}&\frac{v(a)}a x
-\bigl(av'(a)-v(a)\bigr)x
 \int_x^a\frac1{s^2}
 \exp\left(-\int_s^a zf(z)\dd z\right)\dd s\notag\\
&+x\int_x^a\frac1{t^2}\int_t^a
sg(s)\exp\left(-\int_t^s zf(z)\dd z\right)\dd s\dd t.
\label{eq:variation}
\end{align}
\end{lemma}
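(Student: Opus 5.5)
The plan is to reduce the second-order equation to a first-order linear equation for the angular defect $w:=xv'-v$, solve that equation by an integrating factor, and then recover $v$ from $w$ through the identity $(v/x)'=w/x^2$. Throughout we assume $0<d<a$, or at least that $(d,a)$ does not contain $0$, so that $x$ and $s$ never vanish.

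First, $w'=xv''$. Since $v-xv'=-w$, equation \eqref{eq:linear} gives
\[
w'=x\bigl(g+f(x)w\bigr),
\]
that is,
\[
w'-xf(x)w=xg(x).
\]
Multiplying by the integrating factor $\exp\left(\int_x^a zf(z)\dd z\right)$, whose derivative is $-xf(x)$ times itself, gives
\[
\frac{d}{dx}\left[w(x)\exp\left(\int_x^a zf\right)\right]=xg(x)\exp\left(\int_x^a zf\right).
\]
Integrate this from $t$ to $a$ and rearrange. Using $\int_t^a-\int_s^a=\int_t^s$, we obtain
\[
w(t)=w(a)\exp\left(-\int_t^a zf\right)-\int_t^a sg(s)\exp\left(-\int_t^s zf\right)\dd s,
\]
where $w(a)=av'(a)-v(a)$.

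Second, integrate the identity $(v/x)'=w/x^2$ from $x$ to $a$:
\[
\frac{v(x)}{x}=\frac{v(a)}{a}-\int_x^a\frac{w(t)}{t^2}\dd t.
\]
Multiply by $x$ and substitute the formula for $w(t)$. The term containing $w(a)$ produces the second line of \eqref{eq:variation}, after renaming the integration variable $t$ as $s$. The double integral enters with a double minus sign, and so produces the last term with a plus sign.

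No step is a real obstacle. The only points needing care are the sign bookkeeping in the exponents and the requirement $0\notin(d,a)$, which the later application, with $d>0$, satisfies. If one also wants the formula to hold at $x=a$, or as $x\to d$, this follows from continuity and the $C^2$ regularity of $v$ up to $a$, which is implicit in the use of $v(a)$ and $v'(a)$.
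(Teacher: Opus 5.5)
Your proof is correct and is essentially the paper's argument. The paper writes $v=xy$ and solves the first-order linear equation for $y'=(v/x)'$ with integrating factor $x^2\exp\bigl(-\int^x zf(z)\,dz\bigr)$, then integrates once more; your defect $w=xv'-v$ is exactly $x^2y'$, so your integrating factor $\exp\bigl(\int_x^a zf(z)\,dz\bigr)$ is the same one with the $x^2$ absorbed, and your final integration of $(v/x)'=w/x^2$ is the same second step. You are also right that the statement implicitly requires $0\notin[x,a]$ and $v\in C^1$ up to $a$.
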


\begin{proof}
Write $v=xy$.  Then
\[
xy''+(2-x^2f)y'=g.
\]
The equation for $w=y'$ is linear.  Multiplying it by the integrating factor
\[
x^2\exp\left(-\int^x zf(z)\dd z\right)
\]
and integrating first from $x$ to $a$ and then once more yields
\eqref{eq:variation}.
\end{proof}

For a fixed solution $u$ of \eqref{eq:profile}, equation \eqref{eq:linear}
holds with
\begin{equation}\label{eq:fg}
f=\kappa m^{r-1}\bigl(1+(u')^2\bigr),
\qquad
g=\frac{n-r}{r}\frac{1+(u')^2}{u}.
\end{equation}

\subsection{Finite-radius profiles and uniform estimates}

Fix $\sigma>0$ and set $a_0=(n-r)/r$. For every $a>0$, let $u_{\sigma,a}$ be the solution of
\eqref{eq:profile} with terminal data
\begin{equation*}
u_{\sigma,a}(a)=\sigma a,
\qquad
u_{\sigma,a}'(a)=\sigma.
\end{equation*}
The terminal term involving $av'(a)-v(a)$ in \eqref{eq:variation} vanishes.
For a positive function $v$, set
\[
\mathcal E_v(t,s)=
\exp\left(-\kappa\int_t^s
z\bigl(v(z)\sqrt{1+(v'(z))^2}\bigr)^{r-1}
\bigl(1+(v'(z))^2\bigr)\dd z\right).
\]
Hence,
\begin{align}
u_{\sigma,a}(x)=\sigma x+\frac{n-r}{r}x
\int_x^a\frac1{t^2}\int_t^a
\frac{s(1+(u_{\sigma,a}'(s))^2)}{u_{\sigma,a}(s)}
\mathcal E_{u_{\sigma,a}}(t,s)
\dd s\dd t.
\label{eq:finite-integral}
\end{align}

\begin{lemma}\label{lem:finite}
For every $a>0$, $u_{\sigma,a}$ extends to $(0,a]$.  Moreover,
\[
\Psi_{\sigma,a}<0\quad\text{on }(0,a)
\]
and, for $0<x\le a$,
\begin{align}
\sigma x\le u_{\sigma,a}(x)
&\le\sigma x+\frac{n-r}{2r\kappa\sigma^r}x^{-r},
\label{eq:finite0}\\
|u_{\sigma,a}'(x)-\sigma|
&\le\frac{3(n-r)}{2r\kappa\sigma^r}x^{-(r+1)}.
\label{eq:finite1}
\end{align}
\end{lemma}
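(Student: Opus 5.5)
Let $(x_*,a]$ be the maximal interval of existence (to the left) of $u_{\sigma,a}$ on which $u_{\sigma,a}>0$. We work on $(x_*,a]$ with the representation \eqref{eq:finite-integral} and argue by continuity: all claimed bounds are first established on this interval, and the a priori bounds are then used to show $x_*\le 0$.

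\emph{Sign of $\Psi$ and the lower bound.} Differentiate \eqref{eq:finite-integral}, or equivalently use $\Psi'=xu''$ together with \eqref{eq:profile} written as $\Psi'=x f\Psi+xg$, where $f,g$ are as in \eqref{eq:fg}. This gives
\[
\Psi_{\sigma,a}(x)=-\int_x^a s g(s)\,\mathcal E_{u_{\sigma,a}}(x,s)\dd s ,
\]
because $\Psi(a)=0$. Since $g>0$ wherever $u>0$, we get $\Psi<0$ on $(x_*,a)$. Then $(u/x)'=\Psi/x^2<0$, so $u(x)/x>\sigma$ for $x<a$. This yields the lower bound in \eqref{eq:finite0}, and in particular $u\ge\sigma x>0$ for every $x>0$.

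\emph{Upper bounds.} Bound the inner integral using $u\ge\sigma s$ and $W\ge1$, so that $m^{r-1}W^2\ge(\sigma s)^{r-1}(1+(u')^2)$. The exponent in $\mathcal E$ then dominates $\kappa\int_t^s z^r\sigma^{r-1}(1+(u')^2)\dd z$. The factor $1+(u')^2$ also appears in $g$, so the quantity
\[
\frac{s(1+(u')^2)}{u}\,\mathcal E\le \frac{1+(u')^2}{\sigma}\,\mathcal E
\]
is the integrand of a derivative of an exponential, up to the weight $z^r$. Concretely, for $s\ge t$ we have $z^r\ge t^r$, which gives
\[
\int_t^a \frac{(1+(u')^2)}{\sigma}\,\mathcal E(t,s)\dd s\le \frac{1}{\sigma}\cdot\frac{1}{\kappa\sigma^{r-1}t^r}=\frac{1}{\kappa\sigma^r t^r}.
\]
It follows that $|\Psi(x)|=x^2|(u/x)'|\le \frac{n-r}{r\kappa\sigma^r}x^{-r}$, which is the bound matching \eqref{eq:mainPsi}. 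Integrating $(u/x)'$ from $x$ to $a$ gives
\[
\frac{u}{x}-\sigma\le \frac{n-r}{r\kappa\sigma^r}\int_x^\infty t^{-r-2}\dd t=\frac{n-r}{r(r+1)\kappa\sigma^r}x^{-r-1},
\]
and $1/(r+1)\le 1/2$ gives the upper bound in \eqref{eq:finite0}. For \eqref{eq:finite1}, write $u'-\sigma=(u/x-\sigma)+\Psi/x$. The first term is nonnegative and at most $\tfrac12 Kx^{-r-1}$, and the second lies in $[-Kx^{-r-1},0]$, where $K=\frac{n-r}{r\kappa\sigma^r}$. Hence $|u'-\sigma|\le Kx^{-r-1}\le\frac32Kx^{-r-1}$.

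\emph{Extension to $(0,a]$.} The right-hand side of \eqref{eq:profile} is smooth on $\{u>0\}$. On any compact $[\varepsilon,a]$ with $\varepsilon>x_*$, the bounds above keep $u$ between $\sigma x$ and $\sigma x+Cx^{-r}$ and keep $u'$ bounded, uniformly up to $x_*$ if $x_*>0$. The solution therefore cannot blow up or reach $u=0$ at any $x_*>0$, which contradicts maximality. Hence the solution exists on $(0,a]$.

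The main obstacle is the exponential estimate. The factor $(1+(u')^2)$ in $g$ is not a priori bounded, so it must be absorbed exactly by the matching factor in the exponent of $\mathcal E$, as done above, rather than estimated separately. This requires some care because $W^{r-1}\ge1$ is used only in the exponent.
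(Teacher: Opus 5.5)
Your proof is correct and follows the paper's argument: the same integral representation, the same absorption of the factor $1+(u')^2$ into the exponential kernel, and the same continuation argument. Your exact formula $\Psi_{\sigma,a}(x)=-\int_x^a s\,g(s)\,\mathcal E_{u_{\sigma,a}}(x,s)\,ds$ replaces the paper's largest-zero argument for the sign, and splitting $u'-\sigma=(u/x-\sigma)+\Psi/x$ into two terms of opposite sign improves the factor $\tfrac32$ in \eqref{eq:finite1} to $1$. One small correction: since $s\,g(s)$ has the sign of $s$, the formula gives $\Psi<0$ only for $x>0$, not on all of $(x_*,a)$ when $x_*<0$; this is harmless, because $(0,a)$ is all the lemma asserts and all you use.
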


\begin{proof}
At $x=a$, $\Psi_{\sigma,a}(a)=0$ and
\[
\Psi_{\sigma,a}'(a)=a u_{\sigma,a}''(a)>0.
\]
Thus $\Psi_{\sigma,a}<0$ immediately to the left of $a$.  If it had a largest
zero $x_1<a$, then $\Psi'(x_1)>0$, forcing $\Psi>0$ immediately to the right
of $x_1$, a contradiction.  Therefore $\Psi_{\sigma,a}<0$ on the maximal
left interval.  Hence $(u_{\sigma,a}/x)'<0$ and
$u_{\sigma,a}(x)\ge\sigma x$.

Fix $x$ in the maximal interval.  For $z\in[x,a]$,
\[
m_{\sigma,a}(z)\ge u_{\sigma,a}(z)\ge\sigma z\ge\sigma x.
\]
Set
\[
\theta'(z)=\kappa(\sigma x)^{r-1}z
\bigl(1+(u_{\sigma,a}'(z))^2\bigr).
\]
For $x\le t\le s\le a$ the integrand of the inner integral in
\eqref{eq:finite-integral} equals
\[
\frac{\theta'(s)}{\kappa(\sigma x)^{r-1}u_{\sigma,a}(s)}\,
\mathcal E_{u_{\sigma,a}}(t,s)
\le
\frac{\theta'(s)}{\kappa\sigma^r x^{r-1}t}\,
\exp\left(-\int_t^s\theta'(z)\dd z\right),
\]
because $u_{\sigma,a}(s)\ge\sigma s\ge\sigma t$ and, since
$m_{\sigma,a}(z)\ge\sigma x$, the exponent of
$\mathcal E_{u_{\sigma,a}}(t,s)$ is at most $-\int_t^s\theta'$.
Since
\[
\int_t^a\theta'(s)
\exp\left(-\int_t^s\theta'(z)\dd z\right)\dd s\le1,
\]
the inner integral is bounded by $1/(\kappa\sigma^r x^{r-1}t)$.  Integrating $t^{-3}$ gives
\eqref{eq:finite0}.  To record the derivative estimate and
its constant, write
\[
I_a(t):=\int_t^a
\frac{s(1+(u'_{\sigma,a}(s))^2)}{u_{\sigma,a}(s)}
\mathcal E_{u_{\sigma,a}}(t,s)\,\dd s.
\]
The preceding comparison, now made with base point $t$, gives
\begin{equation}\label{eq:Ia-bound}
0\le I_a(t)\le \frac1{\kappa\sigma^r t^r}
\qquad(0<t\le a).
\end{equation}
Formula \eqref{eq:finite-integral} becomes
$u_{\sigma,a}(x)-\sigma x=a_0x\int_x^a t^{-2}I_a(t)\,\dd t$.
Leibniz' rule therefore gives, including the lower-limit boundary term,
\[
u'_{\sigma,a}(x)-\sigma
=a_0\int_x^a t^{-2}I_a(t)\,\dd t-a_0\frac{I_a(x)}x.
\]
Using \eqref{eq:Ia-bound} separately in the integral and boundary terms,
\begin{align*}
|u'_{\sigma,a}(x)-\sigma|
&\le \frac{a_0}{\kappa\sigma^r}
\left(\int_x^\infty t^{-r-2}\,\dd t+x^{-r-1}\right)\\
&=\frac{a_0}{\kappa\sigma^r}
\left(1+\frac1{r+1}\right)x^{-r-1}
\le \frac{3a_0}{2\kappa\sigma^r}x^{-r-1},
\end{align*}
because $r\ge1$.  Since $a_0=(n-r)/r$, this is exactly
\eqref{eq:finite1}.

If the maximal left endpoint were $x_{\min}>0$, estimates
\eqref{eq:finite0}--\eqref{eq:finite1} would keep
$(u_{\sigma,a},u_{\sigma,a}')$ in a compact subset of
$(0,\infty)\times\R$.  The standard continuation theorem for ODEs would then
extend the solution past $x_{\min}$, a contradiction.
\end{proof}

\begin{remark}
Lemma~\ref{lem:finite} only asserts existence on $(0,a]$. Since the
right-hand side of \eqref{eq:profile} is smooth on $\{u>0\}$ for all
$x\in\R$, the terminal solutions, and the limit profile constructed
below, may continue to negative values of $x$; see
Proposition~\ref{prop:maximal-graph}. The sign argument for
$\Psi_{\sigma,a}$ uses $x>0$: at a zero of $\Psi$ one has
$\Psi'=xu''$, whose sign is that of $x$.
\end{remark}

\begin{proof}[Proof of Theorem~\ref{thm:main}: existence and estimates]
On every compact interval $[\varepsilon,E]\Subset(0,\infty)$, Lemma
\ref{lem:finite} gives uniform $C^1$ bounds for
$\{u_{\sigma,a}:a\ge E\}$ and a uniform positive lower bound for the
functions.  Since the right-hand side of \eqref{eq:profile} is smooth on
$(0,\infty)\times(0,\infty)\times\R$, the ODE gives uniform $C^k$ bounds on
each compact interval.  A diagonal Arzel\`a--Ascoli argument therefore
produces a sequence $a_j\to\infty$ for which
\[
u_{\sigma,a_j}\longrightarrow u_\sigma
\quad\text{in }C^\infty_{\mathrm{loc}}((0,\infty)).
\]
The limit solves \eqref{eq:profile} and inherits
\eqref{eq:main0}--\eqref{eq:main1}.

Passing to the limit in \eqref{eq:finite-integral} gives
\begin{align}
u_\sigma(x)=\sigma x+\frac{n-r}{r}x
\int_x^\infty\frac1{t^2}\int_t^\infty
\frac{s(1+(u_\sigma'(s))^2)}{u_\sigma(s)}
\mathcal E_{u_\sigma}(t,s)
\dd s\dd t.
\label{eq:global-integral}
\end{align}

We justify the passage to the limit by dominated convergence.  Fix $x>0$ and extend the
integrand for the terminal problem by zero when $s>a$.  Lemma~\ref{lem:finite}
gives, for every $a\ge x$ and $s\ge x$,
\[
u_{\sigma,a}(s)\ge\sigma s,
\qquad |u'_{\sigma,a}(s)|\le
P_x:=\sigma+\frac{3(n-r)}{2r\kappa\sigma^r}x^{-r-1}.
\]
For $x\le t\le s\le a$, the exponent in the kernel satisfies
\begin{align*}
\kappa\int_t^s z m_{\sigma,a}(z)^{r-1}
(1+(u'_{\sigma,a}(z))^2)\,\dd z
&\ge \kappa\sigma^{r-1}\int_t^s z^r\,\dd z\\
&\ge C_x(s^2-t^2),
\end{align*}
where
\[
C_x:=\frac{\kappa\sigma^{r-1}x^{r-1}}2>0.
\]
Also
\[
0\le \frac{s(1+(u'_{\sigma,a}(s))^2)}{u_{\sigma,a}(s)}
\le D_x:=\frac{1+P_x^2}{\sigma}.
\]
Thus the absolute value of the double-integral density is bounded,
independently of $a$, by
\[
D_x t^{-2}e^{-C_x(s^2-t^2)}\mathbf 1_{\{x\le t\le s\}}.
\]
This function is integrable: since $s^2-t^2\ge t(s-t)$,
\[
\int_x^\infty t^{-2}\int_t^\infty
e^{-C_x(s^2-t^2)}\,\dd s\,\dd t
\le \frac1{C_x}\int_x^\infty t^{-3}\,\dd t<\infty.
\]
Local smooth convergence supplies pointwise convergence of the integrands.
The dominated convergence theorem on the fixed region
$\{(t,s):x\le t\le s<\infty\}$ therefore proves
\eqref{eq:global-integral}; in particular, the majorant does not depend on
the terminal radius.

Define
\[
J(t)=\int_t^\infty
\frac{s(1+(u_\sigma'(s))^2)}{u_\sigma(s)}
\exp\left(-\kappa\int_t^s
z m_\sigma(z)^{r-1}(1+(u_\sigma'(z))^2)\dd z\right)\dd s.
\]
Differentiating \eqref{eq:global-integral} gives the exact identity
\begin{equation}\label{eq:PsiJ}
\Psi_\sigma(x)=xu_\sigma'(x)-u_\sigma(x)
=-\frac{n-r}{r}J(x)<0.
\end{equation}
The comparison used in Lemma~\ref{lem:finite}, now with base point $x$, yields
\[
J(x)\le\frac1{\kappa\sigma^r x^r},
\]
which proves \eqref{eq:mainPsi}.  Proposition~\ref{prop:admissibility} now
shows that the curvature vector belongs to $\Ga_r$ at every point.  Thus the
elliptic solution generates a homothetically shrinking solution of the
$S_r$-flow.
\end{proof}

\section{Uniqueness for a prescribed conical slope}\label{sec:uniqueness}

Throughout this section, let $1\le r\le n-1$ and set $a_0=(n-r)/r>0$.
The argument follows the integral approach to uniqueness of
Kleene--M\o ller \cite[Section~2, Proposition~1, p.~3951]{KleeneMoller2014},
adapted to the additional dependence of the coefficients on the profile
when $r>1$.
We first show that the convergence of $u(x)/x$ to a positive slope already
forces the decay needed for uniqueness.  Constants in the following lemma
may depend on the solution; the explicit constants in
Theorem~\ref{thm:main} are not needed in this step.

\begin{lemma}\label{lem:automatic-decay}
Let $d\in\R$ and let $u\in C^2((d,\infty))$ be a positive solution of
\eqref{eq:profile} such that $u(x)/x\to\sigma>0$, and put
$d_+=\max\{d,0\}$.  Then
$\Psi=xu'-u<0$ and $u(x)>\sigma x$ on $(d_+,\infty)$.  Moreover,
$u'>0$ for all sufficiently large $x$, and
\begin{equation}\label{eq:automatic-decay}
u-\sigma x=O(x^{-r}),\qquad
u'-\sigma=O(x^{-r-1}),\qquad
\Psi=O(x^{-r})
\end{equation}
as $x\to\infty$.
\end{lemma}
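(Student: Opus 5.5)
The plan is to work with $\Psi=xu'-u$, which satisfies $\Psi'=xu''$ and $(u/x)'=\Psi/x^2$. By \eqref{eq:profile},
\[
\Psi'=x\bigl(1+(u')^2\bigr)\left(\frac{a_0}{u}+\kappa m^{r-1}\Psi\right),\qquad a_0=\frac{n-r}{r}.
\]

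\emph{Step 1: sign of $\Psi$ for $x>d_+$.} At any zero of $\Psi$ with $x>0$ we get $\Psi'=xa_0(1+(u')^2)/u>0$. So, as in Lemma~\ref{lem:finite}, once $\Psi\ge0$ at some $x_0>d_+$, it stays strictly positive on $(x_0,\infty)$ (except possibly at $x_0$ itself). I would then rule this out. On $(x_0+1,\infty)$ we have $\Psi\ge\delta>0$. Since $u\ge(\sigma/2)x$ for large $x$ and $m\ge u$, this gives
\[
\Psi'\ge\kappa x m^{r-1}\Psi\ge\kappa(\sigma/2)^{r-1}x^r\Psi.
\]
Hence $\Psi\ge\delta\exp(cx^{r+1})$, and then $(u/x)'=\Psi/x^2$ is not integrable, which contradicts $u/x\to\sigma$; this is the argument of Proposition~\ref{prop:gauss-endpoint}. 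Therefore $\Psi<0$ on $(d_+,\infty)$. Then $u/x$ is strictly decreasing, and since its limit is $\sigma$, we get $u>\sigma x$ there.

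\emph{Step 2: quantitative decay.} This is the main step. I would redo the integral representation of Lemma~\ref{lem:variation}, but from infinity. Write $\Psi'=\beta(x)+A(x)\Psi$ with
\[
\beta=\frac{xa_0(1+(u')^2)}{u}>0,\qquad A=\kappa x m^{r-1}\bigl(1+(u')^2\bigr)\ge\kappa\sigma^{r-1}x^r .
\]
Integrate $\bigl(\Psi e^{-\int_{x}^{s}A}\bigr)'=\beta e^{-\int_x^s A}$ from $x$ to $b$ and let $b\to\infty$. The boundary term $\Psi(b)e^{-\int_x^bA}$ tends to $0$, because $\Psi$ grows at most polynomially: $|\Psi|\le x|u'|+u$, and $u'$ is controlled as shown next. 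This yields
\[
\Psi(x)=-\int_x^\infty\beta(s)e^{-\int_x^sA}\,ds .
\]

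The subtle point is a priori control of $u'$ before decay is known. For this I would use $(u/x)'<0$, which gives $u'<u/x\le C$, so $u'$ is bounded above. If $u'$ is unbounded below at large $x$, then $u''$ is very negative there. But the equation gives $u''=(1+(u')^2)(a_0/u+\kappa m^{r-1}\Psi)$, and the sign of the bracket is unclear, so this alone does not settle it. Instead, I would note that $\Psi$ is increasing wherever $\Psi\ge -a_0/(\kappa m^{r-1}u)$. More simply, observe that $\Psi<0$ implies $u'<u/x$ anyway. A lower bound on $u'$ then comes from the following: where $u'<0$, the bracket $a_0/u+\kappa m^{r-1}\Psi$ is very negative, which forces $u$ to collapse to $0$ quickly and contradicts $u\ge\sigma x$. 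Alternatively, one can bypass $u'$ entirely: $\beta\le a_0xW^2/u$ and $A\ge\kappa x u^{r-1}W^2$, so $\beta\le (a_0/(\kappa u^r))A\le (a_0/(\kappa\sigma^rs^r))A$. Then
\[
|\Psi(x)|\le\frac{a_0}{\kappa\sigma^rx^r}\int_x^\infty A\,e^{-\int_x^sA}\,ds\le\frac{a_0}{\kappa\sigma^rx^r}.
\]
This is exactly the comparison of Lemma~\ref{lem:finite}. The vanishing of the boundary term can be argued using only the monotonicity of the weighted quantity $\Psi e^{-\int A}$: it is increasing and negative, so it has a limit $L\le0$. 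If $L<0$, then $|\Psi|\ge|L|e^{\int A}$ grows superexponentially, and the non-integrability argument of Step 1 again gives a contradiction.

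\emph{Step 3: consequences.} From $\Psi=O(x^{-r})$, we integrate $(u/x)'=\Psi/x^2$ from $x$ to $\infty$ to get $u/x-\sigma=O(x^{-r-1})$, that is, $u-\sigma x=O(x^{-r})$. Then $u'-\sigma=(\Psi+u-\sigma x)/x=O(x^{-r-1})$, so in particular $u'>0$ for large $x$.
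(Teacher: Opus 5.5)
Your proof is correct. Steps 1 and 3 match the paper, but Step 2 takes a different and shorter route.

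\textbf{The paper's route.} It first gets a priori control of $p=u'$. At a zero of $p$ one has $p'=a_0/u-\kappa u^r<0$ once $u$ is large, so $p$ cannot pass from negative to positive values. Since $u\to\infty$, this forces $0<p<u/x\le M$ on a half-line. That makes $b$ (your $\beta$) bounded and $\Psi=O(x)$, which kills the boundary term against $\mathcal A\ge c_0t^r$. The result is $-\Psi\le B_0/(c_0x^r)$, with constants depending on the solution.

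\textbf{Your route.} You use the pointwise comparison $\beta\le a_0A/(\kappa u^r)$ from Lemma~\ref{lem:finite}, together with the monotonicity of $\Psi(s)e^{-\int_x^sA}$. Its limit must vanish by the same superexponential-growth contradiction as in Step~1. This needs no information about $u'$ at all, and it gives the explicit bound $0<-\Psi\le a_0\kappa^{-1}\sigma^{-r}x^{-r}$ on all of $(d_+,\infty)$. That is the constant of \eqref{eq:mainPsi}, now for every solution rather than only the constructed one. It also yields explicit constants for $u-\sigma x$ and $u'-\sigma$ at least as good as those in \eqref{eq:main0}--\eqref{eq:main1}, and $u'>0$ eventually follows from $u'\to\sigma$.

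\textbf{What each buys.} The paper gets positivity of $u'$ on a half-line directly, before any decay is known, but only with non-explicit constants. Yours gets the decay first, with explicit constants, and deduces positivity of $u'$ from it.

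\textbf{What to cut.} When writing this up, delete the tentative discussion of a lower bound for $u'$; the ``collapse'' argument is not proved. Also delete the claim that $\Psi$ grows polynomially. Your bypass makes both unnecessary.
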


\begin{proof}
Write $p=u'$ and define
\[
\mathcal A(x)=\kappa x u^{r-1}(1+p^2)^{(r+1)/2},\qquad
b(x)=a_0\frac{x(1+p^2)}u.
\]
Since $\Psi'=xu''$, the profile equation gives
\begin{equation}\label{eq:defect-linear-unique}
\Psi'=\mathcal A\Psi+b.
\end{equation}
Here $b>0$ on $(d_+,\infty)$, and $u(x)\ge(\sigma/2)x$ for all
sufficiently large $x$ implies
\[
\mathcal A(x)\ge c_0x^r,\qquad c_0=\kappa(\sigma/2)^{r-1}>0.
\]
We first prove negativity, using only the conical asymptote.
If $\Psi(x_0)\ge0$ at some point $x_0>d_+$, the integrating-factor formula for
\eqref{eq:defect-linear-unique} implies $\Psi(x)>0$ for $x>x_0$.
Choosing $x_1>x_0$ sufficiently large then gives
\[
\Psi(x)\ge\Psi(x_1)
\exp\left(\frac{c_0}{r+1}(x^{r+1}-x_1^{r+1})\right),
\qquad x\ge x_1.
\]
Since $(u/x)'=\Psi/x^2$, this contradicts the finite limit of $u/x$.
Thus $\Psi<0$ on $(d_+,\infty)$, and $u/x$ decreases strictly to
$\sigma$ there.  In particular, $u>\sigma x$ and $p<u/x$ on $(d_+,\infty)$.

At any zero of $p$, the profile equation reads
\[
p'=\frac{a_0}{u}-\kappa u^r.
\]
Because $u(x)\to\infty$, choose $R>d_+$ so that every zero of $p$ in
$[R,\infty)$ satisfies $p'<0$.  Thus $p$ cannot cross from negative
to positive values there.  More explicitly, if
$p(x_0)<0$ and $p$ later became nonnegative, its first subsequent zero $z$
would satisfy $p(z)=0$ and $p'(z)\ge0$, contrary to the choice of $R$.
If $p(x_0)=0$, then $p'(x_0)<0$, so $p<0$ immediately to the right and the
same first-zero argument applies.  Hence $p(x)\le0$ for every $x\ge x_0$.
It follows that $u$ is nonincreasing on $[x_0,\infty)$, contradicting
$u(x)/x\to\sigma>0$, which implies $u(x)\to\infty$.  Therefore $p$ has no
nonpositive value on a sufficiently distant half-line.
Hence, after enlarging $R$,
\[
0<p(x)<\frac{u(x)}x\le M,\qquad x\ge R,
\]
for some $M>0$.  It follows that $|\Psi(x)|=O(x)$ and
$0<b(x)\le B_0$ on this half-line.

Integrating \eqref{eq:defect-linear-unique} from $x$ to $T$ gives
\[
\Psi(x)=e^{-\int_x^T \mathcal A(t)\dd t}\Psi(T)
-\int_x^T b(s)e^{-\int_x^s \mathcal A(t)\dd t}\dd s.
\]
The first term tends to zero because $\Psi(T)=O(T)$ and
$\mathcal A(t)\ge c_0t^r$.  The integral converges by the same lower bound and
the boundedness of $b$.  Hence
\begin{equation}\label{eq:defect-infinity-unique}
\Psi(x)=-\int_x^\infty b(s)
\exp\left(-\int_x^s \mathcal A(t)\dd t\right)\dd s.
\end{equation}
For $s\ge x\ge R$ we have $\mathcal A(s)\ge c_0x^r$, and therefore
\[
0<-\Psi(x)\le B_0\int_x^\infty e^{-c_0x^r(s-x)}\dd s
=\frac{B_0}{c_0}x^{-r}.
\]
Finally,
\[
\frac{u(x)}x-\sigma=-\int_x^\infty\frac{\Psi(t)}{t^2}\dd t,
\qquad
u'-\sigma=\frac{u-\sigma x+\Psi}{x}.
\]
These identities give
$0<u-\sigma x\le B_0[c_0(r+1)]^{-1}x^{-r}$ and the asserted
estimate for $u'-\sigma$.
\end{proof}

\begin{theorem}\label{thm:uniqueness}
Let $1\le r\le n-1$, $\kappa>0$, and $\sigma>0$.  Suppose that
$u_i\in C^2((d_i,\infty))$, $d_i\in\R$, $i=1,2$, are positive
solutions of \eqref{eq:profile} and
\[
\lim_{x\to\infty}\frac{u_1(x)}x
=\lim_{x\to\infty}\frac{u_2(x)}x=\sigma.
\]
Then $u_1=u_2$ on $(d_3,\infty)$, \(d_3=\max\{d_1,d_2\}\).
\end{theorem}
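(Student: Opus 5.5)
The plan is to compare the two angular defects directly, using the first-order structure \eqref{eq:defect-linear-unique} from the proof of Lemma~\ref{lem:automatic-decay}. Write $p_i=u_i'$, $\Psi_i=xp_i-u_i$, and let $\mathcal A_i$, $b_i$ be the coefficients of \eqref{eq:defect-linear-unique} built from $u_i$. Set
\[
w=u_1-u_2,\qquad D=\Psi_1-\Psi_2 .
\]
By Lemma~\ref{lem:automatic-decay}, on a half-line $[R,\infty)$ we have
\[
u_i>\sigma x,\qquad 0<p_i\le M,\qquad \Psi_i=O(x^{-r}),\qquad \mathcal A_i\ge c_0x^r .
\]
In particular $D\to0$ and $w/x\to0$.

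\textbf{Step 1: control $w$ by $D$.} Since $(w/x)'=D/x^2$ and $w/x\to0$, we have $w(x)/x=-\int_x^\infty D(t)t^{-2}\,\dd t$. Put $\delta(x)=\sup_{t\ge x}|D(t)|$. Then
\[
|w(x)|\le\delta(x),\qquad |w'(x)|=\left|\frac{w+D}{x}\right|\le\frac{2\delta(x)}{x}.
\]

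\textbf{Step 2: a linear equation for $D$ with a small forcing.} Subtracting the two copies of \eqref{eq:defect-linear-unique} gives
\[
D'=\mathcal A_1D+E,\qquad E=(\mathcal A_1-\mathcal A_2)\Psi_2+(b_1-b_2).
\]
I will bound the two pieces of $E$ by the mean value theorem. The function $\mathcal A=\kappa xu^{r-1}(1+p^2)^{(r+1)/2}$ satisfies $|\partial_u\mathcal A|\lesssim x^{r-1}$ and $|\partial_p\mathcal A|\lesssim x^{r}$ along the segment between $(u_2,p_2)$ and $(u_1,p_1)$, because $u$ is comparable to $x$ and $p$ is bounded there. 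Since $\Psi_2=O(x^{-r})$, this gives
\[
|(\mathcal A_1-\mathcal A_2)\Psi_2|\lesssim \frac{|w|}{x}+|w'|\lesssim\frac{\delta}{x}.
\]
Similarly $b=a_0x(1+p^2)/u$ has $|\partial_ub|\lesssim x^{-1}$ and $|\partial_pb|\lesssim1$, so $|b_1-b_2|\lesssim\delta/x$. Hence $|E(x)|\le C\delta(x)/x$ on $[R,\infty)$.

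\textbf{Step 3: absorption.} As in the derivation of \eqref{eq:defect-infinity-unique}, the facts $D\to0$ and $\mathcal A_1\ge c_0x^r$ give
\[
D(x)=-\int_x^\infty E(s)\exp\left(-\int_x^s\mathcal A_1\right)\dd s .
\]
For $s\ge x$ we have $|E(s)|\le C\delta(x)/x$ because $\delta$ is nonincreasing. Therefore
\[
|D(x)|\le \frac{C}{c_0}x^{-r-1}\delta(x).
\]
Taking the supremum over $t\ge x$ yields $\delta(x)\le C'x^{-r-1}\delta(x)$. For $x$ large this forces $\delta=0$, hence $D\equiv0$ and then $w\equiv0$ on a half-line. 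Finally, the right-hand side of \eqref{eq:profile} is smooth on $\{u>0\}$, so standard ODE uniqueness propagates $u_1=u_2$ backwards along the whole common interval $(d_3,\infty)$, where both solutions are positive.

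\textbf{Main obstacle.} The delicate point is Step~2. One must check that the mean-value bounds carry exactly the weights claimed: $\partial_p\mathcal A\sim x^r$ has to be compensated by $\Psi_2=O(x^{-r})$, and $w'$ has to be controlled by $\delta/x$ rather than $\delta$. Both need the uniform lower bound $u_i\gtrsim x$ and the bound $p_i\le M$ from Lemma~\ref{lem:automatic-decay}. If the bounds close with a gain of $x^{-r-1}$ as above, the argument works with room to spare.
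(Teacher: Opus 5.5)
Your argument is correct, and it reaches the conclusion through a different decomposition than the paper.

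\textbf{The paper's route.} The paper linearizes the full right-hand side $F$ along the segment $u_\theta$. This gives the second-order equation \eqref{eq:normalized-difference} for $y=w/x$. It then verifies by hand the cancellation \eqref{eq:uniqueness-cancellation} of the leading terms in $F_u+F_p/x$, which yields $|Q|\le C_*x^{-2}$. Finally it integrates twice from infinity and absorbs in $\sup|y|$.

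\textbf{Your route.} You keep the exact first-order defect equation \eqref{eq:defect-linear-unique} and linearize only its coefficients $\mathcal A$ and $b$. The dangerous derivative $\partial_p\mathcal A\sim x^r$ is then automatically multiplied by $\Psi_2=O(x^{-r})$. So the cancellation the paper computes explicitly is built into your choice of unknown. The identity $xw'-w=D$ then recovers $w$ and $w'$ from $D$ with exactly the weights needed.

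\textbf{Relation between the two.} The unknowns are really the same object, since $y'=D/x^2$. Equation \eqref{eq:normalized-difference} is equivalent to $D'=\beta D+xQw$. You have split the coefficients differently: the exact $\mathcal A_1$ multiplies $D$, and everything else goes into the forcing $E$, instead of using the averaged $\beta$.

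\textbf{What each buys.} Your version is shorter and avoids computing $F_u$ and $F_p$ altogether. The paper's version sets up bounds $P\ge c_*x^r$ and $|Q|\le C_*x^{-2}$ for a linearization along a segment whose endpoints need not both be solutions. That framework is reused almost verbatim for the remainder equation \eqref{eq:remainder-equation} in Section~\ref{sec:expansion}, where $U_N$ is only an approximate solution. Its coefficient bounds are also reused for the linear inverse in Lemma~\ref{lem:slope-linear-inverse}.
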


\begin{proof}
Set
\[
F(x,u,p)=\frac{a_0(1+p^2)}u
+\kappa u^{r-1}(1+p^2)^{(r+1)/2}(xp-u).
\]
Thus $u_i''=F(x,u_i,u_i')$.  For $0\le\theta\le1$, define
\[
u_\theta=(1-\theta)u_2+\theta u_1,\qquad p_\theta=u_\theta',
\]
and let
\[
\alpha(x)=\int_0^1F_u(x,u_\theta,p_\theta)\dd\theta,\qquad
\beta(x)=\int_0^1F_p(x,u_\theta,p_\theta)\dd\theta.
\]
The fundamental theorem of calculus implies that $w=u_1-u_2$ satisfies
$w''=\alpha w+\beta w'$.  The interpolation $u_\theta$ need not solve
the profile equation.  Nevertheless, Lemma~\ref{lem:automatic-decay}
implies, uniformly for $\theta\in[0,1]$,
\begin{equation}\label{eq:interpolated-decay}
u_\theta=\sigma x+O(x^{-r}),\qquad
p_\theta=\sigma+O(x^{-r-1}),\qquad
xp_\theta-u_\theta=O(x^{-r}).
\end{equation}

The appropriate unknown is $y=w/x$.  Direct substitution gives
\begin{equation}\label{eq:normalized-difference}
y''-P(x)y'=Q(x)y,\qquad
P=\beta-\frac2x,\qquad Q=\alpha+\frac\beta x.
\end{equation}
We verify the estimates
\begin{equation}\label{eq:uniqueness-coefficients}
P(x)\ge c_*x^r,\qquad |Q(x)|\le C_*x^{-2}
\end{equation}
on a sufficiently distant half-line, with $c_*,C_*>0$.
For this calculation write $W=(1+p^2)^{1/2}$ and $\Psi=xp-u$.  Then
\begin{align*}
F_u&=-\frac{a_0W^2}{u^2}
 +\kappa(r-1)u^{r-2}W^{r+1}\Psi
 -\kappa u^{r-1}W^{r+1},\\
F_p&=\frac{2a_0p}{u}+\kappa xu^{r-1}W^{r+1}
 +\kappa(r+1)u^{r-1}pW^{r-1}\Psi.
\end{align*}
Using \eqref{eq:interpolated-decay}, we obtain
\[
\beta(x)=\kappa\sigma^{r-1}(1+\sigma^2)^{(r+1)/2}x^r+O(x^{-1}),
\]
which yields the lower bound for $P$.  More importantly, the leading
terms in $F_u+F_p/x$ cancel exactly:
\begin{align}
F_u+\frac{F_p}{x}
={}&a_0\left(\frac{2p}{xu}-\frac{W^2}{u^2}\right)+\kappa\Psi\left((r-1)u^{r-2}W^{r+1}
 +\frac{r+1}{x}u^{r-1}pW^{r-1}\right).
\label{eq:uniqueness-cancellation}
\end{align}
At $(u,p)=(u_\theta,p_\theta)$ the first term on the right-hand side
is $O(x^{-2})$; in the second term the factor in parentheses is
$O(x^{r-2})$ and
$\Psi=O(x^{-r})$.  Integration in $\theta$ gives the bound for $Q$.
This calculation also applies to $r=1$, when the term with factor $r-1$
vanishes.

Lemma~\ref{lem:automatic-decay} gives
\[
y=O(x^{-r-1}),\qquad y'=O(x^{-r-2}).
\]
Integrating \eqref{eq:normalized-difference} from $x$ to $T$ and
letting $T\to\infty$, the terminal term
$y'(T)\exp(-\int_x^T P)$ tends to zero by
\eqref{eq:uniqueness-coefficients}.  We obtain
\begin{equation}\label{eq:difference-infinity}
y'(x)=-\int_x^\infty Q(s)y(s)
\exp\left(-\int_x^sP(t)\dd t\right)\dd s.
\end{equation}
The integral is absolutely convergent because $y$ is bounded and
\eqref{eq:uniqueness-coefficients} holds.

Choose $R$ so that \eqref{eq:uniqueness-coefficients} holds for $x\ge R$
and set $M_R=\sup_{s\ge R}|y(s)|<\infty$.  For $x\ge R$,
\begin{align*}
|y'(x)|
&\le C_*M_R\int_x^\infty s^{-2}e^{-c_*x^r(s-x)}\dd s\\
&\le\frac{C_*}{c_*}M_Rx^{-r-2}.
\end{align*}
Since $y(x)\to0$, a second integration gives
\[
|y(x)|\le\frac{C_*}{c_*(r+1)}M_Rx^{-r-1}.
\]
Taking the supremum over $x\ge R$, we conclude that
\begin{equation}\label{eq:tail-contraction}
M_R\le\frac{C_*}{c_*(r+1)}R^{-r-1}M_R.
\end{equation}
Enlarging $R$ so that the coefficient is less than one yields $M_R=0$.
Thus $u_1=u_2$ on $[R,\infty)$.  Since $F$ is smooth for $u>0$,
uniqueness for the initial value problem extends this equality throughout
the common interval of definition.
\end{proof}

Theorem~\ref{thm:uniqueness} is the uniqueness assertion of
Theorem~\ref{thm:main}: a positive solution on $(d,\infty)$ with
$u(x)/x\to\sigma$ agrees with $u_\sigma$ on the common interval of
definition.

\begin{corollary}\label{cor:canonical-limit}
For each $\sigma>0$, the finite-radius profiles converge without passing
to a subsequence:
\[
u_{\sigma,a}\longrightarrow u_\sigma
\quad\text{in }C^\infty_{\mathrm{loc}}((0,\infty))
\quad\text{as }a\to\infty.
\]
In particular, the limit does not depend on the choice of the sequence $a_j$.
\end{corollary}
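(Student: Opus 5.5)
The plan is the standard subsequence argument: every sequence has a convergent subsequence, and uniqueness pins down the limit. Let $a_j\to\infty$ be an arbitrary sequence. The compactness argument in the existence proof of Theorem~\ref{thm:main} applies to it without change. Lemma~\ref{lem:finite} gives, on each $[\varepsilon,E]\Subset(0,\infty)$ and for $a_j\ge E$, uniform $C^1$ bounds and the lower bound $u_{\sigma,a_j}\ge\sigma\varepsilon$. Differentiating \eqref{eq:profile} then gives uniform $C^k$ bounds. A diagonal Arzel\`a--Ascoli argument therefore yields a further subsequence $a_{j_k}$ and a function $v$ with $u_{\sigma,a_{j_k}}\to v$ in $C^\infty_{\mathrm{loc}}((0,\infty))$. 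The limit $v$ is a positive solution of \eqref{eq:profile} on $(0,\infty)$.

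Next we identify $v$. The estimate \eqref{eq:finite0} holds for every $a\ge x$ with constants independent of $a$, so it passes to the limit:
\[
\sigma x\le v(x)\le\sigma x+\frac{n-r}{2r\kappa\sigma^r}x^{-r}.
\]
In particular $v(x)/x\to\sigma$. Theorem~\ref{thm:uniqueness} with $d_1=d_2=0$ then gives $v=u_\sigma$ on $(0,\infty)$. Note that this identification does not need the integral representation \eqref{eq:global-integral}; the slope condition alone suffices.

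Finally, suppose the full family did not converge to $u_\sigma$ in $C^\infty_{\mathrm{loc}}((0,\infty))$. Then there would be a compact interval $[\varepsilon,E]$, an integer $k$, some $\delta>0$, and a sequence $a_j\to\infty$ with
\[
\|u_{\sigma,a_j}-u_\sigma\|_{C^k([\varepsilon,E])}\ge\delta
\]
for all $j$. By the first step this sequence has a subsequence converging in $C^\infty_{\mathrm{loc}}$. By the second step its limit is $u_\sigma$, which contradicts the lower bound $\delta$. Hence $u_{\sigma,a}\to u_\sigma$ as $a\to\infty$, through the continuous parameter and not only along sequences. The independence from the choice of $a_j$ follows at once.

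There is no real obstacle here. The one point to check is that the bounds of Lemma~\ref{lem:finite} are uniform in $a$, so that both compactness and the slope condition $v(x)/x\to\sigma$ hold for every subsequential limit. This is exactly what lets Theorem~\ref{thm:uniqueness} apply.
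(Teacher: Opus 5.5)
Your proposal is correct and follows the paper's proof essentially step for step. Both use precompactness from the uniform bounds of Lemma~\ref{lem:finite} and the ODE, identify every subsequential limit with $u_\sigma$ via the slope bound and Theorem~\ref{thm:uniqueness}, and conclude with the same contradiction argument for the full family.
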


\begin{proof}
The bounds in Lemma~\ref{lem:finite} and the profile equation imply
precompactness on every compact subinterval, as in the existence proof.
Every sequence $a_j\to\infty$ has a subsequence converging locally
smoothly to a positive solution with asymptotic slope $\sigma$.
Theorem~\ref{thm:uniqueness} identifies each such limit with $u_\sigma$.
If convergence of the full family failed on some compact interval in
some $C^k$ norm, a sequence witnessing that failure would have a
subsequence converging to $u_\sigma$, a contradiction.
\end{proof}

\begin{proposition}\label{prop:slope-dependence}
If $0<\sigma<\sigma'$, then
\begin{equation}\label{eq:explicit-ordering-radius}
u_\sigma(x)<u_{\sigma'}(x)
\quad\text{for every}\quad
x\ge R(\sigma,\sigma'):=
\left(
\frac{n-r}{2r\kappa\sigma^r(\sigma'-\sigma)}
\right)^{1/(r+1)}.
\end{equation}
In particular, the family is strictly ordered outside an explicitly
controlled compact region for every fixed pair of slopes.
\end{proposition}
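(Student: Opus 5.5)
The plan is to use nothing beyond the explicit two-sided bounds \eqref{eq:main0} of Theorem~\ref{thm:main}. Fix $0<\sigma<\sigma'$ and $x>0$. The upper bound in \eqref{eq:main0} for slope $\sigma$ gives
\[
u_\sigma(x)\le \sigma x+\frac{n-r}{2r\kappa\sigma^r}x^{-r}.
\]
The strict lower bound in \eqref{eq:main0} for slope $\sigma'$ gives $u_{\sigma'}(x)>\sigma' x$. Hence $u_\sigma(x)<u_{\sigma'}(x)$ holds as soon as
\[
\sigma x+\frac{n-r}{2r\kappa\sigma^r}x^{-r}\le \sigma'x.
\]
This is equivalent to
\[
x^{r+1}\ge \frac{n-r}{2r\kappa\sigma^r(\sigma'-\sigma)},
\]
which is exactly $x\ge R(\sigma,\sigma')$. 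The threshold case still gives strict inequality, because the lower bound $u_{\sigma'}(x)>\sigma'x$ is strict.

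The only point needing care is the direction of the bounds. We use the upper estimate for the smaller slope and the strict lower estimate for the larger slope. We also use that \eqref{eq:main0} holds for every $x>0$ with constants depending only on $\sigma$, not merely asymptotically. Both facts are part of Theorem~\ref{thm:main}: they come from Lemma~\ref{lem:finite} passed to the limit, and strictness comes from \eqref{eq:PsiJ} via $(u/x)'=\Psi/x^2<0$.

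I expect no genuine obstacle. The "in particular" clause follows because $\{x\le R(\sigma,\sigma')\}$ cut out of the ends is compact-region control with an explicit radius.
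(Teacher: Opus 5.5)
Your proposal is correct and matches the paper's proof: it combines the strict lower bound $u_{\sigma'}(x)>\sigma'x$ with the upper bound in \eqref{eq:main0} for the smaller slope $\sigma$, and observes that $(\sigma'-\sigma)x-\frac{n-r}{2r\kappa\sigma^r}x^{-r}\ge0$ exactly when $x\ge R(\sigma,\sigma')$. You also correctly note that strictness at the threshold comes from $\Psi_{\sigma'}<0$ in \eqref{eq:PsiJ}.
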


\begin{proof}
By Theorem~\ref{thm:main},
\[
u_{\sigma'}(x)>\sigma'x,
\qquad
u_\sigma(x)\le\sigma x+
\frac{n-r}{2r\kappa\sigma^r}x^{-r}.
\]
Consequently,
\[
u_{\sigma'}(x)-u_\sigma(x)
>(\sigma'-\sigma)x-
\frac{n-r}{2r\kappa\sigma^r}x^{-r},
\]
and the right-hand side is nonnegative exactly when $x\ge R(\sigma,\sigma')$.
\end{proof}

Continuity, and in fact real analyticity, of $\sigma\mapsto u_\sigma$ is
proved in Section~\ref{sec:analytic}. The ordering in
Proposition~\ref{prop:slope-dependence} does not extend to all $x>0$; see
Remark~\ref{rem:numerical-crossing}.

\section{Complete asymptotic expansion}\label{sec:expansion}

Let $u$ be a positive solution on $(d,\infty)$ with $u(x)/x\to\sigma>0$.
Throughout this section set $a_0=(n-r)/r$, $q=r+1$, and
$W_0=\sqrt{1+\sigma^2}$.  The symbol $q$ is distinct from the quantity
$m=u\sqrt{1+(u')^2}$ used in the profile equation.
All estimates below are taken as $x\to\infty$.

\begin{proposition}\label{prop:defect-leading}
The angular defect satisfies
\begin{equation}\label{eq:formalPsi}
\Psi(x)\sim-\frac{n-r}
{r\kappa\sigma^r(1+\sigma^2)^{(r-1)/2}}x^{-r}.
\end{equation}
Thus, $u''=o(x^{-1})$ and $\mu=o(x^{-1})$.
\end{proposition}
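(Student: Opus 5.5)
We start from the exact representation \eqref{eq:defect-infinity-unique} from the proof of Lemma~\ref{lem:automatic-decay}:
\[
\Psi(x)=-\int_x^\infty b(s)\exp\Bigl(-\int_x^s\mathcal A(t)\dd t\Bigr)\dd s,
\qquad
b=a_0\frac{x(1+p^2)}u,\quad \mathcal A=\kappa xu^{r-1}(1+p^2)^{(r+1)/2}.
\]
Lemma~\ref{lem:automatic-decay} gives $u=\sigma x+O(x^{-r})$ and $p=\sigma+O(x^{-r-1})$. Inserting these yields
\[
b(s)=b_\infty+O(s^{-r-1}),\qquad b_\infty=\frac{a_0W_0^2}{\sigma},
\]
and
\[
\mathcal A(t)=A_\infty t^r\bigl(1+O(t^{-r-1})\bigr),\qquad A_\infty=\kappa\sigma^{r-1}W_0^{r+1}.
\]
The expected leading term is
\[
\Psi\sim -\frac{b_\infty}{A_\infty}x^{-r}
=-\frac{a_0W_0^2}{\kappa\sigma^rW_0^{r+1}}x^{-r}.
\]
This is exactly \eqref{eq:formalPsi}, since $a_0=(n-r)/r$ and $W_0^{2-(r+1)}=(1+\sigma^2)^{-(r-1)/2}$.

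To make this rigorous, we use a Laplace-type argument rather than expanding the double integral by brute force. Write $\mathcal A(s)\Psi(x)$-type comparisons as follows. Since
\[
\int_x^\infty\mathcal A(s)e^{-\int_x^s\mathcal A}\dd s=1,
\]
we have
\[
-\Psi(x)-\frac{b(x)}{\mathcal A(x)}
=\int_x^\infty\Bigl(\frac{b(s)}{\mathcal A(s)}-\frac{b(x)}{\mathcal A(x)}\Bigr)\mathcal A(s)e^{-\int_x^s\mathcal A}\dd s .
\]
The function $b/\mathcal A$ is $O(s^{-r})$ and varies slowly: on the effective window $s-x\lesssim x^{-r}$, where the kernel concentrates because $\mathcal A\ge c_0x^r$, its oscillation is $o(x^{-r})$. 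Outside that window the kernel is exponentially small relative to the bounded factor. Hence
\[
\Psi(x)=-\frac{b(x)}{\mathcal A(x)}\bigl(1+o(1)\bigr),
\]
and $b(x)/\mathcal A(x)\sim(b_\infty/A_\infty)x^{-r}$.

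An alternative, which I would try first because it is cleaner, avoids the kernel altogether. From \eqref{eq:defect-linear-unique},
\[
\Psi=\frac{\Psi'-b}{\mathcal A}.
\]
It then suffices to show $\Psi'=o(1)$. Differentiating the integral formula, or using $\Psi'=xu''$, reduces this to $u''=o(x^{-1})$.

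This second route is circular as stated, so the key step is to control $\Psi'$ independently. I will use the identity $\Psi'=\mathcal A\Psi+b$ together with the bound $|\Psi|\le(B_0/c_0)x^{-r}$. This bound only shows that $\Psi'$ is bounded, so a bootstrap is needed. For this I fall back on the Laplace argument above, which is where the real work lies. The main obstacle is estimating the oscillation of $b/\mathcal A$ on the window. This needs the derivative bounds
\[
b'=O(x^{-r-1}),\qquad \mathcal A'/\mathcal A=O(x^{-1}),
\]
and these follow from $u'=O(1)$ and from $u''$ being bounded, which holds since $u''=\Psi'/x$ with $\Psi'$ bounded.

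Finally, once \eqref{eq:formalPsi} is proved, substituting it into $\Psi'=\mathcal A\Psi+b$ shows that the two leading terms cancel, so $\Psi'=o(1)$. Then $u''=\Psi'/x=o(x^{-1})$. Since $W$ is bounded, $\mu=-u''/W^3=o(x^{-1})$.
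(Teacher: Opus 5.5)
Your argument is correct, but its mechanism differs from the paper's. The paper substitutes $s=x+t/x^r$ in \eqref{eq:defect-infinity-unique} and applies dominated convergence. It uses only the limits $x^{-r}\mathcal A(x+\tau/x^r)\to\mathcal A_\infty$ and $b\to b_\infty$, together with the majorant $Ce^{-\underline c\,t}$, and no information on $u''$. You instead use $\int_x^\infty\mathcal A(s)e^{-\int_x^s\mathcal A}\,ds=1$ to compare $-\Psi(x)$ with the local ratio $g=b/\mathcal A=a_0/(\kappa u\,m^{r-1})$. This is exactly the value of $-\Psi$ at which $u''$ vanishes.

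Two details need repair, neither fatal.

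First, the bound $b'=O(x^{-r-1})$ does not follow from your premises. Since $\mathcal A=O(x^r)$ and $\Psi=O(x^{-r})$, you get $\Psi'=\mathcal A\Psi+b=O(1)$, hence $u''=O(x^{-1})$. That gives only $b'=O(x^{-1})$ and $\mathcal A'/\mathcal A=O(x^{-1})$. This still yields $g'=O(x^{-r-1})$, which is all you need. For the $o(1)$ statement even that is unnecessary: $s^rg(s)\to b_\infty/\mathcal A_\infty$ already makes the oscillation of $g$ on $[x,x+1]$ of order $o(x^{-r})$.

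Second, a window of fixed width $Kx^{-r}$ leaves kernel mass up to $e^{-c_0K}$ outside it, so $K$ must tend to infinity. It is cleaner to integrate by parts, which gives
\[
-\Psi(x)-g(x)=\int_x^\infty g'(s)\,e^{-\int_x^s\mathcal A}\,ds
=O\Bigl(\frac{\sup_{s\ge x}|g'(s)|}{c_0x^r}\Bigr)=O(x^{-2r-1}).
\]
Lemma~\ref{lem:automatic-decay} also gives $g(x)=\frac{a_0}{\kappa\sigma^rW_0^{r-1}}x^{-r}\bigl(1+O(x^{-r-1})\bigr)$. So your route yields the sharp defect \eqref{eq:sharp-defect} directly, whereas the paper reaches it only after the full expansion of Section~\ref{sec:expansion}. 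The paper's argument is softer and needs fewer inputs; yours is quantitative.

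Your final deduction of $u''=o(x^{-1})$ from the cancellation in $\Psi'=\mathcal A\Psi+b$ is the same as the paper's statement $a_0/u+\kappa m^{r-1}\Psi=o(x^{-1})$.
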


\begin{proof}
Use $\mathcal A$ and $b$ from the proof of Lemma~\ref{lem:automatic-decay}.
That lemma gives
\[
\frac{\mathcal A(x)}{x^r}\longrightarrow \mathcal A_\infty
=\kappa\sigma^{r-1}W_0^{r+1},\qquad
b(x)\longrightarrow b_\infty=\frac{a_0W_0^2}{\sigma}.
\]
In \eqref{eq:defect-infinity-unique}, set $s=x+t/x^r$.  Then
\[
x^r\Psi(x)=-\int_0^\infty b(x+t/x^r)
\exp\left(-\int_x^{x+t/x^r}\mathcal A(z)\dd z\right)\dd t.
\]
For every fixed $t\ge0$, the exponent tends to $-\mathcal A_\infty t$.
Indeed, after setting $z=x+\tau/x^r$, its negative is
$\int_0^t x^{-r}\mathcal A(x+\tau/x^r)\dd\tau$, whose integrand converges
uniformly on bounded $\tau$-intervals to $\mathcal A_\infty$.
The bounds $\mathcal A(z)\ge \underline c\, z^r$ and $|b(z)|\le C$ for large $z$ dominate
the integrand by $Ce^{-\underline c\,t}$.  Dominated convergence therefore yields
\[
\lim_{x\to\infty}x^r\Psi(x)
=-\int_0^\infty b_\infty e^{-\mathcal A_\infty t}\dd t
=-\frac{b_\infty}{\mathcal A_\infty},
\]
which proves \eqref{eq:formalPsi}.  It follows that
\[
\frac{a_0}{u}+\kappa
\bigl(u\sqrt{1+(u')^2}\bigr)^{r-1}\Psi=o(x^{-1}).
\]
Since $u'\to\sigma$, the profile equation and
\eqref{eq:curv-graph} give the remaining conclusions.  This proof does
not require differentiating an asymptotic remainder.
\end{proof}

We next prove that the expansion extends to all orders, with derivative
bounds on the remainders.  The argument also proves
Theorem~\ref{thm:expansion-intro} for any positive solution with limiting
slope $\sigma$.

\begin{proof}[Proof of Theorem~\ref{thm:expansion-intro}]
\emph{Construction of the coefficients.}
Put $t=x^{-q}$ and write $u(x)=xv(t)$.  Dots denote derivatives with
respect to $t$.  Since $t'=-qx^{-q-1}=-qt/x$, the chain rule
first gives
$u'=v+x\dot v\,t'=v-qt\dot v$.  Hence
$\Psi=xu'-u=-qxt\dot v$, and a second differentiation gives
$u''=(qt/x)(r\dot v+qt\ddot v)$ because $q-1=r$.  Equivalently,
\[
u'=v-qt\dot v,\qquad
\Psi=-qxt\dot v,\qquad
xu''=qt(r\dot v+qt\ddot v).
\]
Writing $W=(1+(v-qt\dot v)^2)^{1/2}$, the profile equation becomes
\begin{equation}\label{eq:transformed-expansion}
\mathcal H[v]:=
\kappa qv^{r-1}W^{r+1}\dot v
+qt(r\dot v+qt\ddot v)-\frac{a_0W^2}{v}=0.
\end{equation}
The expression is analytic near $t=0$, $v=\sigma>0$, for bounded
$\dot v$ and $\ddot v$.  Seek a formal series
$v=\sigma+\sum_{j\ge1}c_jt^j$.  Its constant coefficient in
\eqref{eq:transformed-expansion} gives
\[
\kappa q\sigma^{r-1}W_0^{r+1}c_1
=\frac{a_0W_0^2}{\sigma},
\]
which is \eqref{eq:first-coefficient}.

The second coefficient is
\begin{equation}\label{eq:second-coefficient}
c_2=-\frac{r c_1^2}{2}
\left[
\frac1\sigma+
\frac{\sigma}{1+\sigma^2}
\left(1-r+\frac{r(r+1)}{n-r}\right)
\right].
\end{equation}
To verify this formula, take the coefficient of $t$.  Indeed,
put $S=1+\sigma^2=W_0^2$.  From
\[
v=\sigma+c_1t+c_2t^2+O(t^3),\qquad
v-qt\dot v=\sigma-r c_1t+O(t^2),
\]
we obtain
\begin{align*}
v^{r-1}W^{r+1}
&=\sigma^{r-1}S^{(r+1)/2}
\left[1+c_1\left(\frac{r-1}{\sigma}
-\frac{r(r+1)\sigma}{S}\right)t+O(t^2)\right],\\
\frac{W^2}{v}
&=\frac{S}{\sigma}
+c_1\left(-2r-\frac{S}{\sigma^2}\right)t+O(t^2).
\end{align*}
Taking the coefficient of $t$ in \eqref{eq:transformed-expansion} yields
\begin{align*}
0={}&\kappa q\sigma^{r-1}S^{(r+1)/2}
\left[2c_2+c_1^2\left(\frac{r-1}{\sigma}
-\frac{r(r+1)\sigma}{S}\right)\right]\\
&+qr c_1+a_0c_1\left(2r+\frac{S}{\sigma^2}\right).
\end{align*}
Using
$\kappa q\sigma^{r-1}S^{(r+1)/2}c_1=a_0S/\sigma$
and $a_0=(n-r)/r$, elementary simplification gives exactly
\eqref{eq:second-coefficient}.

For $j\ge1$, the coefficient of $t^{j-1}$ involving $c_j$ is exactly
$\kappa qj\sigma^{r-1}W_0^{r+1}c_j$.  To see this, the new term in
$v$ is of order $t^j$, its contribution to $v-qt\dot v$ is also of
order $t^j$, and its contribution to
$qt(r\dot v+qt\ddot v)$ is of order $t^j$.  Only the factor
$\dot v$ in the first term of \eqref{eq:transformed-expansion}
contributes at order $t^{j-1}$.  Thus, with $v_{j-1}(t)=
\sigma+\sum_{i=1}^{j-1}c_it^i$, the precise recursion is
\begin{equation}\label{eq:coefficient-recursion}
c_j=-\frac{[t^{j-1}]\mathcal H[v_{j-1}]}
{\kappa qj\sigma^{r-1}W_0^{r+1}}.
\end{equation}
Here $[t^{j-1}]$ denotes the Taylor coefficient at zero.  The denominator
is positive, so the recursion uniquely determines every coefficient.
No logarithmic terms are needed.

For $N\ge1$ set
\[
U_N(x)=\sigma x+\sum_{j=1}^N c_jx^{1-jq}.
\]
If $F$ is the right-hand side of the profile equation, as defined in the
proof of Theorem~\ref{thm:uniqueness}, then
\begin{equation}\label{eq:truncation-residual}
E_N(x):=U_N''-F(x,U_N,U_N')=O(x^{-1-Nq}).
\end{equation}
Indeed, $xE_N=\mathcal H[v_N](x^{-q})$, and the recursion cancels
all Taylor coefficients of $\mathcal H[v_N]$ through degree $N-1$.
This constructs formal approximations only; the following estimate compares them
with the actual solution.

\emph{Control of the remainder and its first derivative.}
For large $x$, $U_N>0$.  Define $e_N=u-U_N$ and $y_N=e_N/x$.
Linearizing $F$ along the segment joining $(U_N,U_N')$ to $(u,u')$
as in Theorem~\ref{thm:uniqueness} gives
\begin{equation}\label{eq:remainder-equation}
y_N''-P_Ny_N'=Q_Ny_N-\frac{E_N}{x}.
\end{equation}
Both endpoints of this segment satisfy the estimates in
\eqref{eq:interpolated-decay}.  Therefore the algebraic identity
\eqref{eq:uniqueness-cancellation}, which does not require the
interpolating profiles to solve the equation, yields
\begin{equation}\label{eq:remainder-coefficients}
P_N(x)\ge \underline c\, x^r,\qquad |Q_N(x)|\le Cx^{-2}
\end{equation}
for all sufficiently large $x$.  Constants may depend on $N$.
By Lemma~\ref{lem:automatic-decay}, initially
$y_N=O(x^{-q})$ and $y_N'=O(x^{-q-1})$.  Thus integration from infinity
in \eqref{eq:remainder-equation} is justified and gives
\begin{equation}\label{eq:remainder-integral}
y_N'(x)=-\int_x^\infty
\left(Q_N(s)y_N(s)-\frac{E_N(s)}s\right)
\exp\left(-\int_x^sP_N(z)\dd z\right)\dd s.
\end{equation}
The exponential lower bound in \eqref{eq:remainder-coefficients}
kills the terminal term and makes the integral absolutely convergent.

Let $M_R=\sup_{s\ge R}|y_N(s)|$.  By
\eqref{eq:truncation-residual}--\eqref{eq:remainder-integral}, for $x\ge R$,
\begin{align*}
|y_N'(x)|
&\le C\int_x^\infty
\left(M_Rs^{-2}+s^{-2-Nq}\right)e^{-\underline c\,x^r(s-x)}\dd s\\
&\le C M_Rx^{-r-2}+C x^{-r-2-Nq}.
\end{align*}
Integrating once more, using $y_N(\infty)=0$ and $q=r+1$, gives
\[
|y_N(x)|\le C M_Rx^{-q}+C x^{-(N+1)q}.
\]
The constants here are independent of $R$ once it is sufficiently large.
Taking the supremum for $x\ge R$ and then increasing $R$ if necessary,
\[
M_R\le C R^{-q}M_R+C R^{-(N+1)q}
\quad\Longrightarrow\quad
M_R\le C'R^{-(N+1)q}.
\]
Since this holds for every sufficiently large $R$, it proves
$y_N=O(x^{-(N+1)q})$.  Substituting this pointwise estimate back into
\eqref{eq:remainder-integral} gives
$y_N'=O(x^{-1-(N+1)q})$.  Hence
\begin{equation}\label{eq:remainder-C1}
u-U_N=O(x^{1-(N+1)q}),\qquad
u'-U_N'=O(x^{-(N+1)q}).
\end{equation}

\emph{Control after arbitrary differentiation.}
We supply this step separately, since differentiating a big-$O$ estimate
without further information is not valid.  The smoothness of $F$ for
$u>0$ and the bounds $u\asymp x$, $u'=O(1)$ first give
$u''=O(x^{-1})$.  Repeated differentiation of the ODE then shows that,
for each integer $\ell\ge0$, some finite exponent $B_\ell$ satisfies
\[
|u^{(\ell)}(x)|=O(x^{B_\ell}).
\]
For completeness, every partial derivative of $F(x,u,p)$ has at most
polynomial growth when $u\asymp x$ and $p$ is bounded: the denominators
are powers of $u$ and $1+p^2$, and the remaining factors are powers of
$x$, $u$, and bounded functions of $p$.  The chain rule therefore gives
the assertion inductively from the estimates for $u$ and $u'$.

For fixed $k$, the $2k$th derivative of the linear term $\sigma x$ vanishes,
and every remaining monomial in $U_M^{(2k)}$ has exponent
$1-jq-2k\le1-q-2k<0$. Hence,
$|U_M^{(2k)}(x)|\le C_{M,k}$ for $x\ge1$: the constant may depend on $M$,
but the polynomial exponent does not.  Taking
$B=\max\{B_{2k},0\}$ therefore gives, independently of $M$,

\[
\|(u-U_M)^{(2k)}\|_{L^\infty([x-1,x+1])}\le C_M x^B.
\]

We record the interpolation estimate used here.  If $f\in C^{2k}(I)$ and
$[x-1,x+1]\subset I$, then
\[
|f^{(k)}(x)|\le C_k\left(
\|f\|_{L^\infty(I_x)}+
\|f\|_{L^\infty(I_x)}^{1/2}
\|f^{(2k)}\|_{L^\infty(I_x)}^{1/2}\right),
\qquad I_x=[x-1,x+1].
\]
Indeed, Taylor interpolation at $2k$ fixed distinct nodes, rescaled into
$[x-h,x+h]$ with $0<h\le1$, yields
\[
|f^{(k)}(x)|\le C_k\bigl(h^{-k}\|f\|_\infty
+h^k\|f^{(2k)}\|_\infty\bigr).
\]
If $\|f^{(2k)}\|_\infty\le\|f\|_\infty$, take $h=1$.  Otherwise take
$h=(\|f\|_\infty/\|f^{(2k)}\|_\infty)^{1/(2k)}$.  These two cases give the
displayed inequality.  This is the local one-dimensional
Landau--Kolmogorov interpolation inequality; the argument above is included
so that no differentiation of a big-$O$ remainder is being assumed.

Applying it to $f=u-U_M$, using \eqref{eq:remainder-C1}, gives
\[
|(u-U_M)^{(k)}(x)|
\le C_{M,k}\left(x^{1-(M+1)q}
+x^{(1-(M+1)q+B)/2}\right).
\]
Given $N$ and $k$, choose $M>N$ large enough that both exponents are at
most $1-(N+1)q-k$.  Since
\[
u-U_N=(u-U_M)+\sum_{j=N+1}^M c_jx^{1-jq},
\]
we obtain
\[
(u-U_N)^{(k)}=O(x^{1-(N+1)q-k}).
\]
Together with \eqref{eq:remainder-C1}, this proves
\eqref{eq:expansion-intro} for every $N$ and $k$.

Finally, taking $N=1$ and $k=0,1,2$ yields
\begin{align*}
u&=\sigma x+c_1x^{-r}+O(x^{-2r-1}),\\
u'&=\sigma-rc_1x^{-r-1}+O(x^{-2r-2}),\\
u''&=r(r+1)c_1x^{-r-2}+O(x^{-2r-3}).
\end{align*}
Using $\mu=-u''(1+(u')^2)^{-3/2}$ gives
\eqref{eq:meridian-expansion} and completes the proof.
\end{proof}

\begin{corollary}\label{cor:sharp-defect}
The angular defect has the sharper expansion
\begin{equation}\label{eq:sharp-defect}
\Psi(x)=-\frac{n-r}{r\kappa\sigma^r(1+\sigma^2)^{(r-1)/2}}
 x^{-r}+O(x^{-2r-1}).
\end{equation}
In particular, $u''>0$ and $\mu<0$ for all sufficiently large $x$.
\end{corollary}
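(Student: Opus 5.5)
The plan is to read \eqref{eq:sharp-defect} directly off the expansion in Theorem~\ref{thm:expansion-intro} with $N=1$ and $k=0,1$. Since $\Psi=xu'-u$ and $q=r+1$, we write
\[
u=\sigma x+c_1x^{-r}+R_1,\qquad R_1^{(k)}=O(x^{-2r-1-k})\quad(k=0,1).
\]
The linear term $\sigma x$ cancels in $xu'-u$. The $c_1$ term contributes
\[
x\cdot(-rc_1x^{-r-1})-c_1x^{-r}=-(r+1)c_1x^{-r},
\]
and the remainder contributes $xR_1'-R_1=O(x^{-2r-1})$. Substituting the value of $c_1$ from \eqref{eq:first-coefficient}, the factor $r+1$ cancels, giving
\[
-(r+1)c_1=-\frac{n-r}{r\kappa\sigma^r(1+\sigma^2)^{(r-1)/2}}.
\]
This is exactly the leading coefficient in \eqref{eq:sharp-defect}, and it matches Proposition~\ref{prop:defect-leading}, which serves as a consistency check. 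Because Theorem~\ref{thm:expansion-intro} already controls the derivatives of the remainder, no differentiation of a big-$O$ estimate is needed.

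For the sign statements we use $k=2$ of the same expansion: $u''=r(r+1)c_1x^{-r-2}+O(x^{-2r-3})$, which was already recorded at the end of the proof of Theorem~\ref{thm:expansion-intro}. We have $c_1>0$ because $n>r$, $\kappa>0$ and $\sigma>0$. Since $-2r-3<-r-2$, the remainder is of strictly lower order, so $u''>0$ for large $x$. Then $\mu=-u''(1+(u')^2)^{-3/2}<0$ follows from \eqref{eq:curv-graph}, consistent with \eqref{eq:meridian-expansion}.

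As an alternative route, $u''$ can be obtained from the profile equation itself: the two $x^{-1}$ terms in $\frac{n-r}{ru}+\kappa m^{r-1}\Psi$ cancel at leading order, so that route would require the sharp form of $\Psi$ together with expansions of $u$ and $m$. Reading $u''$ from the $k=2$ expansion avoids that cancellation.

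No real obstacle is expected. The only point needing care is the arithmetic $-(r+1)c_1$ against \eqref{eq:first-coefficient}, and the check that the error exponent $-2r-1$ matches $1-2q$ with $q=r+1$.
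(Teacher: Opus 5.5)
Your proposal is correct and follows the paper's own proof. The paper likewise substitutes the $N=1$ expansions of $u$ and $u'$ into $\Psi=xu'-u$ to get $-(r+1)c_1x^{-r}+O(x^{-2r-1})$, uses $c_1>0$, and reads the signs of $u''$ and $\mu$ from the second-derivative expansion and \eqref{eq:meridian-expansion}.
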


\begin{proof}
Use $\Psi=xu'-u$, the first two expansions above, and $c_1>0$.
The signs follow from the expansion of $u''$ and
\eqref{eq:meridian-expansion}.
\end{proof}

\begin{remark}
The preceding result gives a complete classical asymptotic
expansion, differentiable term by term, with exponent set
$\{1-j(r+1):j\ge0\}$ and without logarithmic terms.  Here ``complete'' means
precisely the remainder estimate \eqref{eq:expansion-intro} for every finite
truncation and every derivative order; it does not assert convergence of the
infinite formal series.  The proof of the leading defect in
Proposition~\ref{prop:defect-leading} is independent of the construction
of the higher-order coefficients.
\end{remark}

\section{Analytic dependence and exterior foliation}\label{sec:analytic}

The first-order drift in the profile equation grows like $x^r$.
Consequently, its inverse must not be treated as a uniformly bounded
two-derivative gain in ordinary dyadically weighted H\"older spaces.
We first use a graph norm, which requires only the weighted $C^1$
estimates supplied by the integral equation. The expansion proved in
Section~\ref{sec:expansion} then upgrades parameter analyticity to all
weighted derivative orders.

For $R>0$, let $\mathcal C^j_\delta(R)$ be the space of functions on
$[R,\infty)$ with norm
\[
 \|z\|_{\mathcal C^j_\delta(R)}
 =\sum_{i=0}^j\sup_{x\ge R}x^{\delta+i}|z^{(i)}(x)|.
\]
For $0<\alpha<1$, define
\[
 \|z\|_{\mathcal X^{k,\alpha}_\delta(R)}
 =\|z\|_{\mathcal C^k_\delta(R)}
 +\sup_{x\ge R}x^{\delta+k+\alpha}
 [z^{(k)}]_{C^{0,\alpha}([x,2x])}.
\]
All spaces below also have their complex-valued versions.

\subsection{The linear inverse in a graph norm}

Fix $\sigma_0>0$ and write
\[
 \alpha_0(x)=F_u(x,u_{\sigma_0},u_{\sigma_0}'),\qquad
 \beta_0(x)=F_p(x,u_{\sigma_0},u_{\sigma_0}'),\qquad
 L_0z=z''-\alpha_0z-\beta_0z'.
\]
Here $F$ is the right-hand side of \eqref{eq:profile}, as defined in
the proof of Theorem~\ref{thm:uniqueness}. Let
\[
 \mathcal D_0(R)
 =\{z\in C^2([R,\infty)):
       z\in\mathcal C^1_r(R),\ L_0z\in\mathcal C^0_1(R)\},
\]
with the graph norm
\[
 \|z\|_{\mathcal D_0(R)}
 =\|z\|_{\mathcal C^1_r(R)}+\|L_0z\|_{\mathcal C^0_1(R)}.
\]
This is a Banach space: a Cauchy sequence converges in weighted $C^1$,
and the identity $z''=L_0z+\alpha_0z+\beta_0z'$ gives convergence of
its second derivatives on every compact interval. No boundary condition
is imposed at $R$; the two conditions at infinity are contained in the
weighted $C^1$ norm.

\begin{lemma}\label{lem:slope-linear-inverse}
For sufficiently large $R$, the operator
\[
 L_0:\mathcal D_0(R)\longrightarrow\mathcal C^0_1(R)
\]
is an isomorphism. Its weighted $C^1$ estimate can be chosen uniformly
when $\sigma_0$ ranges over a compact subinterval of $(0,\infty)$.
\end{lemma}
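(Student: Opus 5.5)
The plan is to repeat the normalization $y=z/x$ from Theorem~\ref{thm:uniqueness}, this time with a source term, and to invert $L_0$ by integrating from infinity. Given $g\in\mathcal C^0_1(R)$, write the equation $L_0z=g$ for $y=z/x$:
\[
y''-P_0y'=Q_0y+\frac gx,\qquad P_0=\beta_0-\frac2x,\qquad Q_0=\alpha_0+\frac{\beta_0}x.
\]
The computation \eqref{eq:uniqueness-cancellation}, evaluated at the actual profile $u_{\sigma_0}$, together with the decay \eqref{eq:automatic-decay}, gives $P_0\ge c_*x^r$ and $|Q_0|\le C_*x^{-2}$ for $x\ge R$. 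The constants depend only on the constants in \eqref{eq:main0}--\eqref{eq:mainPsi}, and these are uniform for $\sigma_0$ in a compact interval. This is where uniformity comes from.

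\textbf{Existence.} Define $\mathcal T$ on $Y=\{y:\sup_{x\ge R}x^{r+1}|y|<\infty\}$ by
\[
(\mathcal Ty)'(x)=-\int_x^\infty\Bigl(Q_0y+\frac gs\Bigr)(s)\,e^{-\int_x^sP_0}\,ds,\qquad \mathcal Ty(\infty)=0.
\]
The kernel bound used in Theorem~\ref{thm:uniqueness}, namely $\int_x^\infty s^{-a}e^{-c_*x^r(s-x)}ds\le c_*^{-1}x^{-a-r}$, applies here. Since $|g/s|\le\|g\|s^{-2}$ and $|Q_0y|\le C\|y\|_Ys^{-r-3}$, it gives $|(\mathcal Ty)'|\le C(\|g\|+R^{-r-1}\|y\|_Y)x^{-r-2}$. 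Integrating, $|\mathcal Ty|\le C(\|g\|+R^{-r-1}\|y\|_Y)x^{-r-1}$. For $R$ large, $\mathcal T$ is therefore a contraction on $Y$. Its fixed point $y$ satisfies $\|y\|_Y+\sup x^{r+2}|y'|\le C\|g\|$.

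\textbf{Estimate.} Set $z=xy$. Then $z=O(x^{-r})$ and $z'=y+xy'=O(x^{-r-1})$, so $\|z\|_{\mathcal C^1_r}\le C\|g\|_{\mathcal C^0_1}$. Since $L_0z=g$ by construction, $z\in\mathcal D_0(R)$ and $\|z\|_{\mathcal D_0}\le (C+1)\|g\|$. The map $g\mapsto z$ is linear, because $\mathcal T$ is affine in $g$ with a $g$-independent linear part. Boundedness of $L_0$ itself is immediate from the graph norm.

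\textbf{Injectivity.} This is the step I expect to be the delicate one. Suppose $z\in\mathcal D_0(R)$ and $L_0z=0$. Then $y=z/x=O(x^{-r-1})$ and $y'=O(x^{-r-2})$. It remains to show that $y$ is the fixed point of $\mathcal T$ with $g=0$.

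Integrating the $y$-equation from $x$ to $T$ produces a terminal term $y'(T)e^{-\int_x^TP_0}$, which tends to $0$. So $y'$ is given by the integral formula \eqref{eq:difference-infinity}. With $y(\infty)=0$, the same tail contraction \eqref{eq:tail-contraction} forces $y\equiv0$ on $[R,\infty)$ for $R$ large.

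The subtle point is that no boundary condition is imposed at $R$. The equation has a second solution behaving like $\exp(\int P_0)$, which the weighted $C^1$ norm excludes, and a bounded mode, which the decay condition $z=O(x^{-r})$ excludes. So the two conditions at infinity do determine $z$ uniquely.

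Combining existence, the estimate and injectivity, $L_0$ is a bijective bounded operator with bounded inverse. Its inverse bound depends only on $c_*$, $C_*$ and the threshold for $R$, which are uniform on compact $\sigma_0$-intervals.
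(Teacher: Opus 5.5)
Your proposal is correct and matches the paper's proof essentially step for step. Both use the substitution $y=z/x$, the Volterra representation from infinity with the kernel bound from $P_0\ge c_*x^r$ and $|Q_0|\le C_*x^{-2}$ (uniform by \eqref{eq:main0}--\eqref{eq:mainPsi}), a contraction in the weight $x^{r+1}$, and injectivity via the vanishing terminal term for elements of $\mathcal D_0(R)$, so that a kernel element is the zero fixed point with $g=0$. The one detail the paper states and you leave implicit (``by construction'') is that the fixed point is $C^2$ and solves the normalized equation; this follows by differentiating the integral formula for $y'$.
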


\begin{proof}
Put $z=xy$. The equation $L_0z=h$ becomes
\begin{equation}\label{eq:forced-normalized-linearization}
 y''-P_0y'=Q_0y+\frac hx,\qquad
 P_0=\beta_0-\frac2x,\quad Q_0=\alpha_0+\frac{\beta_0}x.
\end{equation}
The calculation in \eqref{eq:uniqueness-cancellation} gives, for large $R$,
\begin{equation}\label{eq:uniform-linear-coefficients}
 P_0(x)\ge \underline c\,x^r,\qquad |Q_0(x)|\le Cx^{-2}.
\end{equation}
These constants are uniform on compact slope intervals, by
\eqref{eq:main0}--\eqref{eq:mainPsi}.
A decaying solution must satisfy
\begin{align}
 y'(x)&=-\int_x^\infty
 \left(Q_0(s)y(s)+\frac{h(s)}s\right)
 e^{-\int_x^sP_0(t)\,dt}\,ds,
 \label{eq:linear-volterra-derivative}\\
 y(x)&=-\int_x^\infty y'(t)\,dt.
 \label{eq:linear-volterra-function}
\end{align}
The terminal term vanishes for $z\in\mathcal D_0(R)$.
Conversely, a fixed point of these integrals is $C^2$ and solves
\eqref{eq:forced-normalized-linearization}.

Use $\|y\|=\sup_{x\ge R}x^{r+1}|y(x)|$ and
$\|h\|_1=\sup_{x\ge R}x|h(x)|$.
The estimate $\int_x^sP_0\ge \underline c\,x^r(s-x)$ gives
\[
 |y'(x)|\le C\|h\|_1x^{-r-2}+C\|y\|x^{-2r-3},\qquad
 |y(x)|\le C\|h\|_1x^{-r-1}+C\|y\|x^{-2r-2}.
\]
The part depending on $y$ therefore has operator norm at most
$CR^{-r-1}$. Choose $R$ so this is less than $1/2$ and apply the
contraction principle in weighted $C^0$. The resulting solution satisfies
\begin{equation}\label{eq:linear-weighted-C0}
 \|z\|_{\mathcal C^1_r(R)}\le C\|h\|_1.
\end{equation}
Since $L_0z=h$, it belongs to $\mathcal D_0(R)$ and its graph norm is
bounded by $(C+1)\|h\|_1$. Taking $h=0$ proves injectivity.
\end{proof}

\subsection{Analyticity in weighted \texorpdfstring{$C^1$}{C1}}

\begin{lemma}\label{lem:analytic-profile-operator}
For $R$ sufficiently large, the map
\[
 \mathscr G(\sigma,v)
 =v''-F(x,\sigma x+v,\sigma+v')
\]
is real analytic from a neighborhood of
$(\sigma_0,u_{\sigma_0}-\sigma_0x)$ in
$\mathbb R\times\mathcal D_0(R)$ to $\mathcal C^0_1(R)$.
It has a holomorphic extension to a complex neighborhood.
\end{lemma}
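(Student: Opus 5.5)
The plan is to split $\mathscr G$ as $v''-F(x,\sigma x+v,\sigma+v')$. The strategy is to show it is a composition of a bounded linear map with an analytic Nemytskii-type operator. The only unbounded growth sits in the drift term $\kappa u^{r-1}W^{r+1}xp$. Its linear part must be controlled through the graph norm rather than pointwise.

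\textbf{Step 1: rewrite $\mathscr G$ around the base profile.} Write $u_0=u_{\sigma_0}$, $v_0=u_0-\sigma_0x$, and for $(\sigma,v)$ near $(\sigma_0,v_0)$ put $\tilde u=\sigma x+v$, $\tilde p=\sigma+v'$. Then
\[
\mathscr G(\sigma,v)=L_0(v-v_0)+\bigl[F(x,u_0,u_0')+\alpha_0(\tilde u-u_0)+\beta_0(\tilde p-u_0')-F(x,\tilde u,\tilde p)\bigr]+v_0''-F(x,u_0,u_0')+R_\sigma,
\]
where $R_\sigma$ collects the terms linear in $\sigma-\sigma_0$ coming from $\tilde u-u_0=(\sigma-\sigma_0)x+(v-v_0)$ and $\tilde p-u_0'=(\sigma-\sigma_0)+(v-v_0)'$. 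The term $v_0''-F(x,u_0,u_0')$ vanishes. The map $v\mapsto L_0(v-v_0)$ is bounded linear into $\mathcal C^0_1(R)$ by definition of the graph norm. What remains is to show that the bracket, which is the second-order Taylor remainder of $F$ at $(u_0,u_0')$, together with the explicit terms $\alpha_0(\sigma-\sigma_0)x+\beta_0(\sigma-\sigma_0)$, defines an analytic map into $\mathcal C^0_1(R)$.

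\textbf{Step 2: analyze the linear-in-$\sigma$ terms.} The term $\alpha_0 x+\beta_0$ equals $x(\alpha_0+\beta_0/x)=xQ_0+2$ up to an $O(1)$ correction. By the cancellation \eqref{eq:uniqueness-cancellation}, $|Q_0|\le Cx^{-2}$, so this term is $O(x^{-1})$ and lies in $\mathcal C^0_1(R)$. This cancellation is what makes the $\sigma$-direction bounded despite the $x^r$ drift: the first-order variation in $\sigma$ is nearly the scaling direction $x$, which the cone equation annihilates to leading order.

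\textbf{Step 3: handle the nonlinear remainder, which is the main obstacle.} The dangerous piece is $\kappa x\,u^{r-1}W^{r+1}p$ in the drift, together with $\Psi$-terms multiplied by powers of $u\sim x$. I will substitute $\tilde u=x(\sigma+v/x)$ and $\Psi=x\tilde p-\tilde u=(xv'-v)+(\text{base})$. I will also factor
\[
F(x,\tilde u,\tilde p)=a_0x^{-1}G_1(\sigma+v/x,\tilde p)+\kappa x^{r-1}G_2(\sigma+v/x,\tilde p)\,\Psi ,
\]
with $G_1,G_2$ analytic on a complex neighborhood of $(\sigma_0,\sigma_0)$. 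Since $v/x$ and $v'$ are small in sup norm, on the order of $R^{-r-1}\|v\|$, the composition $v\mapsto G_i(\sigma+v/x,\sigma+v')$ is analytic into bounded functions, via power series with geometric convergence in the sup norm. The quadratic remainder then contains products such as $x^{r-1}(\text{small})\cdot(xv'-v)$ and $x^{r-1}\Psi_0\cdot(\text{quadratic})$. Weighted $C^1_r$ control gives $|xv'-v|\lesssim x^{-r}$, and $\Psi_0=O(x^{-r})$ by \eqref{eq:mainPsi}. Each quadratic factor contributes an extra $x^{-r-1}$, so the products are $O(x^{-2r-2}\cdot x^{r-1})=O(x^{-1})$ or better. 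The worry is whether every term in the expansion keeps at least one factor of $\Psi$ or of $v/x$ or $v'$ paired with enough decay. I would verify this term by term using the structure $F=a_0W^2/u+\kappa u^{r-1}W^{r+1}\Psi$: any term without $\Psi$ carries the $1/u\sim x^{-1}$ prefactor.

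\textbf{Step 4: conclude.} The resulting series converge in $\mathcal C^0_1(R)$ uniformly for complex $(\sigma,v)$ in a small polydisc. This gives the holomorphic extension, hence real analyticity.
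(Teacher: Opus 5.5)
Your argument is correct in substance, and its key ingredient is the one the paper uses: factoring $F$ through $a=\sigma+v/x$, $p=\sigma+v'$ and the defect. The difference is that the paper applies this factorization directly, with no expansion around the base point. The fact your Step 3 only half states is that $x\tilde p-\tilde u=x(\sigma+v')-(\sigma x+v)=xv'-v$ \emph{exactly}. So the defect has no $\sigma$-dependent part, and no base part unless you split $v=v_0+(v-v_0)$. Consequently
\[ F(x,\sigma x+v,\sigma+v')=\frac{a_0(1+p^2)}{xa}+\kappa x^{r-1}a^{r-1}(1+p^2)^{(r+1)/2}(xv'-v) \]
is at once holomorphic into $\mathcal C^0_1(R)$. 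Indeed, $a$ and $p$ are holomorphic into bounded functions with values near $\sigma_0$, and $|xv'-v|\le x^{-r}\|v\|_{\mathcal C^1_r(R)}$. Add the bounded linearity of $v''=L_0v+\alpha_0v+\beta_0v'$ on $\mathcal D_0(R)$, and the check that $v_0\in\mathcal D_0(R)$ (which you omit; it follows from the expansion theorem), and the proof is complete. Your Step 1 reintroduces the term $-(\sigma-\sigma_0)(\alpha_0x+\beta_0)$ and then needs $|Q_0|\le Cx^{-2}$ to remove it. That cancellation is only the first-order shadow of the $\sigma$-independence of $xv'-v$. So the detour gains nothing beyond an explicit display of $D_v\mathscr G=L_0$, which is equally visible from the formula above.

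Several intermediate claims need correcting. In Step 2, $\alpha_0x+\beta_0=xQ_0$ exactly. There is no ``$+2$'', and an $O(1)$ term would not lie in $\mathcal C^0_1(R)$.

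In Step 3, it is false that each quadratic factor gains $x^{-r-1}$, because the increment in the $\sigma$-direction is the constant $\sigma-\sigma_0$. Suppose you expand your bracket in the variables $(u,p)$, as its definition suggests. Then a term such as $F_{pp}(\sigma-\sigma_0)^2$ is of exact order $x^r$, since $F_{pp}$ contains $2\kappa(r+1)xu^{r-1}p(1+p^2)^{(r-1)/2}$. Such terms cancel only collectively, so a term-by-term bound fails in those variables. It succeeds only in the variables $(a,p,\Psi)$. There each term carries either the prefactor $x^{-1}$, or a factor $\Psi_0$ or $x(v-v_0)'-(v-v_0)$ of size $x^{-r}$ against $x^{r-1}$. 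This is the mechanism your last sentence names.

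Finally, $x^{-2r-2}\cdot x^{r-1}=x^{-r-3}$, not $x^{-1}$.
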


\begin{proof}
Set $v_0=u_{\sigma_0}-\sigma_0x$. By
Theorem~\ref{thm:expansion-intro}, $v_0\in\mathcal C^1_r(R)$ and
$v_0''=O(x^{-r-2})$; also
$\alpha_0=O(x^{r-1})$ and $\beta_0=O(x^r)$, so $v_0\in\mathcal D_0(R)$.
For $v$ near $v_0$ and $\sigma$ near $\sigma_0$, write
\[
 a=\sigma+v/x,\qquad p=\sigma+v',\qquad d=xv'-v.
\]
The functions $a$ and $p$ stay in fixed complex neighborhoods of the
positive real number $\sigma_0$, and $d=O(x^{-r})$.
The function $a^{-1}$ and the branch of $(1+p^2)^{(r+1)/2}$ that is
positive for real $p$ are holomorphic on these neighborhoods.
Moreover,
\[
 F(x,\sigma x+v,\sigma+v')
 =\frac{a_0(1+p^2)}{xa}
  +\kappa x^{r-1}a^{r-1}(1+p^2)^{(r+1)/2}d.
\]
The weighted product estimates show that this expression is holomorphic
with values in $\mathcal C^0_1(R)$. The second derivative causes no
loss here: on the fixed graph domain,
\[
 v''=L_0v+\alpha_0v+\beta_0v'
\]
is a bounded linear map to $\mathcal C^0_1(R)$.
This proves the assertion.
\end{proof}

At $(\sigma_0,v_0)$ the derivative $D_v\mathscr G$ is $L_0$.
The analytic implicit-function theorem and
Lemma~\ref{lem:slope-linear-inverse} therefore give a holomorphic
family $v_\sigma$ in $\mathcal D_0(R)$ for $\sigma$ in a complex disc
about $\sigma_0$. For real $\sigma$ in this disc,
$\sigma x+v_\sigma$ is positive and has limiting slope $\sigma$.
Theorem~\ref{thm:uniqueness} identifies it with $u_\sigma$.
In particular, parameter analyticity is already established in
$\mathcal C^1_r(R)$, without a two-derivative Schauder estimate.

\subsection{Higher weighted derivative orders}

\begin{lemma}\label{lem:uniform-holomorphic-expansion}
The holomorphic family just constructed is locally bounded in
$\mathcal X^{k,\alpha}_r(R)$ for every $k\ge0$ and $0<\alpha<1$.
\end{lemma}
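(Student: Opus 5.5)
The plan is to bootstrap the weighted regularity of $v_\sigma$ directly from the ODE, using the $\mathcal D_0(R)$ bound already furnished by the implicit-function theorem. Locally bounded means: for $\sigma$ in a slightly smaller complex disc, $\sup\|v_\sigma\|_{\mathcal X^{k,\alpha}_r(R)}<\infty$. The starting point is the locally uniform bound $\|v_\sigma\|_{\mathcal C^1_r(R)}\le C$, with $\sigma$ complex. This means that the quantities $a=\sigma+v_\sigma/x$, $p=\sigma+v_\sigma'$ stay in fixed compact subsets of the holomorphy domains, and that $d=xv_\sigma'-v_\sigma$ satisfies $|d|\le Cx^{-r}$.

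\textbf{Second derivative.} Write the equation $\mathscr G(\sigma,v_\sigma)=0$ in the form used in Lemma~\ref{lem:analytic-profile-operator},
\[
v_\sigma''=\frac{a_0(1+p^2)}{xa}+\kappa x^{r-1}a^{r-1}(1+p^2)^{(r+1)/2}d .
\]
The drift term is not small separately, since $x^{r-1}d=O(x^{-1})$, so this only gives $v''=O(x^{-1})$ and not the needed $x^{-r-2}$. To fix this, rather than reading $v''$ from the right-hand side, I would use the defect equation. Set $\Psi_\sigma=d$; then $d'=xv''$, and the equation reads $d'=\mathcal A d+b$ with $\mathcal A=\kappa x^{r}a^{r-1}(1+p^2)^{(r+1)/2}$, $\operatorname{Re}\mathcal A\ge \underline c x^r$ (for $\sigma$ near real, $a,p$ near $\sigma_0$), and $b=a_0x(1+p^2)/(xa)\cdot x/x$ bounded. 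As in Proposition~\ref{prop:defect-leading}, this gives
\[
d(x)=-\int_x^\infty b\,e^{-\int_x^s\mathcal A}\,ds ,
\]
uniformly in $\sigma$. Then
\[
x^{r}d+\frac{b}{\mathcal A/x^r}=\int_0^\infty(\dots)
\]
is a difference quotient controlled by $\mathcal A'$ and $b'$, which are $O(x^{r-1})$ and $O(x^{-1})$ once $v''=O(x^{-1})$ is known. An integration by parts in $s$ in the integral formula yields
\[
d=-\frac{b}{\mathcal A}+\frac{1}{\mathcal A}\int_x^\infty b'\,e^{-\int_x^s\mathcal A}\,ds-\dots ,
\]
hence $d'=\mathcal A d+b=O(x^{-r-1})$. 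This gives $v''=d'/x=O(x^{-r-2})$ uniformly.

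\textbf{Induction on $k$.} Suppose $|v^{(i)}|\le C_ix^{-r-i}$ for $i\le k+1$, uniformly on the disc. Differentiate the defect relation $k$ times:
\[
d^{(k+1)}=\mathcal A d^{(k)}+\sum_{j\ge1}\binom kj\mathcal A^{(j)}d^{(k-j)}+b^{(k)} .
\]
By Fa\`a di Bruno and holomorphy of $a^{r-1},(1+p^2)^{(r+1)/2},a^{-1}$ on the fixed compacts, together with $x^ia^{(i)}=O(x^{-r-1})$ and $x^ip^{(i)}=O(x^{-r-1})$, one gets $\mathcal A^{(j)}=O(x^{r-j})$ and $b^{(k)}=O(x^{-k})$. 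Thus $d^{(k)}$ solves the same first-order equation with forcing $O(x^{-k})$. The integral representation from infinity (terminal terms vanish by the polynomial bounds) gives $d^{(k)}=O(x^{-r-k})$. The same integration-by-parts step as above upgrades $d^{(k+1)}=\mathcal A d^{(k)}+(\cdot)$ to $O(x^{-r-k-1})$, using that the forcing has derivative $O(x^{-k-1})$, which is available from the inductive hypothesis. Since $d^{(k+1)}=(xv'')^{(k)}$, this gives $v^{(k+2)}=O(x^{-r-k-2})$. The H\"older seminorm on $[x,2x]$ then follows from the $C^{k+1}$ bound by the mean value theorem, with $x^{\alpha}[\,\cdot\,]\le Cx\sup|\cdot'|$.

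\textbf{Main obstacle.} The main obstacle is the cancellation in the drift. Naively $\mathcal A d$ and $b$ are each only $O(1)$ relative to what is needed, and the integration by parts against the kernel $e^{-\int\mathcal A}$ must be carried out with complex $\sigma$. This needs $\operatorname{Re}\mathcal A\ge\underline c x^r$ and $|\mathcal A|\le Cx^r$, which is why I shrink the disc so that $a$ and $p$ have arguments near zero. Alternatively, one could bypass this by noting that for real $\sigma$ the bounds follow from Theorem~\ref{thm:expansion-intro}, and then pass to complex $\sigma$ via Cauchy estimates. That route fails, however, since the constants in Theorem~\ref{thm:expansion-intro} are not shown to be uniform in $\sigma$, so I would do the direct argument.
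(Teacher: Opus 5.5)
Your proof is correct, but it takes a different route from the paper's. The paper does not bootstrap the ODE directly. Instead it reruns the proof of Theorem~\ref{thm:expansion-intro} uniformly on a closed complex parameter disc:
the coefficients $c_j(\sigma)$ from \eqref{eq:coefficient-recursion} are holomorphic;
the linearization along the segment from $u_\sigma$ to $U_N$ uses the same holomorphic branch of $F$;
and the cancellation \eqref{eq:uniqueness-cancellation} gives $\operatorname{Re}P_N\ge\underline c\,x^r$, $|Q_N|\le C_Nx^{-2}$ and $|E_N|\le C_Nx^{-1-N(r+1)}$.
This yields uniform $C^0$ and $C^1$ remainder bounds. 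Higher derivatives then come from crude polynomial bounds plus the Landau--Kolmogorov interpolation, applied to real and imaginary parts with $N$ large, and the leftover compact interval is handled by the ODE. So the obstacle you raise, non-uniform constants in Theorem~\ref{thm:expansion-intro}, is resolved in the paper by making that proof uniform in complex $\sigma$, not by bypassing it. In any case, Cauchy estimates could not carry real-$\sigma$ bounds over to complex $\sigma$.

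Your route uses integration by parts against $e^{-\int_x^s\mathcal A}$ in the first-order defect equation, with induction on the derivative order. It is more elementary and self-contained: it needs no formal series and no interpolation, it produces the sharp rates $x^{-r-j}$ directly, and it holds on all of $[R,\infty)$. Its constants depend only on the $\mathcal C^1_r(R)$ bound, $\operatorname{Re}\mathcal A\ge\underline c\,x^r$ and $|\mathcal A|\le Cx^r$. The paper's route yields more, namely uniform control of the whole expansion in complex $\sigma$, at the price of heavier machinery.

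\textbf{Two details to state precisely.}
\begin{enumerate}
\item The integration by parts reads $d(x)=-b(x)/\mathcal A(x)-\int_x^\infty (b/\mathcal A)'(s)\,e^{-\int_x^s\mathcal A}\,ds$. Your displayed version has the sign and the position of $1/\mathcal A$ slightly off, which is harmless for the estimate.
\item In the inductive step, set $G_k=\sum_{j\ge1}\binom kj\mathcal A^{(j)}d^{(k-j)}+b^{(k)}$. Its derivative $G_k'$ contains $v^{(k+2)}$, through $\mathcal A^{(k+1)}d$ and $b^{(k+1)}$, so $G_k'=O(x^{-k-1})$ does not follow from the inductive hypothesis alone. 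You must first read the differentiated equation directly: $d^{(k+1)}=\mathcal A d^{(k)}+G_k=O(x^{-k})$, hence $v^{(k+2)}=(d^{(k+1)}-kv^{(k+1)})/x=O(x^{-k-1})$. This mirrors how $v''=O(x^{-1})$ enters your base case.
\end{enumerate}
With that crude bound, $(G_k/\mathcal A)'=O(x^{-r-k-1})$, so $\mathcal A d^{(k)}+G_k=O(x^{-r-k-1})$ and the induction closes.
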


\begin{proof}
Work on a closed complex disc strictly inside the disc furnished by
the implicit-function theorem, reducing the outer disc if necessary.
The weighted $C^1$ bounds give uniformly
\[
 u_\sigma=\sigma x+O(x^{-r}),\quad
 u_\sigma'=\sigma+O(x^{-r-1}),\quad
 xu_\sigma'-u_\sigma=O(x^{-r}).
\]
For each fixed $N$, the coefficients $c_j(\sigma)$ in
\eqref{eq:coefficient-recursion} are holomorphic near $\sigma_0$;
their denominators do not vanish there.
The proof of Theorem~\ref{thm:expansion-intro} now applies uniformly
to the complex family, with the following changes made explicit.
Linearization along the segment joining $u_\sigma$ to $U_N$ uses the
same holomorphic branch of $F$. The cancellation
\eqref{eq:uniqueness-cancellation} still holds, and, on a sufficiently
distant half-line (depending on $N$),
\[
 \operatorname{Re}P_N(x)\ge \underline c\,x^r,\qquad
 |Q_N(x)|\le C_Nx^{-2},\qquad
 |E_N(x)|\le C_Nx^{-1-N(r+1)}.
\]
Here $\underline c>0$ follows by continuity from
$\kappa\sigma_0^{r-1}(1+\sigma_0^2)^{(r+1)/2}>0$.
Thus $|e^{-\int_x^sP_N}|\le e^{-\underline c\,x^r(s-x)}$, and the two integral
estimates for the remainder, including the absorption of $M_R$, are
unchanged. They yield the $C^0$ and $C^1$ remainder bounds uniformly
on the closed parameter disc.

Repeated differentiation of the ODE gives uniform polynomial bounds
for higher $x$-derivatives: $u_\sigma/x$ stays away from zero and
$u_\sigma'$ stays in the chosen holomorphic neighborhood. Applying
the local interpolation argument in the expansion proof to real and
imaginary parts gives its remainder estimates in every derivative
order, uniformly on the parameter disc. In particular,
\[
 |\partial_x^j(u_\sigma-\sigma x)|\le C_jx^{-r-j}.
\]
On the remaining compact interval these estimates follow from the
ODE and the uniform weighted $C^1$ bound.
The bound for the next derivative implies, on each dyadic interval,
\[
 [\partial_x^k(u_\sigma-\sigma x)]_{C^{0,\alpha}([x,2x])}
 \le C_{k,\alpha}x^{-r-k-\alpha}.
\]
This proves the claimed local boundedness in all the weighted spaces.
\end{proof}

\begin{theorem}\label{thm:analytic-slope-dependence}
For every compact interval $I\Subset(0,\infty)$ there is $R_I>0$
such that, for all $k\ge0$ and $0<\alpha<1$, the map
\[
 \sigma\longmapsto u_\sigma-\sigma x
 \quad\hbox{with values in }\mathcal X^{k,\alpha}_r(R_I)
\]
is real analytic on an open neighborhood of $I$.
For $0<\varepsilon<E<\infty$, the map
\[
 (0,\infty)\ni\sigma\longmapsto u_\sigma
 \in C^{k,\alpha}([\varepsilon,E])
\]
is also real analytic.
\end{theorem}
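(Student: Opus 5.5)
The plan is to assemble Theorem~\ref{thm:analytic-slope-dependence} from the local results just proved, using a standard fact: a holomorphic map into a Banach space that is locally bounded in a stronger Banach space is holomorphic there. The first step is local analyticity near each $\sigma_0$. For fixed $\sigma_0>0$, the implicit-function argument after Lemma~\ref{lem:analytic-profile-operator} gives a holomorphic family $\sigma\mapsto v_\sigma$ with values in $\mathcal D_0(R)\subset\mathcal C^1_r(R)$, on a complex disc about $\sigma_0$. Theorem~\ref{thm:uniqueness} identifies its real values with $u_\sigma-\sigma x$. Lemma~\ref{lem:uniform-holomorphic-expansion} shows the family is locally bounded in $\mathcal X^{k,\alpha}_r(R)$. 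To upgrade to holomorphy in $\mathcal X^{k,\alpha}_r(R)$, I would use the Cauchy integral formula. On a smaller disc, write $v_\sigma=\frac1{2\pi i}\oint v_\zeta(\zeta-\sigma)^{-1}\dd\zeta$. This holds in $\mathcal C^1_r$, hence pointwise for each $x$. For every $x$-derivative and H\"older quotient, I would pass derivatives under the integral pointwise; this is justified because, for fixed $x$, $\sigma\mapsto\partial_x^j v_\sigma(x)$ is holomorphic, being a locally uniform limit of holomorphic functions by the ODE. The uniform bound then shows the Cauchy integral converges in $\mathcal X^{k,\alpha}_r$. Consequently the difference quotients converge in that norm, which gives holomorphy.

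The main obstacle I expect is a convergence issue: the Bochner Cauchy integral in the weighted H\"older space need not converge when the family is only bounded there, since these spaces are nonseparable. To avoid this, I would instead expand $v_\sigma=\sum_m a_m(\sigma-\sigma_0)^m$ with coefficients $a_m$ defined pointwise through the Cauchy formula. Cauchy estimates applied to the pointwise holomorphic functions $\partial_x^jv_\sigma(x)$, weighted by $x^{r+j}$, give $\|a_m\|_{\mathcal X^{k,\alpha}_r}\le M\rho^{-m}$ from the uniform bound, and the H\"older seminorm is handled the same way through difference quotients. The series therefore converges absolutely in $\mathcal X^{k,\alpha}_r(R)$ on a disc of radius $\rho/2$, which is real analyticity.

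The next step is uniformity over a compact $I$. Lemma~\ref{lem:slope-linear-inverse} gives $R$ uniform for $\sigma_0$ in a compact slope interval, but the radius in Lemma~\ref{lem:uniform-holomorphic-expansion} depends on $\sigma_0$. I would cover $I$ by finitely many discs and take $R_I$ to be the largest of the finitely many $R$'s. Restriction from $[R,\infty)$ to $[R_I,\infty)$ is a bounded linear map, and analyticity is local, so this gives analyticity on a neighborhood of $I$.

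The last step is analyticity on $[\varepsilon,E]$. On $[R_I,R_I+1]$, the initial data $(u_\sigma(x_1),u_\sigma'(x_1))$ at $x_1=R_I+1$ depend analytically on $\sigma$, by the first part with $k=1$. The profile equation has a right-hand side that is real analytic on $\{u>0\}$. Analytic dependence of ODE solutions on initial data, with a holomorphic extension to complex data, then gives an analytic map $\sigma\mapsto u_\sigma\in C^{k,\alpha}([\varepsilon,E])$ for every $k$. This uses that $u_\sigma>0$ exists on $(0,\infty)$ by Theorem~\ref{thm:main}, so real solutions stay in a compact subset of $\{u>0\}$ over $[\varepsilon,\max(E,x_1)]$. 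Every $\sigma>0$ lies in some compact $I$, so the second claim holds on all of $(0,\infty)$.
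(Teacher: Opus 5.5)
Your proposal is correct and follows the paper's proof essentially step for step. The paper likewise avoids a Bochner integral by defining the Taylor coefficients through pointwise Cauchy integration. It uses the uniform $\mathcal X^{k,\alpha}_r$ bound of Lemma~\ref{lem:uniform-holomorphic-expansion}, together with differentiation under the integral and difference quotients, to get $\|a_j\|\le M_{k,\alpha}d^{-j}$. It then takes $R_I$ as the maximal radius over a finite cover of $I$, and propagates analyticity to $[\varepsilon,E]$ by analytic dependence on initial data at a fixed large radius.
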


\begin{proof}
Fix $\sigma_0$ and the holomorphic family in the graph domain.
On a parameter circle of radius $d$ contained in its disc,
Lemma~\ref{lem:uniform-holomorphic-expansion} gives a bound $M_{k,\alpha}$
in $\mathcal X^{k,\alpha}_r(R)$.
Pointwise Cauchy integration defines the Taylor coefficients $a_j(x)$
of $v_\sigma(x)$. Differentiation in $x$ under this integral and the
same estimate for difference quotients give
\[
 \|a_j\|_{\mathcal X^{k,\alpha}_r(R)}\le M_{k,\alpha}d^{-j}.
\]
The Taylor series therefore converges in this Banach space whenever
$|\sigma-\sigma_0|<d$, and its pointwise sum is $v_\sigma$.
This proves analyticity in every weighted derivative order. For
$\sigma$ in a compact interval $I$ we take $R_I$ as the maximum of the
radii in a finite cover of $I$; restriction to a larger half-line
preserves all assertions.

Evaluation of $u_\sigma$ and $u_\sigma'$ at a fixed large radius is
analytic. The first-order system $u'=p$, $p'=F(x,u,p)$ is real analytic
for $u>0$. On $[\varepsilon,R_I]$ the real solutions have a uniform
positive lower bound. Analytic dependence on initial data therefore
propagates the parameter analyticity to this compact interval.
\end{proof}

\begin{corollary}\label{cor:analytic-exterior-foliation}
For every compact interval $I\Subset(0,\infty)$ there is $R_I>0$
such that
\[
 \partial_\sigma u_\sigma(x)=x+O_I(x^{-r})>0
 \qquad(\sigma\in I,\ x\ge R_I).
\]
For slopes in the interior of $I$, the rotational ends form a
real-analytic foliation of their image in $\{x>R_I\}$.
The same family extends analytically to a neighborhood of $I$.
\end{corollary}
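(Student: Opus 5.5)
The plan is to differentiate the analytic family of Theorem~\ref{thm:analytic-slope-dependence} in $\sigma$ and to identify the derivative through the linearized equation. Put $v_\sigma=u_\sigma-\sigma x$. Since $\sigma\mapsto v_\sigma$ is real analytic with values in $\mathcal X^{k,\alpha}_r(R_I)$, and this space embeds continuously into weighted $C^1$ with weight $x^{r}$, the derivative $\dot v_\sigma=\partial_\sigma v_\sigma$ exists in $\mathcal C^1_r(R_I)$. Its norm is bounded uniformly for $\sigma\in I$, by the Cauchy estimates on the parameter circles used in the proof of Theorem~\ref{thm:analytic-slope-dependence}, after taking a finite cover of $I$. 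Therefore
\[
\partial_\sigma u_\sigma(x)=x+\dot v_\sigma(x),\qquad |\dot v_\sigma(x)|\le C_Ix^{-r}\quad(x\ge R_I),
\]
which is the asserted expansion. After enlarging $R_I$ so that $C_IR_I^{-r-1}<1$, positivity follows as well.

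As an alternative check, we can differentiate $\mathscr G(\sigma,v_\sigma)=0$. This gives $L_\sigma\dot v_\sigma=F_p+xF_u$ evaluated along $u_\sigma$. By the cancellation \eqref{eq:uniqueness-cancellation}, the right-hand side equals $x(F_u+F_p/x)=O(x^{-1})$, so it lies in $\mathcal C^0_1$. Lemma~\ref{lem:slope-linear-inverse} then gives $\dot v_\sigma\in\mathcal C^1_r$ with a uniform estimate. Either route yields the bound.

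For the foliation, I would consider the map $\Phi(\sigma,x,\omega)=(x,u_\sigma(x)\omega)$ on $\mathrm{int}\,I\times(R_I,\infty)\times\Sph^{n-1}$. Real analyticity of $(\sigma,x)\mapsto u_\sigma(x)$ on this set follows from Theorem~\ref{thm:analytic-slope-dependence}: we have analyticity in $\sigma$ with values in spaces controlling all $x$-derivatives, together with analyticity in $x$ from the analytic ODE. Concretely, the solution of the analytic system $u'=p$, $p'=F$ depends analytically on $x$ and on the analytic initial data at a fixed radius. Since $\partial_\sigma u_\sigma>0$, for each fixed $x$ the map $\sigma\mapsto u_\sigma(x)$ is strictly increasing. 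Hence $\Phi$ is injective, and its Jacobian is nonzero because the $\sigma$-direction is transverse, in the radial direction, to the leaves. By the analytic inverse function theorem, $\Phi$ is an analytic diffeomorphism onto its open image, and the leaves $\Sigma_\sigma\cap\{x>R_I\}$ are the level sets of the analytic submersion $\sigma\circ\Phi^{-1}$. This is exactly a real-analytic foliation of the image.

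The last sentence of the statement follows by applying the same argument on a slightly larger compact interval $I'\Supset I$ whose neighborhood is still covered by the discs of the implicit-function theorem. The main obstacle is the uniformity of the constant in $\dot v_\sigma=O_I(x^{-r})$ across $I$, that is, making sure that a single radius $R_I$ works for every slope. I would handle it with the compactness and finite-cover argument already used for $R_I$, combined with the uniform constants in Lemma~\ref{lem:slope-linear-inverse}.
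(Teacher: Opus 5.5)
Your proposal is correct and follows essentially the same route as the paper's proof. Both differentiate the analytic family in a weighted space (uniformly bounded on the compact interval $I$) and enlarge $R_I$ to get positivity. Both then obtain joint analyticity from the analytic ODE with analytic data, injectivity from monotonicity in $\sigma$, and the foliation from the analytic inverse function theorem. Your second check, through the linearized equation $L_\sigma\dot v_\sigma=F_p+xF_u=O(x^{-1})$ and the cancellation, is correct but optional; the paper does not use it.
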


\begin{proof}
Differentiate the analytic map with values in $\mathcal C^0_r(R_I)$.
Its derivative is uniformly bounded on $I$, which gives the stated
estimate and, after increasing $R_I$, the lower bound $x/2$.
The ODE is analytic in both $x$ and its initial data, so
$(x,\sigma)\mapsto u_\sigma(x)$ is jointly real analytic.
The map $(x,\sigma,\omega)\mapsto(x,u_\sigma(x)\omega)$ has nonzero
Jacobian in the slope direction and is injective on the indicated
region. The analytic inverse-function theorem gives a foliation of
its image; no claim of coverage of the entire ambient exterior is made.
\end{proof}

\begin{remark}
Analyticity and the exterior foliation are statements on distant
exterior regions. They do not imply that $u_\sigma(x)<u_{\sigma'}(x)$
for every $x>0$ when $\sigma<\sigma'$, and numerically this global
ordering appears to fail in the computed examples; see
Remark~\ref{rem:numerical-crossing}.
\end{remark}

\section{Uniqueness without rotational symmetry}
\label{sec:nonrotational}

The uniqueness established above concerns rotational profiles. We now
remove the symmetry assumption on the competing end, while retaining a
circular asymptotic cone. The proof uses the equation to obtain decay of
the normal graph and its difference from another solution, followed by
Bernstein's continuation theorem at infinity \cite{Bernstein2020}.
The argument also applies to certain noncircular cones, as explained
at the end of the section.

Throughout this section, $A=-dN$, $S_r$ is unnormalized, and
$\gamma=-c>0$. Thus the geometric equation is
\begin{equation}\label{eq:nr-shrinker}
 S_r(A)=-\gamma\ip{X}{N}.
\end{equation}
If the profile equation is written with coefficient $\kappa$, then
$\gamma=\binom{n-1}{r-1}\kappa$.

Let
\[
 C=\{\rho\theta:\rho>0,\ \theta\in\Gamma\}
\]
be an oriented regular cone, where $\Gamma\subset\Sph^n$ is a smooth
closed embedded hypersurface. Write
\[
 g_C=d\rho^2+\rho^2h,\qquad
 A_C\partial_\rho=0,\qquad
 A_C|_{T\Gamma}=\rho^{-1}K,
\]
where $K$ is the shape operator of the link. As in \eqref{eq:nr-graph-intro}, by a smoothly asymptotic
normal graph over $C$ we mean a parametrization
\begin{equation}\label{eq:nr-graph}
 X_f(\rho,\theta)=\rho\theta+f(\rho,\theta)N_C(\theta),
 \qquad
 |\nabla_C^j f|=o(\rho^{1-j})
 \quad(j\geq0).
\end{equation}
Smooth conical convergence with multiplicity one gives this description
after restricting the end. The orientation of the graph is chosen to
converge to $N_C$.

We use $O_\infty(\rho^{-a})$ for a function or tensor whose $j$th
covariant derivative has norm $O(\rho^{-a-j})$ for every $j$.
Equivalently, all conormal derivatives, generated by
$\rho\partial_\rho$ and smooth vector fields on the link, satisfy
the corresponding weighted bounds.

We prove Theorem~\ref{thm:nr-circular} below. The conclusion concerns
equality outside a compact set and does not identify arbitrary interior
extensions. No quantitative rate beyond \eqref{eq:nr-graph} is assumed.

\subsection{Normal graphs and automatic decay}

Put
\[
 B=I-fA_C,\qquad
 Z=B^{-1}\nabla_C f,\qquad
 W_f=\sqrt{1+|Z|^2}.
\]
On a sufficiently distant part of the cone, $B$ is invertible and
\[
 dX_f(U)=BU+df(U)N_C,\qquad
 N_f=\frac{N_C-Z}{W_f},
\]
\[
 g_f(U,V)=g_C(BU,BV)+df(U)df(V).
\]
Since $B\partial_\rho=\partial_\rho$, we obtain the exact identity
\[
 \ip{X_f}{N_f}=\frac{f-\rho f_\rho}{W_f}.
\]
Define
\(
 \mathcal E(f)=W_fS_r(A_f).
\)
Equation \eqref{eq:nr-shrinker} becomes
\begin{equation}\label{eq:nr-radial-identity}
 \gamma(\rho f_\rho-f)=\mathcal E(f).
\end{equation}

For later estimates, the exact second fundamental form is
\begin{equation}\label{eq:nr-second-form}
\begin{aligned}
 W_f b_f(U,V)={}&b_C(U,V)-f\,g_C(A_CU,A_CV)
                  +\nabla_C^2f(U,V)\\
 &+df(U)g_C(A_CV,Z)+df(V)g_C(A_CU,Z)\\
 &+f\,g_C((\nabla_UA_C)V,Z).
\end{aligned}
\end{equation}
Indeed, after subtracting $dX_f(\nabla_UV)$ from $D_UdX_f(V)$,
its normal component relative to the cone is
\[
 b_C(U,BV)+\nabla_C^2f(U,V),
\]
and its tangential component is
\[
 -df(U)A_CV-f(\nabla_UA_C)V-df(V)A_CU.
\]
Taking the product with $N_f$ proves
\eqref{eq:nr-second-form}. In particular, with our sign convention,
\begin{equation}\label{eq:nr-linearization-zero}
 D\mathcal E(0)[v]
 =
 \operatorname{tr}\bigl(T_{r-1}(A_C)\nabla_C^2v\bigr)
 +
 \operatorname{tr}\bigl(T_{r-1}(A_C)A_C^2\bigr)v,
\end{equation}
where $T_{r-1}$ is the Newton tensor.

\begin{lemma}\label{lem:nr-decay}
For a regular cone $C$, every solution satisfying
\eqref{eq:nr-graph} obeys
\begin{equation}\label{eq:nr-leading-graph}
 f=-\frac{S_r(K)}{\gamma(r+1)}\rho^{-r}
       +O_\infty(\rho^{-2r-1}).
\end{equation}
If $f_1,f_2$ are two such solutions over the same oriented cone, then
\begin{equation}\label{eq:nr-flat}
 f_1-f_2=O_\infty(\rho^{-M})
 \qquad\text{for every }M>0.
\end{equation}
\end{lemma}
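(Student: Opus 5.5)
Both assertions come from treating \eqref{eq:nr-radial-identity} as a first-order transport equation in $\rho$ whose source $\mathcal E(f)$ is small and improves as $f$ decays. The operator $\rho\partial_\rho-1$ can be inverted from infinity. If $\rho f_\rho-f=g$, then $(f/\rho)_\rho=g/\rho^2$. Since $f/\rho\to0$ by \eqref{eq:nr-graph}, this gives
\[
 f(\rho,\theta)=-\rho\int_\rho^\infty \frac{g(s,\theta)}{s^2}\,\dd s .
\]
Hence $g=O_\infty(\rho^{-a})$ with $a>-1$ gives $f=O_\infty(\rho^{-a})$, with gain factor $1/(a+1)$. Conormal derivatives $\rho\partial_\rho$ and link derivatives commute with this integral operator up to harmless terms, so the $O_\infty$ classes are preserved.

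\textbf{Step 1: leading term of a single solution.} Expand $\mathcal E(f)=W_fS_r(A_f)$ using \eqref{eq:nr-second-form}. The cone term gives $S_r(A_C)=\rho^{-r}S_r(K)$, which is $O_\infty(\rho^{-r})$. The remaining terms are $D\mathcal E(0)[f]$ plus quadratic terms. Under the a priori bound $|\nabla^jf|=o(\rho^{1-j})$, each term carrying $f$ is a coefficient times $f$ or its derivatives, and is smaller than $\rho^{-r}$ by a factor $o(1)$. For example, $T_{r-1}(A_C)\nabla^2 f=O(\rho^{-(r-1)})\cdot o(\rho^{-1})$.

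I would bootstrap as follows. First $\gamma(\rho f_\rho-f)=o(\rho^{-r})$-type bounds plus $O(\rho^{-r})$ give $f=O(\rho^{1-\epsilon})$. Repeating improves the exponent until $f=O_\infty(\rho^{-r})$. The first step needs care, because the a priori bound is only $o(\rho)$, not a power. Here I would first obtain $\mathcal E(f)=o(1)\cdot(\rho^{-r}+|f|\rho^{-r-1}+\dots)$ and use the explicit integral to improve $o(\rho)$ to $O(\rho^{-r})$ directly. With $f=O_\infty(\rho^{-r})$ in hand, $\mathcal E(f)-\rho^{-r}S_r(K)=O_\infty(\rho^{-r-1}\cdot\rho^{-r})$. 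Applying the inverse with $g=\rho^{-r}S_r(K)/\gamma$ gives the explicit leading term $-S_r(K)\rho^{-r}/(\gamma(r+1))$ and remainder $O_\infty(\rho^{-2r-1})$, which is \eqref{eq:nr-leading-graph}. Derivative control comes from applying conormal derivatives to the equation: each derivative of $f$ appears in $\mathcal E$ only multiplied by decaying coefficients, and the $o(\rho^{1-j})$ hypothesis supplies all derivatives a priori.

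\textbf{Step 2: flatness of the difference.} Set $w=f_1-f_2$. By the fundamental theorem of calculus along $f_t=tf_1+(1-t)f_2$,
\[
 \gamma(\rho w_\rho-w)=\int_0^1 D\mathcal E(f_t)[w]\,\dd t=a^{ij}\nabla^2_{ij}w+b^i\nabla_iw+cw .
\]
By Step 1, the coefficients satisfy $a=O_\infty(\rho^{-(r-1)})$, $b=O_\infty(\rho^{-r})$ and $c=O_\infty(\rho^{-r-1})$. Hence if $w=O_\infty(\rho^{-a})$, the right side is $O_\infty(\rho^{-a-r-1})$, and the inversion gives $w=O_\infty(\rho^{-a-r-1})$. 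Starting from $a=r$, iteration yields \eqref{eq:nr-flat}.

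\textbf{Main obstacle.} The delicate point is the initial step of Step 1, where only $o(\rho^{1-j})$ is known. There, the nonlinear terms in $S_r(A_f)$ must be shown to be $o(\rho^{-r})$-relative small with uniform constants, uniformly in all derivatives. I would organize this with the scaling $f(\rho\cdot)/\rho$ on dyadic annuli. On these annuli the rescaled graphs converge smoothly to $0$, so $\mathcal E$ becomes an analytic function of small rescaled data, and the estimates are uniform.
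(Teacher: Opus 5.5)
Your proposal is correct and follows essentially the same route as the paper. You invert $\rho\partial_\rho-1$ from infinity via $f=-\frac{\rho}{\gamma}\int_\rho^\infty \mathcal E(f)\,t^{-2}\,dt$, Taylor-expand $\rho^r\mathcal E(f)$ in the dimensionless jets $(f/\rho,\nabla_Cf,\rho\nabla_C^2f)$ on rescaled annuli to extract the leading term, and iterate the difference equation with coefficients $O_\infty(\rho^{1-r})$, $O_\infty(\rho^{-r})$, $O_\infty(\rho^{-r-1})$.

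One simplification: no bootstrap through $O(\rho^{1-\epsilon})$ is needed. The dyadic rescaling you describe already gives $\mathcal E(f)=O_\infty(\rho^{-r})$ from the bounded jets, so a single inversion yields $f=O_\infty(\rho^{-r})$. This is exactly how the paper proceeds.
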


\begin{proof}
Let $\mathscr D$ denote the algebra of conormal differential operators
generated by $\rho\partial_\rho$ and by smooth vector fields on the
link $\Gamma$. Thus, for a scalar function $q$,
\(
q=O_\infty(\rho^{-\alpha})
\)
means that
\(
|\mathcal Dq|\le C_{\mathcal D}\rho^{-\alpha}
\)
for every $\mathcal D\in\mathscr D$. This formulation is equivalent
to the covariant-derivative definition of $O_\infty$ on the cone.

The graph condition \eqref{eq:nr-graph}, together with the exact graph
formulas, implies
\[
\mathcal E(f)=O_\infty(\rho^{-r}).
\]
Indeed, on every rescaled annulus $\{\rho\le |X|\le 2\rho\}$, the
quantity $\rho^r\mathcal E(f)$ is a smooth function of $\theta$ and of
the dimensionless graph jets
\[
\frac{f}{\rho},\qquad
\nabla_C f,\qquad
\rho\nabla_C^2f,
\]
and their conormal derivatives. These jets are bounded and converge
to zero, with all conormal derivatives, by
\eqref{eq:nr-graph}.

Dividing \eqref{eq:nr-radial-identity} by $\rho^2$ gives
\[
\partial_\rho\left(\frac{f}{\rho}\right)
=
\frac{\mathcal E(f)}{\gamma\rho^2}.
\]
Since $f/\rho\to0$ as $\rho\to\infty$, integration from $\rho$ to
infinity yields
\begin{equation}\label{eq:nr-integral}
 f(\rho,\theta)
 =
 -\frac{\rho}{\gamma}
 \int_\rho^\infty
 \frac{\mathcal E(f)(t,\theta)}{t^2}\,dt.
\end{equation}
The integral is absolutely convergent because
$\mathcal E(f)=O_\infty(\rho^{-r})$ and $r\ge1$.

We record the elementary mapping property that will be used repeatedly:
if $q=O_\infty(\rho^{-\alpha})$, with $\alpha>-1$, then
\[
\mathcal Iq(\rho,\theta)
:=
\rho\int_\rho^\infty\frac{q(t,\theta)}{t^2}\,dt
=
O_\infty(\rho^{-\alpha}).
\]
For angular conormal fields this follows by differentiating under the
integral sign. For the radial conormal field one has the exact identity
\[
\rho\partial_\rho(\mathcal Iq)
=
\mathcal Iq-q.
\]
Iteration of this identity proves the assertion for arbitrary
conormal derivatives. Applying it to \eqref{eq:nr-integral} gives
\[
f=O_\infty(\rho^{-r}).
\]

We now improve this first estimate. On rescaled annuli,
$\mathcal E(f)$ has the form
\[
\mathcal E(f)
=
\rho^{-r}
\mathscr F\left(
 \theta,\frac{f}{\rho},\nabla_Cf,\rho\nabla_C^2f
\right),
\]
where $\mathscr F$ is smooth in its jet variables. The estimate just
obtained implies that every dimensionless graph jet is
$O_\infty(\rho^{-r-1})$. At the zero graph,
\[
\mathscr F(\theta,0,0,0)=S_r(K)(\theta).
\]
Taylor expansion in the jet variables, uniformly on the compact link,
therefore gives
\[
\mathcal E(f)
=
\rho^{-r}S_r(K)
+
O_\infty(\rho^{-2r-1}).
\]
Substituting this expansion into \eqref{eq:nr-integral} and using
\[
\rho\int_\rho^\infty t^{-r-2}\,dt
=
\frac{1}{r+1}\rho^{-r},
\]
we obtain
\[
f(\rho,\theta)
=
-\frac{S_r(K)(\theta)}{\gamma(r+1)}\rho^{-r}
+
O_\infty(\rho^{-2r-1}),
\]
which is \eqref{eq:nr-leading-graph}.

Let now $f_1$ and $f_2$ be two solutions over the same oriented cone,
and set
\[
w=f_1-f_2,
\qquad
f_\tau=f_2+\tau w,
\qquad 0\le\tau\le1.
\]
Subtracting their equations and applying the fundamental theorem of
calculus in the graph variable gives the exact equation
\begin{equation}\label{eq:nr-difference}
 \gamma(\rho w_\rho-w)=\mathcal Pw,
 \qquad
 \mathcal P=\int_0^1D\mathcal E(f_\tau)\,d\tau.
\end{equation}
In covariant form,
\[
\mathcal Pw
=
a^{ij}\nabla_{C,i}\nabla_{C,j}w
+b^i\nabla_{C,i}w
+dw,
\]
where, with all conormal derivatives,
\[
a=O_\infty(\rho^{1-r}),
\qquad
b=O_\infty(\rho^{-r}),
\qquad
d=O_\infty(\rho^{-r-1}).
\]
These estimates follow from the smooth dependence of $\mathcal E$ on
the dimensionless graph jets and from the uniform estimates for the
segment $\{f_\tau:0\le\tau\le1\}$.

So, for every $\alpha>0$,
\begin{equation}\label{eq:nr-decay-improvement}
w=O_\infty(\rho^{-\alpha})
\quad\text{implies}\quad
\mathcal Pw
=
O_\infty(\rho^{-\alpha-r-1}).
\end{equation}
Here the two covariant derivatives in the principal term contribute
$\rho^{-\alpha-2}$, which, combined with
$a=O_\infty(\rho^{1-r})$, gives precisely
$\rho^{-\alpha-r-1}$. The first- and zeroth-order terms have the same
decay.

As before, the condition $w/\rho\to0$ allows us to integrate
\eqref{eq:nr-difference}:
\[
w(\rho,\theta)
=
-\frac{\rho}{\gamma}
\int_\rho^\infty
\frac{\mathcal Pw(t,\theta)}{t^2}\,dt.
\]
The mapping property of $\mathcal I$ and
\eqref{eq:nr-decay-improvement} therefore show that
\[
w=O_\infty(\rho^{-\alpha})
\quad\text{implies}\quad
w=O_\infty(\rho^{-\alpha-r-1}).
\]
Initially, $w=O_\infty(\rho^{-r})$ because both $f_1$ and $f_2$ have
that decay. In fact, the common leading term already gives
$w=O_\infty(\rho^{-2r-1})$, but the weaker initial estimate suffices.
Iterating the preceding implication yields
\[
w=O_\infty\bigl(\rho^{-r-j(r+1)}\bigr)
\qquad\text{for every }j\ge0.
\]
Given any $M>0$, choosing $j$ sufficiently large proves
\[
f_1-f_2=w=O_\infty(\rho^{-M}),
\]
which is \eqref{eq:nr-flat}.
\end{proof}

Notice that flatness at infinity alone does not imply equality.
The remaining argument supplies the required continuation principle.

\subsection{Bernstein's theorem on weakly conical ends}
We recall the result that provides this principle. Following
\cite[Section~2, p.~1899]{Bernstein2020}, a triple $(E,g,\mathcal R)$,
where $E$ is a smooth $n$-dimensional manifold with $n\ge2$, $g$ a $C^1$
Riemannian metric on $E$, and $\mathcal R:E\to(\mathcal R_E,\infty)$, with
$\mathcal R_E\ge1$, a proper unbounded $C^2$ function, is a
\emph{weakly conical end} if there is $\Lambda\ge0$ such that
\begin{equation}\label{eq:weakly-conical}
\bigl||\nabla_g\mathcal R|-1\bigr|\le\frac{\Lambda}{\mathcal R^{4}}\le\frac12,
\qquad
\bigl|\nabla_g^2\mathcal R^2-2g\bigr|\le\frac{\Lambda}{\mathcal R^{2}}\le\frac12 .
\end{equation}
Write $\partial_{\mathcal R}=\nabla_g\mathcal R$ and
$\mathcal L_0=\Delta_g-\frac{\mathcal R}{2}\partial_{\mathcal R}$.

\begin{theorem}[{Bernstein \cite[Theorem~1.2, p.~1898]{Bernstein2020}}]
\label{thm:bernstein}
Let $(E,g,\mathcal R)$ be a weakly conical end and let $u\in C^2(E)$ satisfy,
for some constants $\varpi\in\R$ and $M\ge0$,
\[
\bigl|(\mathcal L_0+\varpi)u\bigr|\le M\mathcal R^{-2}\bigl(|u|+|\nabla_gu|\bigr)
\quad\text{and}\quad
\int_E\bigl(|\nabla_gu|^2+u^2\bigr)\mathcal R^{2-4\varpi}
e^{-\mathcal R^2/4}<\infty .
\]
Then there are constants $R_0$ and $K_0$, depending on $u$, such that for
every $R\ge R_0$
\[
\int_{\{\mathcal R\ge R\}}\Bigl(u^2+\mathcal R^2|\nabla_gu|^2
+\mathcal R^4\bigl(\partial_{\mathcal R}u-\tfrac{2\varpi}{\mathcal R}u\bigr)^2\Bigr)
\mathcal R^{-1-n-4\varpi}
\le\frac{K_0}{R^{n+4\varpi}}\int_{\{\mathcal R=R\}}u^2 .
\]
Moreover, $u$ is asymptotically homogeneous of degree $2\varpi$, with
asymptotic trace $a\in H^1(L(E))$ on the link $L(E)$ of the asymptotic
cone, and
\[
\alpha^2:=\lim_{t\to\infty}t^{1-n-4\varpi}\int_{\{\mathcal R=t\}}u^2
=\int_{L(E)}a^2 .
\]
Finally, if $\mathcal U$ denotes the leading term of $u$, then for $R\ge R_0$
\[
\int_{\{\mathcal R\ge R\}}\Bigl(u^2+\mathcal R^2\bigl(|u-\mathcal U|^2+|\nabla_gu|^2\bigr)
+\mathcal R^4\bigl(\partial_{\mathcal R}u-\tfrac{2\varpi}{\mathcal R}u\bigr)^2\Bigr)
\mathcal R^{-2-n-4\varpi}
\le\frac{K_0\,\alpha^2}{R^{2}} .
\]
\end{theorem}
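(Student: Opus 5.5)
Since Theorem~\ref{thm:bernstein} is quoted from \cite{Bernstein2020}, the paper itself uses it as given; here is how I would prove it. The idea is to show that the drift $-\frac{\mathcal R}{2}\partial_{\mathcal R}$ dominates $\Delta_g$ at infinity, so the inequality forces $\frac{\mathcal R}{2}\partial_{\mathcal R}u\approx\varpi u$. Integrating this radial ODE gives homogeneity of degree $2\varpi$ and an asymptotic trace. The monotone quantity to control is
\[
F(t)=t^{1-n-4\varpi}\int_{\{\mathcal R=t\}}u^2 ,
\]
together with the bulk energy of the defect $v=\partial_{\mathcal R}u-\frac{2\varpi}{\mathcal R}u$.

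\textbf{Step 1: first variation of $F$.} I would differentiate $F$ using the coarea formula and \eqref{eq:weakly-conical}. The conditions $|\nabla_g\mathcal R|=1+O(\mathcal R^{-4})$ and $\nabla^2_g\mathcal R^2=2g+O(\mathcal R^{-2})$ give the mean curvature of the level sets as $(n-1)/\mathcal R+O(\mathcal R^{-3})$. This should yield
\[
F'(t)=2t^{1-n-4\varpi}\int_{\{\mathcal R=t\}}u\,v+O(t^{-3})F(t).
\]
Hence $|F'|\lesssim F^{1/2}G^{1/2}+t^{-3}F$, where $G(t)=t^{1-n-4\varpi}\int_{\{\mathcal R=t\}}v^2$.

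\textbf{Step 2: weighted energy identity for $v$.} This is the main obstacle. I would multiply $(\mathcal L_0+\varpi)u$ by $v$ times a weight $\mathcal R^{a}e^{-\mathcal R^2/4}$ and integrate over $\{\mathcal R\ge R\}$. The weight is chosen so that $e^{-\mathcal R^2/4}$ makes $\mathcal L_0$ self-adjoint, and $a$ is tuned to $2\varpi$. After integrating by parts, the drift term produces $\tfrac{\mathcal R}{2}v$ paired with $v$, giving the good term $\int \mathcal R^{\cdots}v^2$. The tangential Laplacian contributes terms controlled by $\mathcal R^{-2}|\nabla u|^2$. The commutator of $\partial_{\mathcal R}$ with $\Delta_g$ involves $\nabla^2_g\mathcal R^2-2g$, which the weakly conical bounds make lower order. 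The right-hand side $M\mathcal R^{-2}(|u|+|\nabla u|)$ is absorbed the same way.

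The Gaussian integrability hypothesis is exactly what is needed to discard the boundary terms at infinity. I would justify this with cutoffs $\chi(\mathcal R/T)$ and let $T\to\infty$. The outcome should be
\[
\int_{\{\mathcal R\ge R\}}\mathcal R^{4}v^2\,\mathcal R^{-1-n-4\varpi}\lesssim R^{-n-4\varpi}\int_{\{\mathcal R=R\}}u^2+\text{(absorbable terms)}.
\]
A parallel gradient identity, obtained by multiplying by $u$ instead of $v$, controls $\mathcal R^2|\nabla_gu|^2$ and $u^2$. Together these give the first displayed estimate with constants $R_0$, $K_0$. The delicate points are the sign of the leading quadratic form and absorbing the error terms for $R\ge R_0$.

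\textbf{Step 3: trace and refined estimate.} Step 2 makes $G(t)$ integrable against $t^{3}\,dt$. Combined with Step 1, this makes $\log F$ (or $F$ itself) Cauchy as $t\to\infty$, so $\alpha^2=\lim F(t)$ exists. I would pull $u$ back to $L(E)$ along the flow of $\partial_{\mathcal R}/|\nabla\mathcal R|^2$ and form $\mathcal R^{-2\varpi}u$. Its radial derivative is $\mathcal R^{-2\varpi}v$, which is square-integrable in the weighted sense, so the functions converge in $L^2(L(E))$ to some $a$. The gradient bound upgrades this to $a\in H^1$, and $\int_{L(E)}a^2=\alpha^2$.

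Finally, set $\mathcal U=\mathcal R^{2\varpi}a$, transported back to $E$. Writing $u-\mathcal U$ as the integral from infinity of $\mathcal R^{2\varpi}\partial_{\mathcal R}(\mathcal R^{-2\varpi}u)$ and applying a Hardy-type inequality, the energy bound of Step 2 together with $F\to\alpha^2$ gives the last estimate, with the extra factor $R^{-2}$.
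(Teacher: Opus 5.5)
The paper quotes this result from \cite{Bernstein2020} without proof, so the comparison here is with Bernstein's argument. Your Steps 1 and 3 are sound reductions. Step 2, however, carries the whole content of the theorem, and the mechanism you propose for it does not work.

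Since $(\mathcal L_0+\varpi)u=\Delta_gu-\tfrac{\mathcal R}{2}v$, pairing with $v\,\mathcal R^{a}e^{-\mathcal R^2/4}$ puts everything into $\int v\,\Delta_gu\,\mathcal R^{a}e^{-\mathcal R^2/4}$. Integrate this by parts (a Rellich--Pohozaev computation), and write $\nabla^{T}u=\nabla_gu-(\partial_{\mathcal R}u)\nabla_g\mathcal R$. The derivative of the Gaussian weight then produces, among other bulk terms, $\tfrac{\mathcal R}{4}\bigl((\partial_{\mathcal R}u)^2-|\nabla^{T}u|^2\bigr)$ and a multiple of $\varpi\mathcal R u^2$, times the weight. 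It also produces boundary terms on $\{\mathcal R=R\}$ containing $|\nabla_gu|^2$, $(\partial_{\mathcal R}u)^2$ and $u^2$. In particular, the tangential contribution has size $\mathcal R|\nabla^{T}u|^2$, not $\mathcal R^{-2}|\nabla_gu|^2$ as you assert.

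Consider model solutions $u=\mathcal R^{2\varpi}\bigl(\phi(\theta)+O(\mathcal R^{-2})\bigr)$ on an exact cone, with $\phi$ nonconstant. Then $v=O(\mathcal R^{2\varpi-3})$, so these terms exceed the good term $\tfrac{\mathcal R}{2}v^2$ by factors of order $\mathcal R^{4}$ or more. They are balanced against the boundary terms only through cancellations that term-by-term estimation destroys. Dropping the favourably signed terms and estimating the rest leaves, at best, a bound of $\int_{\{\mathcal R\ge R\}}\mathcal R^{1+a}v^2e^{-\mathcal R^2/4}$ by $R^{a}e^{-R^2/4}\int_{\{\mathcal R=R\}}\bigl(u^2+|\nabla_gu|^2\bigr)$. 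That gives no decay of $v$ relative to $u$, and $\nabla_gu$ is still on the right. Moreover, $e^{-\mathcal R^2/4}$ concentrates every integral over $\{\mathcal R\ge R\}$ in a shell of width $O(R^{-1})$ around $\{\mathcal R=R\}$. So a single identity of this type sees only the scale $R$ and cannot produce the polynomially weighted integral over all of $\{\mathcal R\ge R\}$ in the first estimate.

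The missing idea is the one Bernstein's proof is built on: a weighted variant of Almgren's frequency function. It tracks this bulk--boundary balance at every level $\rho$ rather than in one identity. For instance, with $\tilde u=\mathcal R^{-2\varpi}u$ one has $(\mathcal L_0+\varpi)u=\mathcal R^{-2\varpi}e^{\mathcal R^2/4}\operatorname{div}_g\bigl(\mathcal R^{4\varpi}e^{-\mathcal R^2/4}\nabla_g\tilde u\bigr)+O(\mathcal R^{-2})u$. Hence the renormalized energy $D(\rho)=e^{\rho^2/4}\int_{\{\mathcal R\ge\rho\}}|\nabla_g\tilde u|^2\mathcal R^{4\varpi}e^{-\mathcal R^2/4}\ge0$ equals $-\int_{\{\mathcal R=\rho\}}u\,v$ up to controlled errors, and by your Step 1 it governs $F'$.

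This is where your remark about the integrability hypothesis becomes substantive. Removing the boundary term at infinity is exactly what fails for solutions of $(\mathcal L_0+\varpi)u=0$ on a cone behaving like $e^{\mathcal R^2/4}\mathcal R^{-n-2\varpi}$, and these violate every conclusion of the theorem.

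The heart of such an argument is a differential inequality in $\rho$ for the frequency $\rho D(\rho)/\int_{\{\mathcal R=\rho\}}u^2$. It comes from $D'$, a Rellich--Pohozaev identity on level sets, and Cauchy--Schwarz, and the point is to show that the frequency tends to zero. This control of the energy by the height at every large scale, integrated in $\rho$, is what gives the first estimate with only $\int_{\{\mathcal R=R\}}u^2$ on the right.

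Given that estimate, your Step 3 goes through. It yields $\int_R^\infty t^{2}G(t)\,dt\lesssim R^{-1}F(R)$ (with $t^2$, not $t^3$). Together with Step 1 this gives both the existence of $\alpha^2=\lim F$ and $F(R)\lesssim\alpha^2$, and hence the last estimate.
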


In particular, if $\alpha=0$, the last estimate forces $u\equiv0$ on
$\{\mathcal R\ge R_0\}$. This is the unique continuation principle at
infinity that we need; it is also the way in which Bernstein deduces the
uniqueness of asymptotically conical self-shrinkers
\cite[Theorem~1.1, p.~1897, and Section~8, pp.~1920--1921]{Bernstein2020}.

\subsection{The effective metric for a circular cone}

For $C=\calC_\sigma$, the principal curvatures of its link equal
$k=\sigma^{-1}$. Define
\[
 a_R=\binom{n-1}{r-1}k^{r-1},\qquad
 a_T=\binom{n-2}{r-1}k^{r-1},\qquad
 \beta=\frac{r+1}{2}.
\]
Notice that $2\beta=r+1=q$, where $q$ is the asymptotic
reparametrization exponent introduced in Section~\ref{sec:expansion}.
Thus the radial change of variables below is governed by the same
homogeneity that determines the powers in the profile expansion.
Both constants are positive in the range $1\le r\le n-1$, and
\[
 \frac{a_T}{a_R}=\frac{n-r}{n-1}.
\]
Equation \eqref{eq:nr-linearization-zero} reads
\begin{equation}\label{eq:nr-cone-operator}
\begin{aligned}
 D\mathcal E(0)[w]={}&
 a_R\rho^{1-r}w_{\rho\rho}
 +(n-1)a_T\rho^{-r}w_\rho
 +a_T\rho^{-r-1}\Delta_h w\\
 &+(n-1)a_Tk^2\rho^{-r-1}w.
\end{aligned}
\end{equation}
In particular, the scaled principal symbol is positive definite.
The same holds for $\mathcal P$ sufficiently far out.

We first record a tensorial identity for links with constant principal
curvatures.

\begin{lemma}\label{lem:spectral-cancellation}
Let $\Gamma\subset\Sph^n$ be a smooth hypersurface whose principal
curvatures, with their multiplicities, are constant, and let $K$ be its
shape operator. Then, for every polynomial $p$ and every vector $Y$
tangent to $\Gamma$,
\[
 \operatorname{tr}\bigl(p(K)\nabla_YK\bigr)=0,
 \qquad
 \sum_i p(k_i)\,\ip{(\nabla_{e_i}K)e_i}{Y}_h=0,
\]
where $(e_i)$ is a local $h$-orthonormal frame of principal directions.
\end{lemma}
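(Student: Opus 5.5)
The first identity will follow from the invariance of the trace of $p(K)\nabla_Y K$ under the choice of frame, combined with the constancy of the spectrum. Since the principal curvatures and their multiplicities are constant, the eigenspace projections of $K$ are smooth. The Codazzi-free identity $\operatorname{tr}(p(K)\nabla_YK)=\nabla_Y\operatorname{tr}(P(K))$ holds whenever $P'=p$, because the trace commutes with covariant differentiation, and cyclicity of the trace gives $\nabla_Y\operatorname{tr}(K^{m})=m\operatorname{tr}(K^{m-1}\nabla_YK)$. Because $\operatorname{tr}P(K)=\sum_i P(k_i)$ is constant on $\Gamma$, its derivative vanishes. The plan is therefore to take $P$ a polynomial antiderivative of $p$, expand $P$ in monomials, and apply the cyclic-trace identity to each monomial.

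For the second identity, the plan is to use the Codazzi equation for $\Gamma\subset\Sph^n$. Since the ambient sphere has constant curvature, $\nabla K$ is totally symmetric: $\ip{(\nabla_{e_i}K)e_j}{Y}_h=\ip{(\nabla_YK)e_i}{e_j}_h$ for tangent vectors. Setting $j=i$ and summing with weights $p(k_i)$, I will obtain
\[
\sum_ip(k_i)\ip{(\nabla_{e_i}K)e_i}{Y}_h=\sum_i p(k_i)\ip{(\nabla_YK)e_i}{e_i}_h .
\]
Because $e_i$ is a principal frame, $p(k_i)e_i=p(K)e_i$, so the right-hand side equals $\sum_i\ip{(\nabla_YK)e_i}{p(K)e_i}_h=\operatorname{tr}(p(K)\nabla_YK)$, using self-adjointness of $p(K)$. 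The first identity then shows this vanishes.

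The only point needing care is the first identity, specifically the derivative of the trace of a polynomial in a self-adjoint endomorphism field. I will prove it directly by the Leibniz rule, $\nabla_Y(K^m)=\sum_{a+b=m-1}K^a(\nabla_YK)K^b$, and then take the trace and use cyclicity. This requires no assumption on the smoothness of the eigenframe. The frame $(e_i)$ enters only pointwise in the second identity, so local smoothness of the principal frame is not needed either. I expect no real obstacle; the main thing to check is the sign and symmetry conventions of Codazzi for the link's shape operator in $\Sph^n$, where the Gauss–Codazzi curvature term vanishes because the sphere is a space form.
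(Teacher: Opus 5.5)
Your proposal is correct and follows essentially the same route as the paper. The first identity comes from an antiderivative $\Phi'=p$, with $\operatorname{tr}(p(K)\nabla_YK)=Y\bigl(\operatorname{tr}\Phi(K)\bigr)$ and constancy of $\sum_i\Phi(k_i)$; the second comes from the total symmetry of $\nabla K$ (Codazzi in $\Sph^n$), which reduces it to the first. Your explicit Leibniz-rule and cyclic-trace verification only spells out a step the paper leaves implicit.
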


\begin{proof}
Let $\Phi$ be a polynomial with $\Phi'=p$. Then
$\operatorname{tr}(p(K)\nabla_YK)=Y\bigl(\operatorname{tr}\Phi(K)\bigr)$,
and $\operatorname{tr}\Phi(K)=\sum_i\Phi(k_i)$ is constant on $\Gamma$.
For the second identity, $\nabla_{e_i}K$ is symmetric, and the Codazzi
equation of $\Gamma$ in the unit sphere gives
$\ip{(\nabla_{e_i}K)e_i}{Y}_h=\ip{(\nabla_{e_i}K)Y}{e_i}_h
=\ip{(\nabla_YK)e_i}{e_i}_h$. Summing with the weights $p(k_i)$ yields
$\operatorname{tr}(p(K)\nabla_YK)=0$.
\end{proof}

The next lemma makes precise in which sense the operator $\mathcal P$
of \eqref{eq:nr-difference} is a small perturbation of the
linearization at a radial function. Let $e_\rho=\partial_\rho$ and let
$e_a=\rho^{-1}\epsilon_a$, where $(\epsilon_a)$ is a local
$h$-orthonormal frame on the link, so that $(e_\rho,e_a)$ is
$g_C$-orthonormal. Write
\[
 \mathcal P w
 =
 \mathsf a^{\rho\rho}\nabla^2_{\rho\rho}w
 +2\mathsf a^{\rho a}\nabla^2_{\rho a}w
 +\mathsf a^{ab}\nabla^2_{ab}w
 +\mathsf b^\rho\nabla_\rho w+\mathsf b^a\nabla_aw+\mathsf d\,w
\]
in this frame, with covariant derivatives of $g_C$.

\begin{lemma}\label{lem:nr-coefficients}
Let $C$ be the cone over a link $\Gamma$ with constant principal
curvatures, let $F=-\Theta\rho^{-r}$ with
$\Theta=S_r(K)/(\gamma(r+1))$, and let $f_t=f_2+tw$ as above. Then:
\begin{enumerate}
\item at $F$, the mixed coefficient and the angular first-order
coefficient of $D\mathcal E(F)$ vanish. Its radial and zeroth-order
coefficients depend only on $\rho$, while its angular principal block
is a spectral function of $K$ with scalar coefficients depending only
on $\rho$. In a local principal frame its entries depend only on
$\rho$; this does not imply that $K$ is parallel or that the entries
are radial in an arbitrary orthonormal frame;
\item each coefficient of $\mathcal P$ differs from the corresponding
coefficient of $D\mathcal E(F)$ by $\rho^{1-r}O_\infty(\rho^{-2r-2})$ for
the second-order coefficients, by $\rho^{-r}O_\infty(\rho^{-2r-2})$ for
the first-order coefficients, and by $\rho^{-r-1}O_\infty(\rho^{-2r-2})$
for the zeroth-order coefficient.
\end{enumerate}
\end{lemma}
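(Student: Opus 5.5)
For part (i) I will compute $D\mathcal E(F)$ directly from the exact graph formulas of the preceding subsection, with $F=-\Theta\rho^{-r}$ radial. Because $F$ depends only on $\rho$, we have $\nabla_CF=F'(\rho)e_\rho$ and $B=I-FA_C$. Since $A_C e_\rho=0$, $B$ fixes $e_\rho$ and acts on $T\Gamma$ as $I-F\rho^{-1}K$. Hence $Z=B^{-1}\nabla_CF=F'e_\rho$ and $W_F=\sqrt{1+F'^2}$ are radial. Using $\nabla_{e_\rho}A_C=-\rho^{-1}A_C$ (homogeneity), the term $g_C((\nabla_UA_C)V,Z)$ in \eqref{eq:nr-second-form} vanishes for $U,V$ tangent to the link. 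Together with $A_C e_\rho=0$, this shows that $A_F$ is block diagonal: its radial entry is $-F''/W^3$ and its angular block is $\phi(\rho,K)$, a rational function of $K$ with radial coefficients, built from $\rho^{-1}K$, $F\rho^{-2}K^2$ and $F'$.

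I will then linearize $\mathcal E(F+sv)$ in $s$, using that $D S_r(A)[\dot A]=\operatorname{tr}(T_{r-1}(A)\dot A)$. The Newton tensor $T_{r-1}(A_F)$ is block diagonal, with angular block a spectral function of $K$. The variation $\dot A$ involves $\nabla^2_Cv$, $dv$ through $Z$, and $v$. Its mixed entries $\dot A_{\rho a}$ pair against the vanishing mixed entries of $T_{r-1}$, so there is no mixed second-order coefficient. The angular first-order coefficient collects two kinds of terms: traces $\operatorname{tr}(p(K)\nabla_YK)$ coming from $\nabla_{e_a}$ of the angular block, and terms of the form $\sum_ip(k_i)\langle(\nabla_{e_i}K)e_i,Y\rangle$ coming from the cross terms $df(U)g_C(A_CV,Z)$ and from $f\,g_C((\nabla A_C)V,\dot Z)$. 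Lemma~\ref{lem:spectral-cancellation} kills both kinds.

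The remaining coefficients come out as follows. The radial second-order and first-order coefficients and the zeroth-order coefficient are traces of spectral functions of $K$ with radial coefficients, so they are constant on $\Gamma$ (the principal curvatures are constant) and depend only on $\rho$. The angular principal block is $T_{r-1}$ restricted to $T\Gamma$, conjugated by $B^{-1}$, which is again spectral in $K$. I need to be careful to phrase this frame-dependently: in a principal frame the entries are functions of $\rho$, but $K$ need not be parallel.

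For part (ii) I will use Lemma~\ref{lem:nr-decay}. By \eqref{eq:nr-leading-graph}, $f_2-F=O_\infty(\rho^{-2r-1})$, and by \eqref{eq:nr-flat}, $w=O_\infty(\rho^{-M})$. So every $f_t$ satisfies $f_t-F=O_\infty(\rho^{-2r-1})$, and the dimensionless jets $(f/\rho,\nabla_Cf,\rho\nabla_C^2f)$ of $f_t$ differ from those of $F$ by $O_\infty(\rho^{-2r-2})$. On rescaled annuli, $D\mathcal E(f)$ has coefficients of the form $\rho^{1-r}\mathscr A$, $\rho^{-r}\mathscr B$ and $\rho^{-r-1}\mathscr D$, with $\mathscr A,\mathscr B,\mathscr D$ smooth in $\theta$ and in these jets. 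The mean value theorem in the jet variables, applied uniformly on the compact link and commuted with conormal derivatives, then gives the stated differences. Averaging over $t\in[0,1]$ preserves these bounds.

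I expect the main obstacle to be the vanishing of the angular first-order coefficient in (i). The contributions $\nabla K$ appear in several places (the variation of $B^{-1}$, the $Z$-terms, and $\nabla A_C$), and each one has to be brought into one of the two forms covered by Lemma~\ref{lem:spectral-cancellation}, using Codazzi on the link.
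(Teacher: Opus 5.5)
Your proposal is correct and follows the paper's proof. For (i), the graph metric, shape operator and Newton tensor at the radial graph $F$ are block diagonal with spectral angular blocks, and Lemma~\ref{lem:spectral-cancellation} removes every contraction of $\nabla K$. For (ii), the rescaled coefficients depend smoothly on the dimensionless jets, and $\mathcal J(f_t)-\mathcal J(F)=O_\infty(\rho^{-2r-2})$ by Lemma~\ref{lem:nr-decay}.

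One harmless slip: the term $f\,g_C((\nabla_UA_C)V,Z)$ of \eqref{eq:nr-second-form} does not vanish for $U,V$ tangent to the link. Codazzi together with your own identity gives $(\nabla_UA_C)e_\rho=(\nabla_{e_\rho}A_C)U=-\rho^{-1}A_CU$, so this term contributes $-FF'\rho^{-2}g_C(KU,V)$ to the angular block of $W_Fb_F$. That contribution is still spectral in $K$ with coefficients depending only on $\rho$, so your conclusion is unaffected.
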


\begin{proof}
On a rescaled annulus $\{\rho\le|X|\le2\rho\}$, $\rho^{r-1}$ times the
second-order coefficients of $D\mathcal E(f)$, $\rho^{r}$ times its
first-order coefficients, and $\rho^{r+1}$ times its zeroth-order
coefficient are smooth functions of $\theta$ and of the dimensionless
jets $\mathcal J(f)=(f/\rho,\nabla_Cf,\rho\nabla_C^2f)$; indeed,
$\mathcal E$ is a second-order operator in $f$, homogeneous under the
dilations of the cone.

(i) For the radial function $F$, the graph metric, its shape operator
and its Newton tensor are block diagonal with respect to the splitting
into the radial direction and $T\Gamma$, and the angular blocks are
spectral functions of $K$ with coefficients depending only on $\rho$.
Hence the mixed coefficient vanishes. Every term of the angular
first-order coefficient contains either an angular derivative of $F$,
which vanishes, or a contraction of $\nabla K$ with spectral functions of
$K$, which vanishes by Lemma~\ref{lem:spectral-cancellation}. For the circular cone this
also follows from $O(n)$-invariance: an $O(n)$-invariant vector field on
$\Sph^{n-1}$ vanishes.

(ii) By \eqref{eq:nr-leading-graph},
$f_i=F+O_\infty(\rho^{-2r-1})$ for $i=1,2$, hence
$\mathcal J(f_t)-\mathcal J(F)=O_\infty(\rho^{-2r-2})$ uniformly in
$t\in[0,1]$. Since the normalized coefficients are smooth functions of
the jets, Taylor expansion around $\mathcal J(F)$ and integration in $t$
give the assertion.
\end{proof}

\begin{lemma}\label{lem:nr-normal-form}
For the difference of two solutions over $\calC_\sigma$, there are
a smooth metric $\widehat g$, a proper radial function $\mathcal R$, and
constants $\ell,\varpi_*$ such that $v=\mathcal R^{\ell/2}w$ satisfies
\begin{equation}\label{eq:nr-bernstein-inequality}
 \left|
 (\Delta_{\widehat g}+\varpi_*)v
 -\frac{\mathcal R}{2}
 \ip{\nabla_{\widehat g}\mathcal R}{\nabla_{\widehat g}v}
 \right|
 \leq
 C \mathcal R^{-2}\bigl(|v|+|\nabla_{\widehat g}v|\bigr).
\end{equation}
Moreover,
\begin{equation}\label{eq:nr-weak-cone}
 |\nabla_{\widehat g}\mathcal R|=1,\qquad
 |\nabla_{\widehat g}^2\mathcal R^2-2\widehat g|
 \leq C\mathcal R^{-2},
\end{equation}
and $v$ and all its derivatives decay faster than every power of $\mathcal R$.
\end{lemma}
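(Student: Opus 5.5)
The plan is to reduce the difference equation \eqref{eq:nr-difference} to Bernstein's normal form in three moves: a power-law change of the radial variable that turns the large drift $\gamma\rho\,\partial_\rho$ into $\frac{\mathcal R}{2}\partial_{\mathcal R}$, a metric $\widehat g$ read off from the principal part of $\mathcal P$, and a conjugation by a power of $\mathcal R$ that fixes the first-order radial coefficient.

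\textbf{Step 1: model operator.} Rewrite \eqref{eq:nr-difference} as $\mathcal Pw-\gamma\rho w_\rho+\gamma w=0$ and divide by $a_R\rho^{1-r}$. For the model operator \eqref{eq:nr-cone-operator} this gives
\[
w_{\rho\rho}+(n-1)\tfrac{a_T}{a_R}\rho^{-1}w_\rho+\tfrac{a_T}{a_R}\rho^{-2}\Delta_hw-\tfrac{\gamma}{a_R}\rho^{r}w_\rho+\tfrac{\gamma}{a_R}\rho^{r-1}w+O(\rho^{-2})w .
\]
Set $\mathcal R=c\rho^\beta$ with $\beta=(r+1)/2$, and divide by $c^2\beta^2\rho^{r-1}$. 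The choice $c^2=2\gamma/(a_R\beta)$ makes the drift exactly $-\frac{\mathcal R}{2}\partial_{\mathcal R}$, and the zeroth-order term becomes the constant $1/(2\beta)$. The angular term becomes $\frac{a_T}{a_R\beta^2}\mathcal R^{-2}\Delta_h$. The remaining radial first-order term is $c_0\mathcal R^{-1}\partial_{\mathcal R}$ with
\[
c_0=\frac{(\beta-1)+(n-1)a_T/a_R}{\beta}.
\]
The curvature term $(n-1)a_Tk^2\rho^{-r-1}$ becomes $O(\rho^{-r-1})=O(\mathcal R^{-2})$, which is admissible. Thus the model metric is the exact cone $\widehat g_0=d\mathcal R^2+\mathcal R^2\frac{a_R\beta^2}{a_T}h$, whose Laplacian carries the term $(n-1)\mathcal R^{-1}\partial_{\mathcal R}$.

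\textbf{Step 2: conjugation.} Substitute $w=\mathcal R^{-\ell/2}v$ with $\ell=c_0-(n-1)$. This removes the mismatch $(c_0-(n-1))\mathcal R^{-1}\partial_{\mathcal R}$. The drift acting on $\mathcal R^{-\ell/2}$ shifts the constant zeroth-order coefficient by $\ell/4$, so $\varpi_*=1/(2\beta)+\ell/4$. All other terms produced by the conjugation are $O(\mathcal R^{-2})v$.

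\textbf{Step 3: perturbation to the true operator.} By Lemma~\ref{lem:nr-coefficients}, $D\mathcal E(F)$ has no mixed or angular first-order terms, and its coefficients differ from \eqref{eq:nr-cone-operator} by relative $O(\rho^{-r-1})=O(\mathcal R^{-2})$. The coefficients of $\mathcal P$ differ from those of $D\mathcal E(F)$ by a relative $O_\infty(\rho^{-2r-2})=O_\infty(\mathcal R^{-4})$. After normalization, first- and zeroth-order errors are $O(\mathcal R^{-2})(|v|+|\nabla v|)$, with radial first-order errors at worst $O(\mathcal R^{-1})\partial_{\mathcal R}$, which is absorbed into the $\mathcal R^{-2}$ bound once measured in the weights induced by $\widehat g$.

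Second-order errors cannot be left on the right side of \eqref{eq:nr-bernstein-inequality}, which allows only $|v|+|\nabla v|$. I will absorb them into the metric. Define $\widehat g$ by prescribing its inverse in the $(\mathcal R,\theta)$ coordinates: $\widehat g^{\mathcal R\mathcal R}=1$, the mixed and angular entries equal to the normalized mixed and angular principal coefficients of the full operator. The radial second-order error, of relative size $O(\mathcal R^{-4})$, is instead eliminated with the equation itself. Since $v_{\mathcal R\mathcal R}=\frac{\mathcal R}{2}v_{\mathcal R}+{}$lower-order terms, an error $\varepsilon v_{\mathcal R\mathcal R}$ with $\varepsilon=O(\mathcal R^{-4})$ costs $O(\mathcal R^{-3})|\nabla v|$ plus angular terms. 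For those angular terms I will instead divide the whole operator by $1+\varepsilon$, which is cleaner.

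\textbf{Step 4: cone conditions and decay.} Because $\widehat g^{\mathcal R\mathcal R}=1$, we get $|\nabla_{\widehat g}\mathcal R|=1$ exactly. The difference between $\Delta_{\widehat g}$ and the normalized operator lies in the Christoffel terms, which come from derivatives of $O_\infty(\mathcal R^{-4})$ coefficients and are harmless. The Hessian bound in \eqref{eq:nr-weak-cone} follows because $\widehat g-\widehat g_0=O_\infty(\mathcal R^{-4})$ in the conormal sense, with relative size $O(\mathcal R^{-2})$ or better, and $\nabla^2_{\widehat g_0}\mathcal R^2=2\widehat g_0$ holds exactly. Rapid decay of $v$ and its derivatives follows from \eqref{eq:nr-flat}, since conjugation by a power of $\mathcal R\sim\rho^\beta$ preserves $O_\infty(\rho^{-M})$ for all $M$.

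\textbf{Main obstacle.} I expect the hardest point to be the bookkeeping that turns second-order coefficient errors into either metric corrections or first-order terms while keeping $|\nabla\mathcal R|=1$ exact. I will first try to read $\widehat g$ directly off the full principal symbol with the $\mathcal R\mathcal R$ entry normalized by division. I will then verify that every resulting Christoffel and radial-drift discrepancy carries at least a factor $\mathcal R^{-2}$ relative to $|v|+|\nabla v|$; the comparison $\rho^{-2r-2}=O(\mathcal R^{-4})$ is what leaves room for the extra factor of $\mathcal R$ coming from the drift.
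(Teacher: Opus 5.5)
Your architecture is the paper's: the substitution $s=\rho^\beta$ with $\beta=(r+1)/2$, the rescaling $\mathcal R=\sqrt{2\gamma/(a_R\beta)}\,s$, a metric read off the principal symbol with $\widehat g^{\mathcal R\mathcal R}=1$, and conjugation by $\mathcal R^{\ell/2}$. Your values of $c^2$, of $1/(2\beta)=1/(r+1)$, and of $c_0=(\beta-1+n-r)/\beta$ also agree with the paper's.

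There is, however, a genuine gap in Step~3, and it makes your $\ell$ and $\varpi_*$ wrong for $r\ge2$. You treat the radial second-order error as relative $O(\mathcal R^{-4})$, but that bound applies only to $\mathcal P$ versus $D\mathcal E(F)$. Relative to the cone, the radial principal coefficient of $D\mathcal E(F)$ is
\[
a_R\rho^{1-r}\bigl(1+\zeta\rho^{-r-1}+O_\infty(\rho^{-2r-2})\bigr),\qquad \zeta=(r-1)\Theta\Bigl(\frac rk-k\Bigr).
\]
This comes from raising $\lambda=\rho^{-1}\bigl(k+\Theta(r-k^2)\rho^{-r-1}+\cdots\bigr)$ to the power $r-1$. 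In your normalization it is a relative correction $2\eta\zeta\,\mathcal R^{-2}$ to the coefficient of $\partial_{\mathcal R}^2$, where $\eta=\gamma/(a_R\beta)$. To get $|\nabla_{\widehat g}\mathcal R|=1$ you must divide by this coefficient, or equivalently trade it via the equation. The drift $-\frac{\mathcal R}{2}\partial_{\mathcal R}$, of size $\mathcal R$, then produces the term $\eta\zeta\,\mathcal R^{-1}\partial_{\mathcal R}v$.

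You claim such an $O(\mathcal R^{-1})\partial_{\mathcal R}$ term is absorbed into the $\mathcal R^{-2}$ bound using the weights of $\widehat g$; this is false. Since $\widehat g^{\mathcal R\mathcal R}=1$, one has $|\partial_{\mathcal R}v|\le|\nabla_{\widehat g}v|$ with no gain. The term is therefore only $O(\mathcal R^{-1})|\nabla_{\widehat g}v|$, which \eqref{eq:nr-bernstein-inequality} does not allow. It is exactly the type of term, the $(c_0-(n-1))\mathcal R^{-1}\partial_{\mathcal R}$ mismatch, that you correctly conjugate away in Step~2. Your remark that $O(\mathcal R^{-4})$ leaves room for the factor $\mathcal R$ from the drift does not cover this $O(\mathcal R^{-2})$ discrepancy between $F$ and the cone.

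The repair stays within your framework. The $\eta\zeta/\mathcal R$ term is part of the principal $\mathcal R^{-1}$-drift and must go into the conjugation exponent:
\[
\ell=\eta\zeta+\frac{\beta-1+n-r}{\beta}-(n-1),\qquad \varpi_*=\frac1{r+1}+\frac\ell4,
\]
as in the paper. Your constants are correct only when $\zeta=0$, that is, for $r=1$ or $\sigma^2=1/r$.

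A related inaccuracy: $\widehat g-\widehat g_0$ is not $O_\infty(\mathcal R^{-4})$. The same $O(\mathcal R^{-2})$ difference between $F$ and the cone puts an $s$-independent tensor $h_1$ into the angular block, a relative $O(\mathcal R^{-2})$ change. This one is harmless: it still gives $\nabla^2_{\widehat g}\mathcal R^2-2\widehat g=O(\mathcal R^{-2})$ and $\Delta_{\widehat g}\mathcal R=(n-1)/\mathcal R+O(\mathcal R^{-3})$. It does show that the source you underestimated enters the first-order terms as well.
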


\begin{proof}
By Lemma~\ref{lem:nr-decay}, the two graph functions have the common
leading term
\[
 f_i=F+O_\infty(\rho^{-2r-1}),
 \qquad
 F=-\Theta\rho^{-r},
 \qquad
 \Theta=\frac{\binom{n-1}{r}k^r}{\gamma(r+1)}.
\]
Therefore, for
\[
 f_t=f_2+tw,\qquad 0\le t\le1,
\]
we have, uniformly in \(t\),
\begin{equation}\label{eq:nr-ft-F}
 \frac{f_t-F}{\rho}
 =
 O_\infty(\rho^{-2r-2}).
\end{equation}

For the radial graph determined by \(F\), the meridian and angular
principal curvatures are
\[
 \mu=
 \frac{F''}{(1+F'^2)^{3/2}},
 \qquad
 \lambda=
 \frac{k+F'}
 {\rho(1-kF/\rho)\sqrt{1+F'^2}}.
\]
Writing \(W_F=\sqrt{1+F'^2}\), we obtain
\[
 \mathcal E(F)
 =
 \binom{n-1}{r-1}
 \lambda^{r-1}\frac{F''}{W_F^2}
 +
 \binom{n-1}{r}W_F\lambda^r.
\]
Recall that
\[
 a_R=\binom{n-1}{r-1}k^{r-1},
 \qquad
 a_T=\binom{n-2}{r-1}k^{r-1},
 \qquad
 \frac{a_T}{a_R}=\frac{n-r}{n-1}.
\]
Since
\[
 F'=r\Theta\rho^{-r-1},
 \qquad
 \frac{kF}{\rho}=-k\Theta\rho^{-r-1},
\]
expansion of the preceding formulas gives the coefficient of
\(w_{\rho\rho}\) in \(D\mathcal E(F)\) as
\begin{equation}\label{eq:nr-radial-principal}
 a_R\rho^{1-r}
 \left(
  1+\zeta\rho^{-r-1}
  +O_\infty(\rho^{-2r-2})
 \right),
 \qquad
 \zeta=(r-1)\Theta(r/k-k).
\end{equation}
The coefficient of \(w_\rho\) has leading term
\begin{equation}\label{eq:nr-radial-first}
 (n-r)a_R\rho^{-r}.
\end{equation}
Moreover, the angular principal part is
\[
 a_T\rho^{-r-1}\Delta_h
\]
to leading order.

By Lemma~\ref{lem:nr-coefficients}, replacing \(D\mathcal E(F)\) by
\[
 \mathcal P=\int_0^1D\mathcal E(f_t)\,dt
\]
changes each of its coefficients by a relative
\(O_\infty(\rho^{-2r-2})=O_\infty(s^{-4})\), measured in the natural
scale of that coefficient.

Set
\[
 \beta=\frac{r+1}{2},
 \qquad
 s=\rho^\beta,
 \qquad
 A_0=a_R\beta^2,
 \qquad
 \eta=\frac{\gamma\beta}{A_0}.
\]
Since
\[
 \rho\partial_\rho=\beta s\partial_s,
 \qquad
 \rho^2\partial_{\rho\rho}
 =
 \beta^2s^2\partial_{ss}
 +\beta(\beta-1)s\partial_s,
\]
the equation
\[
 \mathcal Pw-\gamma\rho w_\rho+\gamma w=0
\]
takes, in the variables \((s,\theta)\), the form
\begin{equation}\label{eq:nr-s-operator}
 A\,w_{ss}
 +2A^a w_{sa}
 +A^{ab}w_{ab}
 +B\,w_s+B^aw_a+Cw=0.
\end{equation}
Here $w_s=\partial_sw$, $w_{ss}=\partial_s^2w$,
$w_a=\partial_aw$, $w_{sa}=\partial_s\partial_aw$, and
$w_{ab}=\nabla_a^h\nabla_b^hw$.
Thus the angular second derivatives are covariant with respect to $h$.
In ordinary coordinates the angular first-order coefficient is
\[
 B^a_{\rm coord}=B^a-A^{bc}\Gamma^a_{bc}(h).
\]
The connection term in this identity is retained when comparing the
operator with a Laplace--Beltrami operator.
The coefficients
satisfy, with all derivatives measured on the conical scale,
\begin{align}
 A(s,\theta)
 &=
 A_0\left(
  1+\zeta s^{-2}+O_\infty(s^{-4})
 \right),                                         \label{eq:nr-A}\\
 B(s,\theta)
 &=
 -\gamma\beta s
 +a_R\beta(\beta-1+n-r)s^{-1}
 +O_\infty(s^{-3}),                               \label{eq:nr-B}\\
 C(s,\theta)
 &=
 \gamma+O_\infty(s^{-2}).                         \label{eq:nr-C}
\end{align}
Furthermore,
\begin{align}
 A^{ab}
 &=
 a_Ts^{-2}
 \left(
  h^{ab}+s^{-2}Y^{ab}+O_\infty(s^{-4})
 \right),                                         \label{eq:nr-angular}\\
 A^a&=O_\infty(s^{-5})A_0,                        \label{eq:nr-mixed}
\end{align}
where $Y^{ab}$ is a constant multiple of $h^{ab}$ and is
independent of $s$. These expansions, as well as
\eqref{eq:nr-A}--\eqref{eq:nr-C}, follow from the radial computations
above and Lemma~\ref{lem:nr-coefficients}. For instance, by
Lemma~\ref{lem:nr-coefficients}(i) the mixed term of \(\mathcal P\) is
\(2\mathsf a^{\rho a}\nabla^2_{\rho a}w\) with
\(\mathsf a^{\rho a}=\rho^{1-r}O_\infty(\rho^{-2r-2})\); since
\(\nabla^2_{\rho a}=\rho^{-1}\partial_\rho\epsilon_a\) up to lower-order
terms and \(\rho\partial_\rho=\beta s\partial_s\), its coefficient in
\eqref{eq:nr-s-operator} is
\(\rho^{-r-1}\beta s\,O_\infty(\rho^{-2r-2})=O_\infty(s^{-5})\), which is
\eqref{eq:nr-mixed}. At the radial graph $F$ the angular first-order
coefficient vanishes after the angular Hessian has been written
covariantly (Lemma~\ref{lem:nr-coefficients}(i)); this does not mean
that the angular coordinate connection terms vanish. Its perturbation is
\(\mathsf b^a=\rho^{-r}O_\infty(\rho^{-2r-2})\), and the same conversion
gives the coordinate coefficient \(B^a=O_\infty(s^{-6})\). In
particular,
\begin{equation}\label{eq:nr-angular-first}
 |B^a|_{s^2h}=O(s^{-3}).
\end{equation}

For \(s\) sufficiently large, \(A>0\). Divide
\eqref{eq:nr-s-operator} by \(A\) and define a contravariant metric by
\[
 g^{ss}=1,\qquad
 g^{sa}=\frac{A^a}{A},\qquad
 g^{ab}=\frac{A^{ab}}{A}.
\]
Strict ellipticity of the Newton tensor implies that this matrix is
positive definite. Thus it is the inverse of a smooth Riemannian
metric \(g\). The principal symbol of the normalized operator is $g^{ij}$.
In ordinary coordinates its second-order part is $g^{ij}\partial_{ij}$,
and its angular first-order part includes
$-(A^{bc}/A)\Gamma^a_{bc}(h)\partial_a$ in addition to $B^a/A$.
At the radial graph $F$, the angular metric is a scalar multiple of
$h$ at each fixed $s$, so these connection terms agree exactly with
the angular first-order terms of the corresponding $\Delta_g$.
The nonradial perturbation has relative size $O_\infty(s^{-4})$;
its differentiated contribution to this comparison is of conical norm
$O(s^{-5})$, which is smaller than the error retained below.

Define the limiting conical metric
\[
 g_*
 =
 ds^2+h_{*\,ab}s^2d\theta^a d\theta^b,
 \qquad
 h_*=\frac{A_0}{a_T}h.
\]
Equations \eqref{eq:nr-A}--\eqref{eq:nr-mixed}, followed by inversion
of the coefficient matrix, give
\begin{align}
 g^{ss}&=1,                                                     \label{eq:nr-gss}\\
 g^{sa}&=O_\infty(s^{-5}),                                     \label{eq:nr-gsa}\\
 g^{ab}
 &=
 s^{-2}
 \left(
  h_*^{ab}+s^{-2}\widetilde h^{ab}+O_\infty(s^{-4})
 \right),                                                      \label{eq:nr-gab}
\end{align}
where \(\widetilde h^{ab}\) is independent of \(s\). Equivalently,
\[
 g
 =
 ds^2+s^2h_*+h_1+O_\infty(s^{-2}),
\]
up to the mixed terms in \eqref{eq:nr-gsa}, where \(h_1\) is a
symmetric tensor on the link independent of \(s\). In particular,
\[
 |g-g_*|_{g_*}=O(s^{-2})
\]
with the corresponding conormal estimates in all orders.

Since \(g^{ss}=1\), we have the exact identity
\[
 |\nabla_gs|_g=1.
\]
The metric expansion also gives
\[
 \partial_s\log\sqrt{\det g}
 =
 \frac{n-1}{s}+O(s^{-3}),
\]
and hence
\begin{equation}\label{eq:nr-lap-s}
 \Delta_gs
 =
 \frac{1}{\sqrt{\det g}}
 \partial_i\bigl(\sqrt{\det g}\,g^{is}\bigr)
 =
 \frac{n-1}{s}+O(s^{-3}).
\end{equation}
For completeness, the Hessian identity
\[
 \nabla_g^2s^2=2\,ds\otimes ds+s\mathcal L_{\nabla_gs}g
\]
also verifies the required tensor estimate directly.
For the radial part $ds^2+s^2h_*+h_1$, subtraction of $2g$ leaves
$-2h_1$ in the angular block, of conical norm $O(s^{-2})$.
The differentiated remainder and the mixed terms have the same or
smaller order. Hence
\[
 \nabla_g^2s^2
 =
 2\,ds\otimes ds+2s\nabla_g^2s
 =
 2g+O_{g_*}(s^{-2}).
\]
Thus
\begin{equation}\label{eq:nr-conical-s}
 |\nabla_gs|_g=1,
 \qquad
 |\nabla_g^2s^2-2g|_g=O(s^{-2}).
\end{equation}
Moreover, \eqref{eq:nr-gsa} implies
\begin{equation}\label{eq:nr-coordinate-gradient}
 |\nabla_gs-\partial_s|_g=O(s^{-4}).
\end{equation}

We next compare the normalized operator with \(\Delta_g\). In local
coordinates,
\[
 \Delta_gw
 =
 g^{ij}\partial_{ij}w
 +
 \left(
  \frac{1}{\sqrt{\det g}}
  \partial_i(\sqrt{\det g}\,g^{ij})
 \right)\partial_jw.
\]
Therefore the second-order terms cancel exactly when the normalized
version of \eqref{eq:nr-s-operator} is written in terms of
\(\Delta_g\). By \eqref{eq:nr-lap-s}, the radial first-order term
contributed by \(\Delta_g\) is
\[
 \frac{n-1}{s}\partial_sw+O(s^{-3})|\nabla_gw|.
\]
On the other hand, dividing \eqref{eq:nr-B} by
\eqref{eq:nr-A} gives
\begin{align*}
 \frac BA
 &=
 \frac{
 -\gamma\beta s+
 a_R\beta(\beta-1+n-r)s^{-1}
 +O_\infty(s^{-3})
 }{
 A_0(1+\zeta s^{-2}+O_\infty(s^{-4}))
 }\\
 &=
 -\eta s
 +
 \left(
  \eta\zeta+
  \frac{\beta-1+n-r}{\beta}
 \right)s^{-1}
 +O_\infty(s^{-3}).
\end{align*}
Subtracting the radial first-order term of $\Delta_g$, retaining the
angular connection cancellation described above, and using
\eqref{eq:nr-angular-first}, yields
\begin{equation}\label{eq:nr-weighted-drift}
 \Delta_gw-\eta s\partial_sw+\frac{\ell}{s}\partial_sw
 +\frac{\gamma}{A_0}w
 =
 E_1\cdot\nabla_gw+E_0w,
\end{equation}
where
\[
 |E_1|_g=O(s^{-3}),
 \qquad
 |E_0|=O(s^{-2}),
\]
and
\begin{align}
 \ell
 &=
 \eta\zeta
 +\frac{\beta-1+n-r}{\beta}
 -(n-1)                                           \label{eq:nr-m}\\
 &=
 \eta\zeta-\frac{n(r-1)}{r+1}.                   \notag
\end{align}
The term \(\eta\zeta/s\) comes from the product of the leading drift
\(-\gamma\beta s\) with the correction
\(-\zeta s^{-2}\) in \(A^{-1}\); it is therefore part of the principal
\(s^{-1}\)-drift and cannot be placed in the error.

Now put
\[
 \mathcal R=\sqrt{2\eta}\,s,
 \qquad
 \widehat g=2\eta g.
\]
Since constant rescaling gives
\[
 \Delta_{\widehat g}=\frac1{2\eta}\Delta_g,
 \qquad
 \nabla_{\widehat g}\mathcal R\cdot\nabla
 =
 \frac{1}{\sqrt{2\eta}}\nabla_gs\cdot\nabla,
\]
division of \eqref{eq:nr-weighted-drift} by \(2\eta\), together with
\eqref{eq:nr-coordinate-gradient}, gives
\begin{equation}\label{eq:nr-w-before-conjugation}
 \left(
  \Delta_{\widehat g}
  -\frac{\mathcal R}{2}\langle\nabla_{\widehat g}\mathcal R,\nabla_{\widehat g}\,\cdot\,\rangle
  +\frac{\ell}{\mathcal R}
   \langle\nabla_{\widehat g}\mathcal R,\nabla_{\widehat g}\,\cdot\,\rangle
  +\varpi_0
 \right)w
 =
 \widetilde E_1\cdot\nabla_{\widehat g}w
 +\widetilde E_0w,
\end{equation}
where
\[
 \varpi_0=\frac{\gamma}{2\eta A_0}
 =\frac1{r+1},
 \qquad
 |\widetilde E_1|_{\widehat g}=O(\mathcal R^{-3}),
 \qquad
 |\widetilde E_0|=O(\mathcal R^{-2}).
\]

Finally, set
\[
 w=\mathcal R^{-\ell/2}v.
\]
Because \(|\nabla_{\widehat g}\mathcal R|=1\), the first-order term produced by
the Laplacian is
\[
 -\frac{\ell}{\mathcal R}
 \langle\nabla_{\widehat g}\mathcal R,\nabla_{\widehat g}v\rangle,
\]
which cancels the \(\ell/\mathcal R\)-drift in
\eqref{eq:nr-w-before-conjugation}. The leading zeroth-order
contribution from the Gaussian drift is
\[
 -\frac{\mathcal R}{2}\,\partial_{\mathcal R}(\mathcal R^{-\ell/2})\,\mathcal R^{\ell/2}
 =
 \frac \ell4,
\]
whereas the remaining terms produced by the conjugation are
\(O(\mathcal R^{-2})v\). Therefore
\[
 \left|
 \left(
  \Delta_{\widehat g}
  -\frac{\mathcal R}{2}
   \langle\nabla_{\widehat g}\mathcal R,\nabla_{\widehat g}\,\cdot\,\rangle
  +\varpi_*
 \right)v
 \right|
 \le
 C\mathcal R^{-2}\bigl(|v|+|\nabla_{\widehat g}v|\bigr),
\]
with
\[
 \varpi_*=\frac1{r+1}+\frac \ell4.
\]
This is \eqref{eq:nr-bernstein-inequality}.

Finally, \eqref{eq:nr-conical-s} is invariant under the constant
rescaling above and gives
\[
 |\nabla_{\widehat g}\mathcal R|_{\widehat g}=1,
 \qquad
 |\nabla_{\widehat g}^2\mathcal R^2-2\widehat g|_{\widehat g}
 \le C\mathcal R^{-2},
\]
which is \eqref{eq:nr-weak-cone}. Since multiplication by the fixed
power \(\mathcal R^{\ell/2}\) preserves superalgebraic decay,
\eqref{eq:nr-flat} implies that \(v\), together with all its
derivatives, decays faster than every power of \(\mathcal R\).
\end{proof}

\begin{proof}[Proof of Theorem~\ref{thm:nr-circular}]
Let \(f_1,f_2\) be the normal graph functions of the two ends over
\(\calC_\sigma\), and let \(w=f_1-f_2\). By
Lemma~\ref{lem:nr-normal-form}, there are a smooth metric \(\widehat g\)
on a distant part \(E\) of the cone, a proper function \(\mathcal R\), and
constants \(\ell,\varpi_*\) such that \(v=\mathcal R^{\ell/2}w\) satisfies
\begin{equation}\label{eq:nr-Bernstein-application}
 \bigl|(\mathcal L_0+\varpi_*)v\bigr|
 \le
 C\mathcal R^{-2}\bigl(|v|+|\nabla_{\widehat g}v|\bigr),
 \qquad
 \mathcal L_0=\Delta_{\widehat g}-\frac{\mathcal R}{2}\partial_{\mathcal R},
\end{equation}
where \(\partial_{\mathcal R}=\nabla_{\widehat g}\mathcal R\), and
\[
 |\nabla_{\widehat g}\mathcal R|_{\widehat g}=1,
 \qquad
 \bigl|\nabla_{\widehat g}^2\mathcal R^2-2\widehat g\bigr|_{\widehat g}
 \le C\mathcal R^{-2}.
\]
After increasing the inner radius, so that \(C\mathcal R^{-2}\le1/2\),
these estimates show that \((E,\widehat g,\mathcal R)\) is a weakly
conical end in the sense of \eqref{eq:weakly-conical}. Since
\(\widehat g\) is uniformly equivalent to the conical metric
\(2\eta(ds^2+s^2h_*)\) and the link is compact, its level sets satisfy
\begin{equation}\label{eq:nr-level-volume}
 \operatorname{Vol}_{\widehat g}\{\mathcal R=t\}
 \le Ct^{n-1}
\end{equation}
for all sufficiently large \(t\), and the volume of
\(\{t\le\mathcal R\le t+1\}\) is \(O(t^{n-1})\).

By Lemma~\ref{lem:nr-normal-form}, \(v\) and all its derivatives decay
faster than every power of \(\mathcal R\): for every \(N>0\),
\(|v|+|\nabla_{\widehat g}v|\le C_N\mathcal R^{-N}\) on \(E\). Hence
\[
 \int_E
 \bigl(v^2+|\nabla_{\widehat g}v|^2\bigr)
 \mathcal R^{2-4\varpi_*}e^{-\mathcal R^2/4}\,d\mu_{\widehat g}
 <\infty,
\]
and \(v\) satisfies the hypotheses of Theorem~\ref{thm:bernstein} with
\(\varpi=\varpi_*\). Moreover, by \eqref{eq:nr-level-volume}, for every
\(N>0\),
\[
 t^{1-n-4\varpi_*}
 \int_{\{\mathcal R=t\}}v^2\,d\sigma_{\widehat g}
 \le
 C_N^2\,t^{1-n-4\varpi_*}\,t^{n-1}\,t^{-2N}
 =
 C_N^2\,t^{-4\varpi_*-2N},
\]
which tends to zero as \(t\to\infty\) once \(N>-2\varpi_*\). Thus the
number \(\alpha\) in Theorem~\ref{thm:bernstein} vanishes, and the
last estimate of that theorem gives \(v\equiv0\) on
\(\{\mathcal R\ge R_0\}\). Since \(\mathcal R^{\ell/2}>0\),
also \(w=\mathcal R^{-\ell/2}v=0\) there. Thus \(f_1=f_2\) on a
sufficiently distant part of the cone, and the two ends coincide outside a
compact set. The proof uses neither the rotational
construction nor Theorems~\ref{thm:main} and~\ref{thm:expansion-intro};
these enter only in Corollary~\ref{cor:classification}.
\end{proof}

\subsection{Cones with constant principal curvatures on the link}

\begin{proposition}\label{prop:nr-constant-curvatures}
Let $\Gamma\subset\Sph^n$ be a smooth closed embedded oriented
hypersurface with constant principal curvatures
$k_1,\ldots,k_{n-1}$ and constant multiplicities. Assume
\[
 (0,k_1,\ldots,k_{n-1})\in\Ga_r.
\]
Two solutions of \eqref{eq:nr-shrinker} that are smoothly asymptotic,
with multiplicity one and compatible orientation, to the cone
over $\Gamma$ coincide outside a compact set.
\end{proposition}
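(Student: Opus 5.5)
The plan is to repeat the proof of Theorem~\ref{thm:nr-circular} essentially verbatim, checking at each step that only the constancy of the principal curvatures of $\Gamma$ and the strict admissibility $(0,k_1,\ldots,k_{n-1})\in\Ga_r$ are used, not $O(n)$-invariance. First, Lemma~\ref{lem:nr-decay} holds for any regular cone. It gives the common leading term $F=-\Theta\rho^{-r}$ with $\Theta=S_r(K)/(\gamma(r+1))$, which is now a \emph{constant} because $S_r(K)$ is a symmetric function of the constant $k_i$. It also gives $w=f_1-f_2=O_\infty(\rho^{-M})$ for every $M$. The hypothesis $(0,k)\in\Ga_r$ makes the Newton tensor $T_{r-1}(A_C)$ positive definite. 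Hence $D\mathcal E(0)$, and $\mathcal P$ far out, is elliptic on the natural scale, with radial coefficient $a_R\rho^{1-r}$, $a_R=S_{r-1}(K)>0$, and angular block $\rho^{-r-1}T_{r-1}(K)$ acting on $\nabla_h^2$.

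Second, I will rebuild Lemma~\ref{lem:nr-normal-form}, replacing circular symmetry by Lemma~\ref{lem:nr-coefficients}. Part~(i) of that lemma already covers constant-curvature links. At $F$ the mixed and angular first-order coefficients vanish, and the radial and zeroth-order coefficients depend only on $\rho$. Therefore $A$, $B$, $C$ in \eqref{eq:nr-s-operator} have the expansions \eqref{eq:nr-A}--\eqref{eq:nr-C} with constants $A_0=a_R\beta^2$ and $\zeta$, the latter computed from the radial graph of $F$. The difference from $\mathcal P$ is controlled by Lemma~\ref{lem:nr-coefficients}(ii), with relative size $O_\infty(s^{-4})$. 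The change is that $A^{ab}$ is now $s^{-2}(\tau^{ab}+s^{-2}Y^{ab}+\dots)$. Here $\tau$ is the link tensor $T_{r-1}(K)$ raised by $h$, which is not a multiple of $h$, and $Y$ is a spectral function of $K$. I will therefore take $g_*=ds^2+s^2h_*$ with $h_*:=A_0\tau^{-1}$, a smooth metric on $\Gamma$. The cone $(ds^2+s^2h_*)$ is still a genuine cone, so \eqref{eq:nr-conical-s}, $|\nabla_g s|=1$ and \eqref{eq:nr-coordinate-gradient} follow exactly as before from $g^{ss}=1$ and the $s$-independence of $h_*$ and $h_1$.

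The main obstacle is the comparison of the normalized operator with $\Delta_g$ in the angular first-order terms. In the circular case this used that the angular metric is a multiple of $h$. Now I must show that $\tau^{bc}\Gamma^a_{bc}(h)$, together with the coordinate form of the covariant angular Hessian, reproduces $\Delta_{h_*}$'s first-order term up to $O(s^{-3})$. Equivalently, I must show $\operatorname{div}_h(T_{r-1}(K))=0$ and that the $\sqrt{\det h_*}$ correction vanishes. For the divergence I will use Lemma~\ref{lem:spectral-cancellation}: in a principal frame, $\operatorname{div}_hT_{r-1}(K)$ is a sum of terms $\sum_ip(k_i)\ip{(\nabla_{e_i}K)e_i}{Y}$ and $\operatorname{tr}(p(K)\nabla_YK)$, both of which vanish. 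The same lemma shows $\det(h_*^{-1}h)$ is constant, so $\partial\log\sqrt{\det h_*}=\partial\log\sqrt{\det h}$. The $s^{-2}Y$ correction is handled identically, since $Y$ is spectral in $K$. Radially, $\partial_s\log\sqrt{\det g}=(n-1)/s+O(s^{-3})$ persists because the $s^{-2}$ correction is $s$-homogeneous. One then obtains \eqref{eq:nr-weighted-drift} with the same constants $\eta$, $\varpi_0=1/(r+1)$ and an explicit $\ell$.

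Finally, the rescaling $\mathcal R=\sqrt{2\eta}\,s$ and the conjugation $v=\mathcal R^{\ell/2}w$ produce \eqref{eq:nr-bernstein-inequality} on a weakly conical end $(E,\widehat g,\mathcal R)$. Superpolynomial decay of $v$ gives the finiteness hypothesis of Theorem~\ref{thm:bernstein} and $\alpha=0$, exactly as in the proof of Theorem~\ref{thm:nr-circular}. Hence $v\equiv0$ and so $f_1=f_2$ far out.
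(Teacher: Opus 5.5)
Your proposal is correct and follows the paper's proof essentially step for step. You use the constant $\Theta$ to get a radial leading term $F$ and Lemma~\ref{lem:nr-coefficients} for the radial structure, take the effective link metric $h_*=A_0\,T_{r-1}(K)^{-1}$, settle the angular first-order comparison with $\Delta_g$ by divergence-freeness and constant $h$-determinant of the angular principal tensor, and then finish with the conjugation and Bernstein's theorem as in the circular case.

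The differences are minor. The paper treats the full radius-dependent spectral tensor $\mathsf Q_\rho$ at once, writing it as a constant-coefficient combination of the divergence-free Newton tensors $T_j(K)$, and it records $\zeta$ and $\operatorname{tr}_hT_{r-1}(K)=(n-r)S_{r-1}(K)$ explicitly. You instead derive divergence-freeness directly from Lemma~\ref{lem:spectral-cancellation} and handle only the leading term and first correction, which suffices because the remaining terms fall within the permitted error.
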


\begin{proof}
Lemma~\ref{lem:nr-decay} applies without change. Since the principal
curvatures \(k_1,\ldots,k_{n-1}\) of the link are constant,
\[
 \Theta=\frac{S_r(K)}{\gamma(r+1)}
\]
is constant on \(\Gamma\). Hence the common leading term of the two
normal graph functions is the radial function
\[
 F(\rho)=-\Theta\rho^{-r}.
\]
Set
\[
 a_R=S_{r-1}(K),
 \qquad
 T=T_{r-1}(K).
\]
Because
\(
 (0,k_1,\ldots,k_{n-1})\in\Gamma_r,
\)
the Newton tensor associated with the cone is positive definite.
In particular,
\[
 a_R>0,\qquad T>0.
\]
Newton's identities also give
\begin{equation}\label{eq:nr-general-trace}
 \operatorname{tr}_hT=(n-r)S_{r-1}(K)=(n-r)a_R.
\end{equation}

For the radial graph determined by \(F\), the radial principal
curvature and the angular principal curvatures are
\[
 \mu=
 \frac{F''}{(1+F'^2)^{3/2}},
 \qquad
 \lambda_i=
 \frac{k_i+F'}
 {\rho(1-k_iF/\rho)\sqrt{1+F'^2}}.
\]
The coefficient of the radial second derivative in
\(D\mathcal E(F)\) is
\[
 \frac{S_{r-1}(\lambda_1,\ldots,\lambda_{n-1})}
 {1+F'^2}.
\]
Using
\[
 F'=r\Theta\rho^{-r-1},
 \qquad
 \frac{F}{\rho}=-\Theta\rho^{-r-1},
\]
we obtain
\[
 \lambda_i
 =
 \rho^{-1}
 \left(
  k_i+\Theta(r-k_i^2)\rho^{-r-1}
  +O(\rho^{-2r-2})
 \right).
\]
Therefore
\[
 S_{r-1}(\lambda)
 =
 \rho^{1-r}
 \left[
  a_R
  +
  \Theta\sum_i
  S_{r-2}(k|i)(r-k_i^2)\rho^{-r-1}
  +O(\rho^{-2r-2})
 \right],
\]
where \(k|i\) denotes the list obtained by omitting \(k_i\).
Since \(F'^2=O(\rho^{-2r-2})\), the radial coefficient becomes
\[
 a_R\rho^{1-r}
 \left(
  1+\zeta\rho^{-r-1}
  +O_\infty(\rho^{-2r-2})
 \right),
\]
where
\begin{equation}\label{eq:nr-general-zeta}
 \zeta
 =
 \frac{\Theta}{a_R}
 \sum_iS_{r-2}(k|i)(r-k_i^2)
 \qquad (r\ge2),
\end{equation}
and \(\zeta=0\) when \(r=1\). Together with
\eqref{eq:nr-general-trace}, this shows that all radial computations
in the proof of Lemma~\ref{lem:nr-normal-form} remain valid, with the
present values of \(a_R\) and \(\zeta\).

It remains to analyze the angular part of the linearization. In an
\(h\)-orthonormal frame diagonalizing \(K\), the angular principal
tensor of \(D\mathcal E(F)\), after removing the common radial factor
\(\rho^{-2}\), is diagonal with eigenvalues
\begin{equation}\label{eq:nr-angular-eigenvalues}
 q_i(\rho)
 =
 \frac{S_{r-1}(\mu,\lambda|i)}
 {(1-k_iF/\rho)^2}.
\end{equation}
Denote this symmetric tensor by \(\mathsf Q_\rho\).

For each fixed \(\rho\), the value \(q_i(\rho)\) depends on the
principal direction only through the corresponding eigenvalue
\(k_i\). Hence \(\mathsf Q_\rho\) is a spectral function of \(K\). If
\(\nu_1,\ldots,\nu_d\) are the distinct eigenvalues of \(K\),
polynomial interpolation gives a polynomial \(p_\rho\), of degree at
most \(d-1\), such that
\[
 \mathsf Q_\rho=p_\rho(K).
\]
The interpolation coefficients depend on \(\rho\) and on the constant
numbers \(\nu_1,\ldots,\nu_d\), but not on the point of
\(\Gamma\). By the Cayley--Hamilton and Newton identities, \(p_\rho(K)\)
can be written as
\[
 \mathsf Q_\rho=\sum_{j=0}^{n-2}c_j(\rho)T_j(K),
\]
where every coefficient \(c_j(\rho)\) is constant on the link.

Each Newton tensor \(T_j(K)\) is divergence-free with respect to \(h\).
Since the eigenvalues of \(K\), and hence all \(S_j(K)\), are constant,
we conclude that
\begin{equation}\label{eq:nr-Q-divfree}
 \operatorname{div}_h\mathsf Q_\rho=0.
\end{equation}
Moreover, the eigenvalues \(q_i(\rho)\) are constant on \(\Gamma\).
Thus
\begin{equation}\label{eq:nr-Q-determinant}
 \det_h\mathsf Q_\rho
 =
 \prod_{i=1}^{n-1}q_i(\rho)
\end{equation}
is constant on the link for every fixed \(\rho\). Moreover,
strict admissibility and convergence to the conical coefficients imply
that \(\mathsf Q_\rho\) is positive definite for all sufficiently
large \(\rho\).

Define the radius-dependent angular metric \(h_\rho\) by
\[
 h_\rho^{-1}=\mathsf Q_\rho,
 \qquad\text{that is,}\qquad
 h_\rho^{ab}=\mathsf Q_\rho^{ab}.
\]
By \eqref{eq:nr-Q-determinant}, the density ratio
\[
 \frac{d\mu_{h_\rho}}{d\mu_h}
 =
 \bigl({\rm det}_{h}\mathsf Q_\rho\bigr)^{-1/2}
\]
is constant on the link. Combining this fact with
\(\operatorname{div}_h\mathsf Q_\rho=0\), we obtain, for every smooth
function \(u\) on the link,
\begin{align}
 \Delta_{h_\rho}u
 &=
 \frac{1}{\sqrt{\det h_\rho}}
 \partial_a\left(
  \sqrt{\det h_\rho}\,
  h_\rho^{ab}\partial_bu
 \right) \notag\\
 &=
 \frac{1}{\sqrt{\det h}}
 \partial_a\left(
  \sqrt{\det h}\,
  \mathsf Q_\rho^{ab}\partial_bu
 \right) \notag\\
 &=
 \mathsf Q_\rho^{ab}\nabla_a^h\nabla_b^hu.
 \label{eq:nr-Q-laplacian}
\end{align}
Thus the angular principal part of the linearization at \(F\) is
exactly a Laplace--Beltrami operator and introduces no additional
angular first-order term.

After the radial change of variables and division by the radial
principal coefficient, we denote the limiting normalized angular tensor
by $\mathsf Q_\infty$; explicitly,
\[
 \mathsf Q_\infty^{ab}
 =
 \frac{1}{a_R\beta^2}T^{ab},
 \qquad
 \beta=\frac{r+1}{2}.
\]
We therefore define the limiting link metric \(h_*\) by
\begin{equation}\label{eq:nr-h-star-general}
 h_*^{ab}
 =
 \frac{1}{a_R\beta^2}T^{ab}.
\end{equation}
Since \(T>0\), this defines a smooth Riemannian metric on the link.
Moreover, the eigenvalues of \(T\) are constant, so
\(\det_hT\) is constant. As above,
\[
 \Delta_{h_*}u
 =
 \frac{1}{a_R\beta^2}
 T^{ab}\nabla_a^h\nabla_b^hu.
\]

We next verify that there are no further angular first-order terms in
the linearization at the radial graph \(F\). Since \(F\) depends only
on \(\rho\), every term containing an angular derivative of \(F\)
vanishes. The only remaining possible contributions involving
\(\nabla K\) are contractions of the form
\(\operatorname{tr}(P(K)\nabla_YK)\) and
\(\sum_iP(k_i)\ip{(\nabla_{e_i}K)e_i}{Y}_h\), with polynomial weights
representing the spectral functions of the constant eigenvalues; both
vanish by Lemma~\ref{lem:spectral-cancellation}. The radial--angular principal
coefficients vanish at $F$ because the graph metric, its shape operator,
and its Newton tensor are block diagonal in the radial--angular
splitting. The algebraic angular gradient terms contain no angular
derivative of $F$. Thus these are tensorial cancellations; no assumption
that $\nabla K=0$ is being made. Hence
the angular part of the linearization at \(F\) is precisely the
Laplacian associated with the radius-dependent effective metric.

After setting
\[
 s=\rho^\beta,
 \qquad
 A_0=a_R\beta^2,
 \qquad
 \eta=\frac{\gamma\beta}{A_0},
\]
the effective metric determined by the full principal symbol has the
expansion
\begin{equation}\label{eq:nr-general-metric-expansion}
 g
 =
 ds^2+s^2h_*+h_1+O_\infty(s^{-2}),
\end{equation}
up to mixed terms of relative order \(O_\infty(s^{-4})\), where
$h_1$ is independent of $s$. The angular connection cancellation
follows from \eqref{eq:nr-Q-laplacian}; one does not discard coordinate
first-order terms. Indeed, the \(h_1\)-term is produced
by the first correction of the radial approximation \(F\), while
\[
 \frac{f_t-F}{\rho}
 =
 O_\infty(\rho^{-2r-2})
 =
 O_\infty(s^{-4})
\]
shows that replacing \(F\) by \(f_t\) contributes only relative
\(O_\infty(s^{-4})\)-errors.

It follows that
\[
 |\nabla_gs|_g=1,
 \qquad
 |\nabla_g^2s^2-2g|_g=O(s^{-2}),
 \qquad
 \Delta_gs=\frac{n-1}{s}+O(s^{-3}).
\]
Exactly as in Lemma~\ref{lem:nr-normal-form}, the normalized equation
for \(w=f_1-f_2\) becomes
\[
 \Delta_gw-\eta s\partial_sw+\frac{\ell}{s}\partial_sw
 +\frac{\gamma}{A_0}w
 =
 E_1\cdot\nabla_gw+E_0w,
\]
where
\[
 |E_1|_g=O(s^{-3}),
 \qquad
 |E_0|=O(s^{-2}),
\]
and
\[
 \ell
 =
 \eta\zeta+
 \frac{\beta-1+n-r}{\beta}-(n-1)
 =
 \eta\zeta-\frac{n(r-1)}{r+1}.
\]

Finally, put
\[
 \mathcal R=\sqrt{2\eta}\,s,
 \qquad
 \widehat g=2\eta g,
 \qquad
 v=\mathcal R^{\ell/2}w.
\]
The conjugation by \(\mathcal R^{\ell/2}\) cancels the \(\ell/\mathcal R\)-drift and gives
\[
 \left|
 \left(
  \Delta_{\widehat g}
  -\frac{\mathcal R}{2}
   \langle\nabla_{\widehat g}\mathcal R,
          \nabla_{\widehat g}\,\cdot\,\rangle
  +\varpi_*
 \right)v
 \right|
 \le
 C\mathcal R^{-2}\left(
  |v|+|\nabla_{\widehat g}v|
 \right),
\]
with
\[
 \varpi_*=\frac1{r+1}+\frac \ell4.
\]
Furthermore,
\[
 |\nabla_{\widehat g}\mathcal R|_{\widehat g}=1,
 \qquad
 |\nabla_{\widehat g}^2\mathcal R^2-2\widehat g|_{\widehat g}
 \le C\mathcal R^{-2}.
\]
Thus \((E,\widehat g,\mathcal R)\) is a weakly conical end. By
Lemma~\ref{lem:nr-decay}, \(v\) and all its derivatives decay faster
than every power of \(\mathcal R\).

The proof of Theorem~\ref{thm:nr-circular} now applies verbatim: the
asymptotic trace of \(v\) given by Theorem~\ref{thm:bernstein} vanishes
because of the superalgebraic decay, and therefore \(v=0\) on a
sufficiently distant end. Hence
\(w=0\) there, and the two hypersurface ends coincide outside a
compact set.
\end{proof}

\begin{remark}
The proposition includes noncircular cones. For
$1\leq r\leq n-2$, consider
\[
 \Gamma=\Sph^{n-2}(a)\times\Sph^1(b)\subset\Sph^n,
 \qquad
 a^2+b^2=1,\qquad k=b/a.
\]
With a suitable orientation its principal curvatures are $k$,
with multiplicity $p=n-2$, and $-1/k$, with multiplicity one.
Hence
\[
 S_j(0,k,\ldots,k,-1/k)
 =
 \binom pj k^j-\binom p{j-1}k^{j-2}>0
 \qquad(1\leq j\leq r)
\]
whenever
\(
 k^2>\frac{r}{n-1-r}.
\)
Proposition~\ref{prop:nr-constant-curvatures} then gives uniqueness
of $S_r$-shrinking ends asymptotic to the cone over $\Gamma$, provided
such ends exist. For $r=1$ this is contained in Wang's theorem; for
$r\ge2$ the existence of such ends is an open problem.
\end{remark}

\begin{remark}\label{rem:nr-general-cones}
For an arbitrary regular cone, Lemma~\ref{lem:nr-decay} still
gives superalgebraic decay of the difference. The continuation argument above,
however, does not apply directly. Even if the cone is strictly
$r$-admissible, normalization of the radial principal coefficient
produces the drift
\[
 -\frac{\gamma}
 {\beta S_{r-1}(K(\theta))}
 \,s\partial_s,
\]
whose coefficient may depend on the angular variable.
This is a principal term, not an error allowed in
\eqref{eq:nr-bernstein-inequality}.

Further angular dependence may occur in the coefficient $\ell$ of
the $s^{-1}$ drift. Conjugation by a power with an angularly varying exponent
introduces logarithmic derivatives and does not supply the required
estimate. Finally, if the limiting cone lies on the boundary of
$\Ga_r$, positivity of the scaled principal symbol must itself
be addressed. Thus uniqueness for general regular cones remains
outside the scope of this argument.
\end{remark}

\section{Geometric consequences}\label{sec:geometry}

\begin{proposition}\label{prop:geometry}
For $R>0$, set $u_{\sigma,R}(s)=R^{-1}u_\sigma(Rs)$.  For every
$0<a<A<\infty$ and integer $k\ge0$,
\[
\|u_{\sigma,R}-\sigma(\cdot)\|_{C^k([a,A])}
=O(R^{-(r+1)}).
\]
Furthermore,
\[
\lim_{x\to\infty}x\lambda(x)
=\frac1{\sigma\sqrt{1+\sigma^2}},
\qquad
|A_{\Sigma_\sigma}(x)|=O(x^{-1}).
\]
\end{proposition}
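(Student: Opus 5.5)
The plan is to deduce everything from the complete asymptotic expansion of Theorem~\ref{thm:expansion-intro} by a scaling argument. Write $u_\sigma(x)=\sigma x+e(x)$. Taking $N=1$ in \eqref{eq:expansion-intro} gives, for every $k\ge0$,
\[
e^{(k)}(x)=O\bigl(x^{-r-k}\bigr)\qquad(x\to\infty);
\]
this also follows directly from $e=c_1x^{-r}+O(x^{-2r-1})$ with its derivative bounds. For the rescaled profile,
\[
u_{\sigma,R}(s)-\sigma s=R^{-1}e(Rs),
\]
and its $k$th derivative in $s$ is
\[
R^{k-1}e^{(k)}(Rs)=O\bigl(R^{k-1}(Rs)^{-r-k}\bigr)=O\bigl(R^{-(r+1)}s^{-r-k}\bigr).
\]
On $[a,A]$ with $a>0$ the factor $s^{-r-k}$ is bounded by $a^{-r-k}$. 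Summing over derivative orders up to $k$ then gives the $C^k([a,A])$ bound $O(R^{-(r+1)})$, uniformly once $Ra$ exceeds the threshold beyond which the expansion estimates hold. The only point needing care is that the implied constants in \eqref{eq:expansion-intro} are uniform on $[Ra,\infty)$ for large $R$, which is exactly what a big-$O$ as $x\to\infty$ provides.

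For the curvature statements, use \eqref{eq:curv-graph}: $\lambda=1/(u_\sigma W)$ with $W=\sqrt{1+(u_\sigma')^2}$. Theorem~\ref{thm:main} gives $u_\sigma(x)/x\to\sigma$ and $u_\sigma'\to\sigma$ (from \eqref{eq:main1}). Hence
\[
x\lambda(x)=\frac{x}{u_\sigma(x)}\cdot\frac1{\sqrt{1+(u_\sigma')^2}}\longrightarrow\frac1{\sigma\sqrt{1+\sigma^2}}.
\]
For the second fundamental form,
\[
|A_{\Sigma_\sigma}|^2=\mu^2+(n-1)\lambda^2 .
\]
The previous limit gives $\lambda=O(x^{-1})$. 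Also $\mu=-u_\sigma''/W^3$, and \eqref{eq:meridian-expansion} (or Proposition~\ref{prop:defect-leading}) gives $\mu=O(x^{-r-2})=o(x^{-1})$. Therefore $|A_{\Sigma_\sigma}|=O(x^{-1})$. Since $|X|^2=x^2+u_\sigma^2\asymp x^2$, this is the same as $O(|X|^{-1})$, as asserted in Theorem~\ref{thm:main}.

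There is no real obstacle here. The only substantive input, derivative control of the remainder at all orders, is already supplied by Theorem~\ref{thm:expansion-intro}; the rest is bookkeeping of the scaling powers of $R$.
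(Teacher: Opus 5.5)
Your proposal is correct and follows the paper's proof essentially verbatim. The rescaling contributes $R^{k-1}$ to the $k$th derivative, the differentiated expansion of Theorem~\ref{thm:expansion-intro} then gives $O(R^{-(r+1)})$ uniformly on $[a,A]$, and the limit of $x\lambda$ comes from $u_\sigma/x\to\sigma$ and $u_\sigma'\to\sigma$.

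The only deviation is cosmetic and concerns the bound on $\mu$. You use \eqref{eq:meridian-expansion}, which gives the sharper $O(x^{-r-2})$. The paper instead obtains $\mu=O(x^{-1})$ from $\mu=-\Phi_\sigma/\sqrt{1+(u_\sigma')^2}$ and the elementary estimates of Theorem~\ref{thm:main}. Note that Proposition~\ref{prop:defect-leading} gives only $\mu=o(x^{-1})$, not $O(x^{-r-2})$, but that still suffices.
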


\begin{proof}
For each $j\ge0$, differentiation of the rescaled profile contributes
$R^{j-1}$ to the $j$th derivative. The differentiated expansion in
Theorem~\ref{thm:expansion-intro}, uniformly for $s\in[a,A]$, therefore
gives the asserted $C^k$ estimate. In particular, the end is smoothly
asymptotic to its cone with multiplicity one. Since
\[
\lambda=\frac1{u_\sigma\sqrt{1+(u_\sigma')^2}},
\]
the asserted limit follows from $u_\sigma/x\to\sigma$ and
$u_\sigma'\to\sigma$.  Finally, set
\[
\Phi_\sigma=\frac{n-r}{ru_\sigma}
 +\kappa m_\sigma^{r-1}\Psi_\sigma.
\]
Theorem~\ref{thm:main} gives $\Phi_\sigma=O(x^{-1})$.  Since
\(
\mu=-\frac{\Phi_\sigma}{\sqrt{1+(u_\sigma')^2}},
\)
we obtain $\mu=O(x^{-1})$ and hence $|A|=O(x^{-1})$.
\end{proof}

The expansion also determines the first correction to the transversal
principal curvature.

\begin{proposition}\label{prop:transversal-sharp}
As $x\to\infty$,
\begin{equation}\label{eq:lambda-sharp}
\lambda(x)=\frac{1}{\sigma\sqrt{1+\sigma^2}}x^{-1}
-\frac{c_1\bigl(1+(1-r)\sigma^2\bigr)}
{\sigma^2(1+\sigma^2)^{3/2}}x^{-r-2}
+O(x^{-2r-3}).
\end{equation}
Consequently,
\[
|A_{\Sigma_\sigma}(x)|
=\frac{\sqrt{n-1}}{\sigma\sqrt{1+\sigma^2}}x^{-1}
+O(x^{-r-2}).
\]
\end{proposition}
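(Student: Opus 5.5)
The plan is a direct Taylor expansion of the explicit formula $\lambda=1/(u_\sigma W)$, with $W=\sqrt{1+(u_\sigma')^2}$, using Theorem~\ref{thm:expansion-intro} with $N=1$ and $k=0,1$. From the proof of that theorem we have
\[
u_\sigma=\sigma x+c_1x^{-r}+O(x^{-2r-1}),\qquad u_\sigma'=\sigma-rc_1x^{-r-1}+O(x^{-2r-2}).
\]

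\textbf{Expanding $\lambda$.} First factor $u_\sigma=\sigma x\bigl(1+\tfrac{c_1}{\sigma}x^{-r-1}+O(x^{-2r-2})\bigr)$. Next, write $(u_\sigma')^2=\sigma^2-2r\sigma c_1x^{-r-1}+O(x^{-2r-2})$, so that
\[
W=\sqrt{1+\sigma^2}\Bigl(1-\frac{r\sigma c_1}{1+\sigma^2}x^{-r-1}+O(x^{-2r-2})\Bigr).
\]
Multiplying and inverting gives
\[
\lambda=\frac{x^{-1}}{\sigma\sqrt{1+\sigma^2}}\Bigl(1-\Bigl(\frac{c_1}{\sigma}-\frac{r\sigma c_1}{1+\sigma^2}\Bigr)x^{-r-1}+O(x^{-2r-2})\Bigr).
\]
The bracketed coefficient combines to
\[
\frac{c_1\bigl(1+\sigma^2-r\sigma^2\bigr)}{\sigma(1+\sigma^2)}=\frac{c_1\bigl(1+(1-r)\sigma^2\bigr)}{\sigma(1+\sigma^2)}.
\]
Multiplying by the prefactor yields exactly the coefficient in \eqref{eq:lambda-sharp}, with remainder $x^{-1}\cdot O(x^{-2r-2})=O(x^{-2r-3})$.

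\textbf{Norm of the second fundamental form.} We have $|A|^2=\mu^2+(n-1)\lambda^2$. By \eqref{eq:meridian-expansion}, $\mu=O(x^{-r-2})$, so $\mu^2=O(x^{-2r-4})$. The first part gives $\lambda=\lambda_0x^{-1}+O(x^{-r-2})$ with $\lambda_0=1/(\sigma\sqrt{1+\sigma^2})$, hence
\[
(n-1)\lambda^2=(n-1)\lambda_0^2x^{-2}\bigl(1+O(x^{-r-1})\bigr).
\]
The $\mu^2$ term is relatively $O(x^{-2r-2})$, which is smaller. Taking square roots gives $|A|=\sqrt{n-1}\,\lambda_0x^{-1}\bigl(1+O(x^{-r-1})\bigr)$, which is the claim.

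There is no real obstacle here; the only step requiring care is the algebra that combines the two first-order corrections into the factor $1+(1-r)\sigma^2$, since the sign conventions of $\mu$ versus $\lambda$ play no role in it.
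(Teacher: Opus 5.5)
Your proposal is correct and takes essentially the same approach as the paper's proof. You expand $u_\sigma$, $u_\sigma'$ and $W$ using Theorem~\ref{thm:expansion-intro} with $N=1$, invert $u_\sigma W$, and then combine $|A|^2=(n-1)\lambda^2+\mu^2$ with $\mu=O(x^{-r-2})$ from \eqref{eq:meridian-expansion}; your explicit combination of the two first-order corrections into $c_1(1+(1-r)\sigma^2)/(\sigma(1+\sigma^2))$, which the paper leaves to the reader, checks out.
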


\begin{proof}
Theorem~\ref{thm:expansion-intro}, with derivatives through order two, gives
\[
u=\sigma x+c_1x^{-r}+O(x^{-2r-1}),\qquad
u'=\sigma-r c_1x^{-r-1}+O(x^{-2r-2}).
\]
Therefore
\[
\sqrt{1+(u')^2}=\sqrt{1+\sigma^2}
\left(1-\frac{r\sigma c_1}{1+\sigma^2}x^{-r-1}
+O(x^{-2r-2})\right),
\]
and expansion of $\lambda=(u\sqrt{1+(u')^2})^{-1}$ proves
\eqref{eq:lambda-sharp}.  Finally,
$|A|^2=(n-1)\lambda^2+\mu^2$ and
$\mu=O(x^{-r-2})$ by \eqref{eq:meridian-expansion}; taking the positive
square root gives the last assertion.
\end{proof}

\begin{proposition}
Let $\omega_{n-1}=|\Sph^{n-1}|$.  Then
\[
\operatorname{Vol}_n(\Sigma_\sigma\cap\{b\le x\le R\})
=\frac{\omega_{n-1}}n\sigma^{n-1}\sqrt{1+\sigma^2}\,R^n+o(R^n).
\]
Moreover, for every $b>0$,
\[
\int_{\Sigma_\sigma\cap\{x\ge b\}}|A|^n\dd\mu=\infty.
\]
More precisely,
\[
\int_{\Sigma_\sigma\cap\{b\le x\le R\}}|A|^n\dd\mu
=\frac{\omega_{n-1}(n-1)^{n/2}}
{\sigma(1+\sigma^2)^{(n-1)/2}}\log R+O(1).
\]
\end{proposition}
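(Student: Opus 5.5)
We will compute both integrals in the rotational parametrization $X(x,\omega)=(x,u_\sigma(x)\omega)$, where the area element is $d\mu=u_\sigma^{n-1}\sqrt{1+(u_\sigma')^2}\,\dd x\,\dd\omega$. Hence
\[
\operatorname{Vol}_n(\Sigma_\sigma\cap\{b\le x\le R\})
=\omega_{n-1}\int_b^R u_\sigma^{n-1}\sqrt{1+(u_\sigma')^2}\,\dd x .
\]
By \eqref{eq:main0}--\eqref{eq:main1}, the integrand equals $\sigma^{n-1}\sqrt{1+\sigma^2}\,x^{n-1}(1+O(x^{-r-1}))$. Integrating gives
\[
\frac{\omega_{n-1}}{n}\sigma^{n-1}\sqrt{1+\sigma^2}\,R^n+O\bigl(R^{n-r-1}+\log R+1\bigr),
\]
and this is $o(R^n)$ because $r\ge1$. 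The contribution near $x=b$ is a fixed constant, since $u_\sigma$ is smooth and positive on $[b,R]$.

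For the curvature integral we use Proposition~\ref{prop:transversal-sharp}, which gives
\[
|A|^n=\frac{(n-1)^{n/2}}{\sigma^n(1+\sigma^2)^{n/2}}x^{-n}\bigl(1+O(x^{-r-1})\bigr).
\]
We multiply this by the area density $\sigma^{n-1}\sqrt{1+\sigma^2}\,x^{n-1}(1+O(x^{-r-1}))$. The powers of $x$ combine to $x^{-1}$, and the constants simplify to
\[
\frac{(n-1)^{n/2}}{\sigma(1+\sigma^2)^{(n-1)/2}}.
\]
So the integrand in $x$, after integrating over $\omega$, is
\[
\frac{\omega_{n-1}(n-1)^{n/2}}{\sigma(1+\sigma^2)^{(n-1)/2}}\,x^{-1}+O(x^{-r-2}).
\]
The error term is integrable on $[b,\infty)$, so integrating from $b$ to $R$ yields the stated logarithmic asymptotics with an $O(1)$ remainder. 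The divergence of the integral over $\{x\ge b\}$ follows at once. The $O(1)$ also absorbs the integral over any fixed compact range $[b,x_0]$ on which the asymptotic expansions are not yet effective, because the integrand is continuous there.

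The main point requiring care is making the error terms uniform. The asymptotics of Proposition~\ref{prop:transversal-sharp} hold only for $x\ge x_0$ large. We therefore split $[b,R]=[b,x_0]\cup[x_0,R]$: on $[b,x_0]$ we use continuity, and on $[x_0,R]$ we use the expansion.

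We also note that raising $1+O(x^{-r-1})$ to the power $n$, and to the power $n-1$, keeps it of the form $1+O(x^{-r-1})$. This holds because the quantity is bounded for large $x$.
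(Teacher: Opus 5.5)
Your proposal is correct and follows essentially the paper's proof. Both arguments use the area element $u_\sigma^{n-1}\sqrt{1+(u_\sigma')^2}\,\dd x\,\dd\omega$ and obtain the curvature density $\frac{(n-1)^{n/2}}{\sigma(1+\sigma^2)^{(n-1)/2}}x^{-1}+O(x^{-r-2})$ from Proposition~\ref{prop:transversal-sharp}, then integrate the integrable remainder. The only cosmetic difference is in the volume estimate: you invoke the quantitative bounds \eqref{eq:main0}--\eqref{eq:main1} to get the explicit remainder $O(R^{n-r-1}+\log R+1)$, whereas the paper uses only $u_\sigma/x\to\sigma$ and $u_\sigma'\to\sigma$.
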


\begin{proof}
The area form is
\[
\dd\mu=u_\sigma^{n-1}\sqrt{1+(u_\sigma')^2}\dd x\dd\omega.
\]
The volume asymptotics follow from $u_\sigma/x\to\sigma$ and
$u_\sigma'\to\sigma$. By Proposition~\ref{prop:transversal-sharp}
and Theorem~\ref{thm:expansion-intro}, the curvature density satisfies
\[
|A|^n u_\sigma^{n-1}\sqrt{1+(u_\sigma')^2}
=\frac{(n-1)^{n/2}}{\sigma(1+\sigma^2)^{(n-1)/2}}x^{-1}
+O(x^{-r-2}).
\]
The remainder is integrable at infinity. Integration in $x$ and over
$\Sph^{n-1}$ proves the logarithmic asymptotic formula and the divergence.
\end{proof}

We finally record what can be said about the profile away from
infinity. Since the right-hand side of \eqref{eq:profile} is real
analytic on $\{u>0\}$ for every $x\in\R$, the point $x=0$ plays no
special role for the equation.

\begin{proposition}\label{prop:maximal-graph}
The profile $u_\sigma$ extends uniquely to a maximal positive solution of
\eqref{eq:profile} on an interval $(x_\sigma,\infty)$ with
$x_\sigma\in[-\infty,0]$. If $x_\sigma>-\infty$, then
$(u_\sigma(x),u_\sigma'(x))$ leaves every compact subset of
$(0,\infty)\times\R$ as $x\downarrow x_\sigma$. Moreover:
\begin{enumerate}
\item if $x_\sigma<0$, then $u_\sigma(0)>0$, $\Psi_\sigma<0$ on
$[0,\infty)$, and $\Sigma_\sigma\cap\{x\ge0\}$ is $r$-admissible;
\item if $x_\sigma=0$, then the limit
$L_\sigma=\lim_{x\downarrow0}u_\sigma(x)/x$ exists in $(\sigma,\infty]$;
if $L_\sigma<\infty$, then $u_\sigma(x)\to0$ and
$u_\sigma(\rho s)/\rho\to L_\sigma s$ locally uniformly for $s>0$ as
$\rho\downarrow0$.
\end{enumerate}
\end{proposition}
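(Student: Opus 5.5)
The plan rests on three ingredients: analytic continuation of the profile ODE, the linear equation for the angular defect, and monotonicity of $u/x$. First, since the right-hand side $F(x,u,p)$ of \eqref{eq:profile} is real analytic on $\R\times(0,\infty)\times\R$, the profile $u_\sigma$ of Theorem~\ref{thm:main} extends uniquely to a maximal solution on some interval $(x_\sigma,\infty)$. We have $x_\sigma\le0$ because $u_\sigma$ is already defined and positive on $(0,\infty)$. The escape statement is the standard continuation theorem for the first-order system $u'=p$, $p'=F(x,u,p)$ on the open set $\{u>0\}$: if $x_\sigma>-\infty$ and $(u_\sigma,u_\sigma')$ returned infinitely often to a compact subset $K\subset(0,\infty)\times\R$ as $x\downarrow x_\sigma$, then, because $F$ is bounded and Lipschitz near $[x_\sigma-1,x_\sigma+1]\times K$, there is a uniform existence time, and the solution would extend past $x_\sigma$. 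Uniqueness of the extension follows from uniqueness for the initial value problem.

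For (i), suppose $x_\sigma<0$. Then $0$ is an interior point of the interval, so $u_\sigma(0)>0$. To get $\Psi_\sigma<0$ on $[0,\infty)$, I would use \eqref{eq:defect-linear-unique}, $\Psi'=\mathcal A\Psi+b$, where $b=a_0x(1+p^2)/u$. On $(0,\infty)$ we already know $\Psi_\sigma<0$ from \eqref{eq:PsiJ}. At $x=0$ itself, $\Psi_\sigma(0)=-u_\sigma(0)<0$ directly. Proposition~\ref{prop:admissibility} then gives admissibility, since $\lambda=1/(uW)>0$ wherever $u>0$.

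For (ii), suppose $x_\sigma=0$. On $(0,\infty)$ we have $(u_\sigma/x)'=\Psi_\sigma/x^2<0$, so $u_\sigma/x$ is strictly decreasing. Its limit $L_\sigma$ as $x\downarrow0$ therefore exists in $(\sigma,\infty]$; it is strictly larger than $\sigma$ because $u_\sigma/x>\sigma$ everywhere by Theorem~\ref{thm:main}. If $L_\sigma<\infty$, then $u_\sigma(x)\le L_\sigma x\to0$. For the blow-down, $u_\sigma(\rho s)/\rho=s\cdot u_\sigma(\rho s)/(\rho s)$, and $u_\sigma(\rho s)/(\rho s)\to L_\sigma$. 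This convergence is uniform for $s$ in compact subsets of $(0,\infty)$, by monotonicity of $u_\sigma/x$ (a Dini-type argument). Hence $u_\sigma(\rho s)/\rho\to L_\sigma s$ locally uniformly.

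The only point needing care is the continuation statement in the case where $u_\sigma'$ could blow up while $u_\sigma$ stays bounded. I expect this to be handled entirely by the standard ODE continuation criterion on the open set $\{u>0\}$, so no real obstacle arises. The statement makes no finer claim about which of the two alternatives occurs.
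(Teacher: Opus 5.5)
Your proposal is correct and follows the paper's proof essentially step by step. The paper also uses the standard continuation theorem for $u'=p$, $p'=F(x,u,p)$ on $\R\times(0,\infty)\times\R$; for (i) it combines $\Psi_\sigma(0)=-u_\sigma(0)<0$ with \eqref{eq:PsiJ} and Proposition~\ref{prop:admissibility}; and for (ii) it uses the monotonicity of $u_\sigma/x$.

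The differences are cosmetic. Your appeal to \eqref{eq:defect-linear-unique} is never actually used. The local uniform convergence also needs no Dini-type argument, since $\sup_{s\in[a,A]}|u_\sigma(\rho s)/\rho-L_\sigma s|\le A\sup_{0<y\le\rho A}|u_\sigma(y)/y-L_\sigma|\to0$ directly.
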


\begin{proof}
The existence of a maximal solution and the behavior at a finite endpoint
are the standard continuation theorem for the first-order system
$u'=p$, $p'=F(x,u,p)$, whose right-hand side is smooth on
$\R\times(0,\infty)\times\R$. Since $u_\sigma$ is defined on
$(0,\infty)$, we have $x_\sigma\le0$.

(i) If $x_\sigma<0$, then $u_\sigma(0)>0$ and
$\Psi_\sigma(0)=-u_\sigma(0)<0$, while $\Psi_\sigma<0$ on $(0,\infty)$
by \eqref{eq:PsiJ}. Since $\lambda>0$ on a graph,
Proposition~\ref{prop:admissibility} gives admissibility.

(ii) By \eqref{eq:PsiJ}, $(u_\sigma/x)'=\Psi_\sigma/x^2<0$ on
$(0,\infty)$, so $u_\sigma/x$ has a limit in $(\sigma,\infty]$ as
$x\downarrow0$. If $L_\sigma<\infty$, then
$u_\sigma(x)=x(u_\sigma(x)/x)\to0$, and for every compact interval
$[a,A]\Subset(0,\infty)$,
\[
\sup_{s\in[a,A]}
\left|\frac{u_\sigma(\rho s)}\rho-L_\sigma s\right|
\le A\sup_{y\in[\rho a,\rho A]}
\left|\frac{u_\sigma(y)}y-L_\sigma\right|\longrightarrow0.
\qedhere
\]
\end{proof}

\begin{proposition}[Obstruction at a vertical tangent]
\label{prop:vertical-obstruction}
Let $2\le r\le n-1$ and $c\ne0$. On a $C^2$ regular rotational
immersion satisfying $S_r=c\langle X,N\rangle$, a vertical meridian
tangent at positive radius can occur only in the plane $x=0$.
More precisely, suppose that a graphical solution approaches a finite
point $(x_*,y_*)$ with $x_*\ne0$, $y_*>0$, and $|u'|\to\infty$.
Then it has no $C^2$ regular continuation as a solution of the same
equation through that point. With the graphical orientation,
\begin{equation}\label{eq:vertical-curvature-blowup}
 \lambda\to0,\qquad
 \mu\lambda^{r-1}\longrightarrow
 \frac{c\,x_*\epsilon}{\binom{n-1}{r-1}}\ne0,
 \qquad
 \epsilon=\lim\frac{u'}{\sqrt{1+(u')^2}}\in\{-1,1\}.
\end{equation}
In particular, the meridian curvature diverges in absolute value.
\end{proposition}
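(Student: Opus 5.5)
The plan is to work with the exact rotational form of the equation rather than the graphical profile equation, because the graph parametrization degenerates at a vertical tangent. For a general regular meridian $t\mapsto(x(t),y(t))$ with unit normal $N=W^{-1}(y',-x'\omega)$, formula \eqref{eq:Sr} and the expression for $\ip{X}{N}$ give
\[
\binom{n-1}{r-1}\mu\lambda^{r-1}+\binom{n-1}{r}\lambda^r=c\,\frac{xy'-yx'}{W},\qquad \lambda=\frac{x'}{yW}.
\]
I will parametrize by arclength, so $W=1$, $\lambda=x'/y$ and $\ip{X}{N}=xy'-yx'$. At a point where the tangent is vertical we have $x'=0$ and $|y'|=1$. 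Hence $\lambda=0$ there, and since $r\ge2$ both curvature terms on the left vanish, because $\mu$ is finite for a $C^2$ curve. The right-hand side, however, equals $c\,x_*y'=\pm c\,x_*$, which is nonzero unless $x_*=0$. This contradiction shows that a $C^2$ regular solution can have a vertical tangent at positive radius only in the plane $x=0$. The hypothesis $r\ge2$ is essential here: for $r=1$ the term $\mu$ survives.

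For the graphical statement I pass to the limit along the graph. On a graph, $\lambda=1/(uW)\to0$ as $|u'|\to\infty$ with $u\to y_*>0$. In the orientation $N=(u',-\omega)/W$ we have $\ip{X}{N}=(xu'-u)/W\to x_*\epsilon$, where $\epsilon=\lim u'/W$. This limit exists because $|u'|\to\infty$ and $u'$ is continuous, so eventually $u'$ has a fixed sign. Dividing the equation by $\binom{n-1}{r-1}$ gives
\[
\mu\lambda^{r-1}=\frac{c\ip{X}{N}}{\binom{n-1}{r-1}}-\frac{n-r}{r}\lambda^r\longrightarrow\frac{c\,x_*\epsilon}{\binom{n-1}{r-1}},
\]
which is \eqref{eq:vertical-curvature-blowup}. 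Since this limit is nonzero while $\lambda^{r-1}\to0$, it follows that $|\mu|\to\infty$.

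A $C^2$ regular continuation through $(x_*,y_*)$ would make $\mu$ continuous and hence bounded near the point. That contradicts the blow-up, and it is also directly incompatible with the pointwise argument of the first paragraph applied at the limit point. Orientation causes no difficulty: a continuation carries a continuous normal that agrees with the graphical one up to sign, and the conclusion is insensitive to that sign.

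The only delicate step is relating the curvature of the graph to that of a regular reparametrization near the vertical point. I would handle this by noting that $\mu$ and $\lambda$ are geometric invariants, so the arclength formulas apply to the continuation and the graphical limits coincide with its values at the point.
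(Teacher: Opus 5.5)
Your proposal is correct and follows the paper's proof essentially step for step. The steps are the same: the arclength computation at the vertical point ($\lambda=0$ and $\langle X,N\rangle=\pm x_*$, so $S_r=0$ for $r\ge2$ forces $x_*=0$); the graphical limit of $\mu\lambda^{r-1}$ via the eventual constant sign of $u'$; and the boundedness of $\mu$ on a $C^2$ regular continuation. Your remark that a reversal of orientation on the continuation only changes signs and does not affect $|\mu|\to\infty$ is a detail the paper leaves implicit.
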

\begin{proof}
Parametrize a regular meridian by arclength $\tau$, so that
$\dot x^2+\dot y^2=1$, and choose $N=(\dot y,-\dot x\omega)$.
At a vertical tangent, $\dot x=0$ and $\dot y=\pm1$. Hence
$\lambda=\dot x/y=0$ and $\langle X,N\rangle=\pm x$.
The meridian curvature $\mu$ is finite on a $C^2$ regular immersion.
Since
\[
 S_r=\binom{n-1}{r-1}\mu\lambda^{r-1}
     +\binom{n-1}{r}\lambda^r,
\]
we have $S_r=0$ for $r\ge2$, and the soliton equation forces $x=0$.

For the graphical limit in the statement,
$\lambda=(u\sqrt{1+(u')^2})^{-1}\to0$.
Continuity of $u'$ and $|u'|\to\infty$ imply that its sign is eventually
constant, so $u'/\sqrt{1+(u')^2}\to\epsilon$ and
\[
 \langle X,N\rangle
 =\frac{xu'-u}{\sqrt{1+(u')^2}}\longrightarrow x_*\epsilon.
\]
The formula for $S_r$ gives \eqref{eq:vertical-curvature-blowup}.
Thus $|\mu|$ grows like a nonzero constant times
$\lambda^{1-r}$, excluding a $C^2$ regular continuation.
\end{proof}

\begin{remark}\label{rem:numerical-maximal}
(Numerical.) In the computed cases the profiles cross $x=0$ at positive
height and become steep near a negative left endpoint $x_\sigma$, at a
positive radius; for $r\ge2$, Proposition~\ref{prop:vertical-obstruction}
then excludes a $C^2$ regular continuation. The angular defect can become
positive before $x_\sigma$ is reached, so $r$-admissibility may be lost
within the graphical region. Approximate values:
\begin{center}
\begin{tabular}{cccccc}
$n$ & $r$ & $\sigma$ & $u_\sigma(0)$ & $x_\sigma$ & $\{x>x_\sigma:\Psi_\sigma>0\}$\\
\hline
3 & 1 & 1 & 1.187 & $-1.68$ & $(-1.68,-1.43)$\\
3 & 2 & 1 & 0.456 & $-1.37$ & $(-1.37,-0.93)$\\
4 & 2 & 0.5 & 0.937 & $-2.48$ & $\emptyset$\\
5 & 3 & 2 & 0.333 & $-0.81$ & $(-0.81,-0.49)$
\end{tabular}
\end{center}
Here $\kappa=1$, and the computations are as in
Figure~\ref{fig:rotational-profiles}.
\end{remark}

For each $b\in\R$ with $b>x_\sigma$, the tail
$\Sigma_\sigma\cap\{x\ge b\}$ is proper and complete in the direction
$x\to\infty$: properness follows from $|X|\ge x$, and every curve escaping
through $x\to\infty$ has infinite length. Completeness of the whole hypersurface is not established by these
estimates. A possible continuation must solve the same geometric
equation; changing parametrization does not remove the obstruction of
Proposition~\ref{prop:vertical-obstruction}. For $r=1$, after
the normalization $\kappa=1/2$ of Kleene--M\o ller, the profile is defined
on $[0,\infty)$, it is strictly convex there with $0<u_\sigma'<\sigma$, and
$u_\sigma(0)<\sqrt{2(n-1)}$ \cite[Theorem~3(i),(iv), p.~3947]{KleeneMoller2014};
in particular $x_\sigma<0$. Moreover, the maximal curve containing the
graph of $u_\sigma$ is not embedded, so these ends are not parts of
complete embedded self-shrinkers
\cite[Theorem~3(v) and Corollary~5, pp.~3947 and 3956]{KleeneMoller2014}.

\section{Conclusions and open problems}\label{sec:finer}

For every $1\le r\le n-1$ and every positive conical slope we have
constructed a smooth embedded rotational $S_r$-shrinking end. The strict
negativity of the angular defect gives $r$-admissibility, the profile is
unique under the sole assumption $u(x)/x\to\sigma>0$, and its complete
differentiable asymptotic expansion determines the geometry at infinity.
Proposition~\ref{prop:gauss-endpoint} shows that for $r=n$ the only
rotational graphical solution with a positive conical asymptote is the
cone itself. On compact slope intervals the profiles depend real
analytically on the slope, in weighted H\"older spaces controlling the
full conical remainder, and they foliate distant exterior regions.
Theorem~\ref{thm:nr-circular} removes the symmetry assumption from the
uniqueness statement under smooth multiplicity-one convergence to a
circular cone, and Proposition~\ref{prop:nr-constant-curvatures} extends
it to strictly admissible cones whose links have constant principal
curvatures. These results identify the ends outside a compact set, but
they do not classify their interior continuations. We conclude with
several questions.

\subsection*{Intersections of the rotational family}
Proposition~\ref{prop:slope-dependence} and
Corollary~\ref{cor:analytic-exterior-foliation} show that the profiles are
ordered on distant exterior regions. Numerically, this ordering does not
extend to all $x>0$.

\begin{remark}\label{rem:numerical-crossing}
(Numerical.) For
$\kappa=1$, $(n,r)\in\{(3,1),(3,2),(4,1),(5,2),(5,3),(6,4)\}$, and
consecutive slopes in $\{0.25,0.5,1,2,4\}$, the profiles $u_\sigma$ and
$u_{\sigma'}$ with $\sigma<\sigma'$ have exactly one detected
intersection in $\{x>0\}$. For instance, for $n=3$ the intersection points are approximately
$x=0.47$ ($r=1$) and $x=0.47$ ($r=2$) when $(\sigma,\sigma')=(0.5,1)$, and
$x=0.19$ ($r=1$) and $x=0.17$ ($r=2$) when $(\sigma,\sigma')=(1,2)$. In
all these cases $x_\sigma<0$, and the computed values of
$u_\sigma(0)$ decrease along the sampled slopes.
\end{remark}

If one proves that $x_\sigma<0$ and that $\sigma\mapsto u_\sigma(0)$ is
strictly decreasing, then Proposition~\ref{prop:slope-dependence} and the
intermediate value theorem show that $u_\sigma$ and $u_{\sigma'}$
intersect in $(0,R(\sigma,\sigma'))$. In terms of the variation
$J_\sigma=\partial_\sigma u_\sigma$, which satisfies the linearized
equation
\[
 J_\sigma''=F_u(x,u_\sigma,u_\sigma')J_\sigma
            +F_p(x,u_\sigma,u_\sigma')J_\sigma'
\]
and $J_\sigma=x+O(x^{-r})$ at infinity
(Corollary~\ref{cor:analytic-exterior-foliation}), global ordering of the
family is equivalent to $J_\sigma>0$ on $(0,\infty)$ for every $\sigma$.
Indeed, ordering gives $J_\sigma\ge0$; at an interior zero $x_0$ of a
nonnegative $J_\sigma$ we would have $J_\sigma'(x_0)=0$, and uniqueness
for the linear equation would force $J_\sigma\equiv0$, which is
impossible. Conversely, integrating $J_\sigma>0$ in $\sigma$ gives
ordering. More precisely, an intersection forces a zero of the
variation: if $u_\sigma(x_0)=u_{\sigma'}(x_0)$ for some $x_0>0$ and
$\sigma<\sigma'$, then
\[
 0=\int_\sigma^{\sigma'}J_\tau(x_0)\,d\tau ,
\]
so $J_\tau(x_0)\le0$ for some $\tau\in(\sigma,\sigma')$; since
$J_\tau(x)=x+O(x^{-r})>0$ for large $x$, the function $J_\tau$ has a zero
in $(0,\infty)$. Thus each intersection observed in
Remark~\ref{rem:numerical-crossing} yields a zero of $J_\tau$ for some
intermediate slope, and $J_\tau>0$ fails for that $\tau$.

\begin{question}\label{ques:global-ordering}
Do the profiles $u_\sigma$ and $u_{\sigma'}$, $\sigma\ne\sigma'$, always
intersect in $\{x>0\}$, and, if an intersection exists, is it unique?
\end{question}

\begin{question}\label{ques:variation-zero}
For each $\sigma>0$, does the variation
$J_\sigma=\partial_\sigma u_\sigma$ have exactly one zero in
$(0,\infty)$?
\end{question}

The two questions concern finite differences and infinitesimal
variations, respectively. We do not assert their equivalence.
Such an equivalence would require a separate comparison argument for
the profile equation, including control of zeros approaching the
boundary of the graphical interval.

\subsection*{The maximal graphical interval}
The construction concerns the conical end, and
Proposition~\ref{prop:maximal-graph} only describes the maximal graphical
interval qualitatively.

\begin{question}\label{ques:finite-end}
Let $2\le r\le n-1$. Is $x_\sigma<0$ for every $\sigma>0$?
When $x_\sigma$ is finite, does $u_\sigma$ approach a positive finite
radius with $|u_\sigma'|\to\infty$, and what is the precise rate of
curvature blow-up? At which points, if any, does the generating curve
leave the closure of the $r$-admissible region? What other endpoint
behaviors are possible, including approach to the axis or a second end?
If a weak or singular continuation is considered, which notion of
solution is appropriate and under what conditions does it exist?
\end{question}

If the endpoint has nonzero $x_\sigma$ and positive finite radius with
a vertical limiting tangent, a smooth continuation is ruled out by
Proposition~\ref{prop:vertical-obstruction}; reparametrization by
arclength cannot regularize the meridian curvature. Endpoints with
$x_\sigma=0$ or vanishing radius require separate analysis and are not
classified by that proposition.

For $r=1$, the classical results of Kleene--M\o ller give
$x_\sigma<0$ and a nonembedded maximal generating curve
\cite[Theorem~3, p.~3947, and Corollary~5, p.~3956]{KleeneMoller2014};
their proof uses geodesics of the Angenent metric. The obstruction at a
positive-radius vertical tangent is different for $r\ge2$, since the
coefficient of the meridian curvature vanishes when $\lambda=0$.

\subsection*{General cones}

\begin{question}\label{ques:general-cones}
For $2\le r\le n-1$, does uniqueness at infinity hold for smoothly
asymptotically conical $S_r$-shrinking ends over every strictly
$r$-admissible regular cone, without assuming that the principal
curvatures of its link are constant?
\end{question}

Lemma~\ref{lem:nr-decay} shows that the difference of two such ends
vanishes to every algebraic order for an arbitrary regular cone. The
remaining obstruction is therefore not asymptotic flatness but unique
continuation at infinity for the effective operator. As explained in
Remark~\ref{rem:nr-general-cones}, normalization of its radial principal
coefficient generally produces an angularly dependent leading drift,
which is not covered by the weakly conical almost-eigenfunction theory of
Theorem~\ref{thm:bernstein}. If the limiting cone lies on the
boundary of the G{\aa}rding cone, one must in addition address the
degeneration of the principal symbol.

\section*{Funding}
The authors were partially supported by the Brazilian National Council
for Scientific and Technological Development (CNPq), through Grant
303118/2022-9 to H.A., Grant 304381/2026-8 to M.B., and Grant
306189/2023-2 to G.S.N. All authors were also partially supported by the
Coordination for the Improvement of Higher Education Personnel (CAPES),
Brazil, Finance Code 001.

\section*{Data availability statement}
No data are associated with this manuscript. Figure~\ref{fig:rotational-profiles}
and Remarks~\ref{rem:numerical-maximal} and~\ref{rem:numerical-crossing}
are based on numerical solutions of the terminal problems described in the
caption of Figure~\ref{fig:rotational-profiles}; they are not certified
and no proof relies on them.

\section*{Conflict of interest statement}
The authors declare that they have no conflict of interest.

\section*{AI disclosure}
A generative AI tool was used to assist with language editing, organization of the exposition, and discussion and checking of selected algebraic calculations. The authors take full responsibility for the mathematical statements, proofs, and references.

\end{document}